\providecommand\given{} \newcommand\SetSymbol[1][]{
   \mathrel{}\mathclose{}#1|\allowbreak\mathopen{}\mathrel{}}
\DeclarePairedDelimiterX\Set[1]{\lbrace}{\rbrace}{ \renewcommand\given{\SetSymbol[\delimsize]} #1 }
\DeclarePairedDelimiter\abs{\lvert}{\rvert}\DeclarePairedDelimiter\norm{\lVert}{\rVert}
\let\oldabs\abs
\def\abs{\@ifstar{\oldabs}{\oldabs*}}
\let\oldnorm\norm
\def\norm{\@ifstar{\oldnorm}{\oldnorm*}}
\let\oldSet\Set
\def\Set{\@ifstar{\oldSet}{\oldSet*}}
\newcommand{\MoveEqLeftR}{\MoveEqLeft[3.3]}
\declaretheorem[name=Theorem, numberlike=equation]{theorem}
\declaretheorem[name=Proposition, numberlike=equation]{prop}
\declaretheorem[name=Definition, numberlike=equation, style=definition]{defn}
\declaretheorem[name=Lemma, numberlike=equation, style=definition]{lemma}
\declaretheorem[name=Corollary, numberlike=equation, style=definition]{cor}
\declaretheorem[name=Remark, numberlike=equation, style=remark]{rem}
\declaretheorem[name=Example, numberlike=equation, style=remark]{ex}
\tikzset{
	pf/.style={commutative diagrams/.cd, every arrow, every label},
	surj/.style=commutative diagrams/two heads,
	inj/.style=commutative diagrams/hook,
	gl/.style=commutative diagrams/equal,
	mat/.style={matrix of math nodes, commutative diagrams/.cd, every cell},
	dr/.style={matrix of math nodes, commutative diagrams/.cd, every cell, column sep=small},
	seq/.style={matrix of math nodes, commutative diagrams/.cd, every cell, column sep=small}
	}
\newenvironment{diag*}{\[\begin{tikzpicture}[commutative diagrams/.cd, every diagram, baseline=(current bounding box.center)]}{\end{tikzpicture}\]\ignorespacesafterend}
\newenvironment{diag}{\begin{equation}\begin{tikzpicture}[commutative diagrams/.cd, every diagram, baseline=(current bounding box.center)]}{\end{tikzpicture}\end{equation}\ignorespacesafterend}
\author{\texorpdfstring{Enno Keßler \and Artan Sheshmani \and Shing-Tung Yau}{
Enno Keßler, Artan Sheshmani, Shing-Tung Yau
}
}
\title{Super \texorpdfstring{\(\targetACI\)}{J}-holomorphic Curves: Construction of the Moduli Space}
\DeclareRobustCommand{\tensor}{\stensor}
\newcommand{\Dirac}{\slashed{D}}
\DeclareMathOperator{\ACI}{I}
\DeclareMathOperator{\Div}{div}
\DeclareMathOperator{\End}{End}
\DeclareMathOperator{\Hom}{Hom}
\DeclareMathOperator{\im}{im}
\DeclareMathOperator{\id}{id}
\DeclareMathOperator{\ind}{ind}
\DeclareMathOperator{\Tr}{Tr}
\DeclareMathOperator{\UGL}{U}
\newcommand{\ic}{\mathrm{i}}
\newcommand{\dual}[1]{{#1}^{\vee}}
\newcommand{\cat}[1]{\mathsf{#1}}
\newcommand{\Red}[1]{{#1}_{red}}
\newcommand{\Smooth}[1]{{|#1|}}
\renewcommand{\d}{\mathop{}\!d}
\newcommand{\p}[1]{{p(#1)}}
\newcommand{\VSec}[1]{\Gamma\left(#1\right)}
\newcommand{\tangent}[2][]{T_{#1}#2} \newcommand{\differential}[1]{\d{#1}} \newcommand{\cotangent}[1]{\dual{T}#1}
\newcommand{\NaturalNumbers}{\mathbb{N}}
\newcommand{\Integers}{\mathbb{Z}}
\newcommand{\Z}{\Integers}
\newcommand{\RealNumbers}{\mathbb{R}}
\newcommand{\R}{\RealNumbers}
\newcommand{\ComplexNumbers}{\mathbb{C}}
\newcommand{\C}{\ComplexNumbers}
\newcommand{\ProjectiveSpace}[2][]{\mathbb{P}_{#1}^{#2}}
\newcommand{\cD}{\mathcal{D}}
\newcommand{\cO}{\mathcal{O}}
\newcommand{\SuperUpperHalfPlane}{S\mathbb{H}}
\newcommand{\targetACI}{J}
\newcommand{\DJBar}{\overline{D}_\targetACI}
\newcommand{\DelJBar}{\overline{\partial}_\targetACI}
\newcommand{\Fieldspace}{\mathcal{H}}
\newcommand{\Targetbundle}{\mathcal{E}}
\DeclareMathOperator{\DLaplace}{\Delta^\cD}
\DeclareMathOperator{\DBarLaplace}{\bar{\Delta}^\cD}
\DeclareMathOperator{\nablabar}{\overline{\nabla}}
\begin{document}
\maketitle
\begin{abstract}
	Let \(M\) be a super Riemann surface with holomorphic distribution~\(\cD\) and \(N\) a symplectic manifold with compatible almost complex structure~\(\targetACI\).
	We call a map \(\Phi\colon M\to N\) a super \(\targetACI\)-holomorphic curve if its differential maps the almost complex structure on \(\cD\) to \(\targetACI\).
	Such a super \(\targetACI\)-holomorphic curve is a critical point for the superconformal action and satisfies a super differential equation of first order.
	Using component fields of this super differential equation and a transversality argument we construct the moduli space of super \(\targetACI\)-holomorphic curves as a smooth subsupermanifold of the space of maps \(M\to N\).
\end{abstract}

\counterwithin{equation}{section}
\section{Introduction}
In this article we introduce super \(\targetACI\)-holomorphic curves, a generalization of \(\targetACI\)-holomorphic curves to super Riemann surfaces.
\(\targetACI\)-holomorphic curves have been extensively studied in the past forty years in mathematics and theoretical physics.
Interest sparked when Mikhail Gromov~\cite{G-PHCSM} observed that \(\targetACI\)-holomorphic curves can be used to define invariants of symplectic manifolds.
Around the same time, it was discovered that the count of maps from Riemann surfaces to symplectic manifold expresses certain features of topological superstring theory, see~\cite{W-TDGISTMC}.
Subsequently the theory of \(\targetACI\)-holomorphic curves was systematically developed by several authors, leading to the moduli space of stable maps and Gromov--Witten invariants, see, for example,~\cites{CDGP-PCYMESST}{RT-MTQC}{K-ERCVTA}{BM-SSMGMI}.
For an overview, we refer to the textbooks~\cites{HKKTPVVZ-MS}{McDS-JHCST}.

Supergeometry is the mathematical theory to combine geometry with anti-commuting variables as needed for supersymmetry; a foundational article is~\cite{L-ITS}.
Super Riemann surfaces are supergeometric generalizations of Riemann surfaces playing an important role in superstring theory, see~\cites{F-NSTTDCFT}{dHP-GSP}.
The moduli space of super Riemann surfaces has been studied in~\cites{D-LaM}{LBR-MSRS}{CR-SRSUTT}{DW-SMNP} and more recently by~\cites{FKP-RSSSMSPM}{BR-SMSCRP}.

Here, we propose a natural combination of both theories:
Super \(\targetACI\)-holomorphic curves are maps from a super Riemann surface to a symplectic manifold satisfying a first-order super differential equation.
This definition of super \(\targetACI\)-holomorphic curve couples the differential equations for \(\targetACI\)-holomorphic curves with spinors and is, in analogy to \(\targetACI\)-holomorphic curves, a critical point of a supergeometric action.
We construct the moduli space of super \(\targetACI\)-holomorphic curves for fixed super Riemann surfaces and codomain.
This moduli space is a finite-dimensional supermanifold obtained as the zero locus of a suitable super differential operator \(\DJBar\), seen as a section of an appropriate bundle over the space of maps.
In upcoming work we hope to extend those results to obtain a moduli space of super stable maps and associated invariants.

To be more precise, recall that a \(\targetACI\)-holomorphic curve is a map \(\varphi\colon \Sigma \to N\) from a Riemann surface \(\Sigma\) to an almost Kähler manifold \((N, \omega, \targetACI)\) such that
\begin{equation}
	\DelJBar\varphi
	= \frac12\left(1+\ACI\otimes\targetACI\right)\differential{\varphi}
	= 0.
\end{equation}
A \(\targetACI\)-holomorphic curve is a generalization of holomorphic maps to the almost Kähler case.
In particular, if the target is Kähler, that is a holomorphic manifold, any \(\targetACI\)-holomorphic curve is holomorphic.
Similarly, every \(\targetACI\)-holomorphic map is harmonic with respect to the Riemannian metric \(n\) on \(N\) obtained from \(\omega\) and \(\targetACI\).
More specifically, \(\targetACI\)-holomorphic maps are absolute minimizers of the Dirichlet action
\begin{equation}
	A(\varphi, g) = \int_\Sigma \norm{\differential{\varphi}}^2_{\dual{g}\otimes\varphi^*n} \d{vol}_g.
\end{equation}
The moduli space of \(\targetACI\)-holomorphic curves can be constructed in several different ways.
One possibility to construct the moduli space of \(\targetACI\)-holomorphic curves is to use the implicit function theorem if the operator \(\DelJBar\) is transversal to the zero section, see~\cite{McDS-JHCST}.

Our definition of super \(\targetACI\)-holomorphic curve allows to generalize those results to supergeometry as follows:
A super Riemann surface is a complex supermanifold of dimension \(1|1\) together with a holomorphic distribution \(\cD\subset\tangent{M}\) of rank \(0|1\) such that \(\cD\otimes\cD\cong\faktor{\tangent{M}}{\cD}\).
We call a map \(\Phi\colon M\to N\) a super \(\targetACI\)-holomorphic curve if
\begin{equation}
	\DJBar\Phi = \left.\frac12\left(1+\ACI\otimes\targetACI\right)\differential\Phi\right|_{\cD} \in\VSec{\dual{\cD}\otimes\Phi^*\tangent{N}}
\end{equation}
vanishes.
Here \(\ACI\) is the almost complex structure on \(\tangent{M}\).
Notice how the condition \(\DJBar\Phi=0\) depends on both, the almost complex structure \(\ACI\) on \(M\) and the distribution~\(\cD\).
This definition of super \(\targetACI\)-holomorphic curves satisfies, like the classical one, that it is a holomorphic map if the target is holomorphic and it is a critical point of the superconformal action
\begin{equation}
	A(\Phi) = \int_M \norm{\left.\differential{\Phi}\right|_{\cD}}^2 [\d{vol}].
\end{equation}
When expressed in component fields this action is the action of the two-dimensional non-linear supersymmetric sigma model or spinning string action functional~\cites{dHP-GSP}{DZ-CASS}{BdVH-LSRIASS}.

To see how our definition of super \(\targetACI\)-holomorphic curve extends the classical one, and in order to apply analytical methods to construct the moduli space, we will need to express the map \(\Phi\) in terms of its component fields.
To this end, let \(i\colon \Smooth{M}\to M\) be a map from a Riemann surface \(\Smooth{M}\) to the super Riemann surface \(M\) which is the identity on the topology.
By the results in~\cite{EK-SGSRSSCA}, the super Riemann surface is completely determined by a Riemannian metric \(g\) on \(\Smooth{M}\), a spinor bundle \(S\) and a spinor-valued differential form \(\chi\in\VSec{\cotangent{\Smooth{M}}\otimes S}\).
The map \(\Phi\) decomposes in component fields \(\varphi\colon \Smooth{M}\to N\), \(\psi\in\VSec{\dual{S}\otimes \varphi^*\tangent{N}}\) and \(F\in\VSec{\varphi^*\tangent{N}}\).
The first main result is then, see Corollary~\ref{cor:SJCComponentEquations} that \(\DJBar\Phi=0\) is equivalent to
\begin{equation}\label{eq:IntroSJCComponentEquations}
	\begin{aligned}
		&\left(1+\ACI\otimes\targetACI\right)\psi = 0 \\
		&F = 0, \\
		&\DelJBar\varphi + \left<Q\chi, \psi\right> + \frac14\Tr_{\dual{g}_S}\left(\gamma\otimes\mathfrak{j}\targetACI\right)\psi = 0, \\
		&\Dirac\psi - 2\left<\vee Q\chi, \differential{\varphi}\right> + \norm{Q\chi}^2\psi - \frac13SR^N(\psi) = 0.
	\end{aligned}
\end{equation}
Here \(Q\chi\) is the projection of \(\chi\) to the irreducible representation of the spin group of type~\(\frac32\).
For the precise definition of the endomorphism \(\mathfrak{j}\) and the curvature term \(SR^N(\psi)\) we refer to Section~\ref{Sec:DJBar}.
One can already notice that one obtains a \(\targetACI\)-holomorphic curve \(\varphi\colon \Smooth{M}\to N\) in case both \(Q\chi\) and \(\psi\) vanish.
In the case of a Kähler target \(N\) and a holomorphic \(i\colon\Smooth{M}\to M\) the Equations~\eqref{eq:IntroSJCComponentEquations} yield a \(\targetACI\)-holomorphic curve \(\varphi\) together with a holomorphic spinor \(\psi\in\VSec{\dual{S}\otimes_\C\varphi^*\tangent{N}}\).

In the remainder of the article we construct a moduli space of super \(\targetACI\)-holomorphic curves using a super differential geometric approach inspired by the construction of the moduli space of \(\targetACI\)-holormorphic curves in~\cite{McDS-JHCST}.
To this end, the underlying Riemann surface and its spinor bundle have to be fixed, while the gravitino may vary in a finite-dimensional superdomain \(\underline{\mathcal{X}}\).
Using the component fields we can equip the space of all maps \(\Fieldspace=\Hom(M, N)\) with the structure of a Fréchet supermanifold in the categorical framework to supergeometry as developed by Molotkov and Sachse in~\cites{M-IDCSM}{S-GAASTS}.
The operator \(\DJBar\Phi\) gives a smooth section \(\underline{\mathcal{S}}\colon \underline{\Fieldspace}\to \underline{\Targetbundle}\) of a smooth infinite-dimensional super vector bundle \(\underline{\Targetbundle}\) over \(\underline{\Fieldspace}\).
Its zero locus is the moduli space of super \(\targetACI\)-holomorphic maps \(\Phi\colon M\to N\).
If the section \(\underline{\mathcal{S}}\) is transversal to the zero-section one can apply an implicit function theorem in a suitable Banach space completion to obtain the main theorem:
\begin{restatable*}{theorem}{MainThm}\label{thm:MainThm}
Let \((N, \omega, \targetACI)\) be an almost Kähler manifold such that the operators \(D_\phi\) and  \(\Red{\Dirac}^{1,0}\) are surjective for all \(\phi\colon \Red{M}\to N\).
Then \(\underline{\mathcal{S}}\) is transversal to the zero section and the moduli space \(\underline{\mathcal{M}(\mathcal{X}, A)}\) of super \(\targetACI\)-holomorphic curves \(\Phi\colon M\to N\) is a smooth supermanifold fibered over \(\underline{\mathcal{X}}\) of relative dimension
	\begin{equation}
		2n(1-p) + 2\left<c_1(\tangent{N}), A\right>|2\left<c_1(\tangent{N}), A\right>.
	\end{equation}
	Here \(p\) is the topological genus of \(M\) and \(A\in H_2(N)\) is the homology class of the image of \(\Red{\Phi}\).
	Reduction yields the moduli space of \(\targetACI\)-holomorphic curves \(\phi\colon \Red{M}\to N\).

	More generally, if the operators \(D_\phi\) and \(\Red{\Dirac}^{1,0}\) are surjective only on a subset \(T\) of the \(\targetACI\)-holomorphic curves \(\phi\colon \Red{M}\to N\), we obtain a supermanifold structure on the restriction \(\underline{\mathcal{M}(\mathcal{X}, A)}|_U\) to an open neighbourhood \(U\supset T\).
\end{restatable*}
Here \(D_\phi\) is the linearisation of \(\DelJBar\phi\) around \(\phi\) and \(\Red{\Dirac}^{1,0}\) is the holomorphic twisted Dirac operator along \(\phi\).
It is known that for generic \(\targetACI\) the operator \(D_\phi\) is surjective for all \(\phi\colon \Red{M}\to N\).
We give criteria for the surjectivity of \(\Red{\Dirac}^{1,0}\) based on Bochner's method and examples in the last section.

One should notice that our definition of super \(\targetACI\)-holomorphic curve differs from the one in~\cite{G-HSCSSM} where it is required that \((1+\ACI\otimes\targetACI)\differential{\Phi}\) vanishes on all of \(\tangent{M}\) and not only on \(\cD\).
Consequently, in the work of Groeger the resulting super \(\targetACI\)-holomorphic curves are not critical points of the superconformal action and their moduli space is a classical manifold.
A moduli space of stable maps from Riemann surfaces to complex superschemes has been constructed before, see~\cite{AG-MSMSM}.
Such maps do not have a spinorial component field, in contrast to the case of super \(\targetACI\)-holomorphic maps from super Riemann surfaces to symplectic manifolds treated here.

The outline of this article is as follows:
In Section~\ref{Sec:DJBar} we will, after a brief recall of supergeometry and super Riemann surfaces, define the notion of super \(\targetACI\)-holomorphic curves in an almost Kähler manifold \(N\) as the zero set of the first order differential operator \(\DJBar\).
We will also derive the component field equations that will be crucial in Section~\ref{Sec:ModuliSpace} to identify the moduli space of super \(\targetACI\)-holomorphic curves in the space of all maps \(M\to N\).
The space of maps is constructed in Section~\ref{Sec:SpaceOfMaps} as an infinite dimensional supermanifold \(\underline{\Fieldspace}\) in the Molotkov--Sachse approach.
The operator \(\DJBar\) is described as a section \(\underline{\mathcal{S}}\) of a vector bundle \(\underline{\Targetbundle}\to\underline{\Fieldspace}\).
In Section~\ref{Sec:ModuliSpace} we show that the section \(\underline{\mathcal{S}}\) is transversal to the zero-section under certain conditions on the almost complex structure~\(\targetACI\) on \(N\).
Then, given transversality, we use the implicit function theorem to show that the moduli space of \(\targetACI\)-holomorphic curves from a fixed super Riemann surface \(M\) in an almost Kähler manifold \(N\) is a finite-dimensional supermanifold given by the zero locus of \(\underline{\mathcal{S}}\) in \(\underline{\Fieldspace}\).

\subsection*{Acknowledgements}
Enno Keßler was supported by a Deutsche Forschungsgemeinschaft Research Fellowship, KE2324/1--1.
Artan Sheshmani was supported partially by the NSF~DMS-1607871, NSF~PHY-1306313, the Simons~38558, and Laboratory of Mirror Symmetry NRU HSE, RF Government grant, ag. No~14.641.31.0001.
Shing-Tung Yau was partially supported by NSF~DMS-1607871, NSF~PHY-1306313, and Simons~38558.
We thank Vladimir Baranovsky, Christopher Beasley, Roman Bezrukavnikov, Dennis Borisov, Ron Donagi, Jürgen Jost, Valentino Tosatti and Ruijun Wu for useful discussion and comments.
The second author would like to thank Professors Yuri Manin, Sheldon Katz, Michael Kapranov, Ron Donagi and Tony Pantev who advised him about the notions of supergeometry in the past years.
 
\counterwithin{equation}{subsection}
\section{Definition of super \texorpdfstring{\(\targetACI\)}{J}-holomorphic curves}\label{Sec:DJBar}

\subsection{Supergeometry}\label{SSec:DJBarSRS}
We use in this work the ringed space approach to supermanifolds, sometimes also called the Kostant-Berezin-Leites approach.
See, for example,~\cites{L-ITS}{DM-SUSY}{CCF-MFS}{EK-SGSRSSCA}.
In this approach, a supermanifold of dimension~\(m|n\) is a locally ringed space \((M, \cO_M)\) which is locally isomorphic to
\begin{equation}
	\R^{m|n}=\left(\R^m, C^\infty(\R^m, \R)\otimes{\bigwedge}_n\right),
\end{equation}
the ringed space consisting of the topological space \(\R^m\) together with the sheaf of smooth real valued functions twisted by a real Grassmann-algebra in \(n\) generators.
If \(x^a\), \(a=1,\dotsc, m\) are the standard coordinates on \(\R^m\) and \(\eta^\alpha\), \(\alpha=1,\dotsc, n\) the generators of \({\bigwedge}_n\) we call \((x^a, \eta^\alpha)\) supercoordinates.
Maps between supermanifolds are required to preserve only the \(\Z_2\)-grading of the sheaf (and not the \(\Z\)-grading).
Many differential geometric concepts such as vector bundles, tangent bundles, Lie groups, principal bundles, connections and complex manifolds generalize to supergeometry in a straightforward manner if one uses the right notion of \(\Z_2\)-grading.

For example, the tangent bundle \(\tangent{M}\) of \(M\) is determined by the locally free sheaf of vector fields of rank \(m|n\).
In the coordinates \((x^a, \eta^\alpha)\) every tangent field \(X\) can be expanded as \(X=X^a\partial_{x^a} + X^\alpha\partial_{\eta^\alpha}\), where \(X^a\), \(X^\alpha\in\cO_M\).
The cotangent bundle is locally spanned by \(\differential{x^a}\), \(\d\eta^\alpha\) satisfying
\begin{equation}
	\left<X, \differential{f}\right> = Xf.
\end{equation}
We use the conventions set in~\cites{DF-SM}{DM-SUSY}, where vector fields are applied from the left to differential forms.

It has often been argued that the study of supermanifolds is incomplete without the use of additional odd parameters, see, for example,~\cites{DM-SUSY}{EK-SGSRSSCA}.
Consequently, we will generalize the notion of supermanifolds which are locally \(\R^{m|n}\) to families of supermanifolds which are locally of the form \(\R^{m|n}\times B\) for a supermanifold \(B\).
Globally, families of supermanifolds are given by a submersion \(M\to B\).
In addition, we will assume that all geometric constructions are functorial under base change, \(M\times_B B'\to B'\).
Hence, every supermanifold \(M\) can be seen as the trivial family \(M\times B\to B\) over \(B\).
Let us demonstrate the effect of the additional parameters in the following example:
\begin{ex}
	Assume that \(y^a\) are coordinates on \(\R^m\) and \((x^a, \eta^\alpha)\) are coordinates on~\(\R^{m|n}\) and let \(i\colon \R^m\to\R^{m|n}\) be a map which is the identity on the topological spaces.
	By the Charts Theorem, see~\cite[Section~2.1.7]{L-ITS}, the map \(i\) can be given in terms of the coordinates and the only possibility is
	\begin{align}
		i^\#x^a &= y^a, &
		i^\#\eta^\alpha &= 0,
	\end{align}
	because the map of sheaves \(i^\#\colon \cO_{\R^{m|n}}\to \cO_{\R^m}\) is a \(\Z_2\)-preserving algebra-homomorphism and \(0\in\cO_{\R^m}\) is the only odd element.

	Suppose now that we have a map \(i\colon \R^m\times B\to \R^{m|n}\times B\) over \(B\) which is again the identity on topological spaces.
	By the Charts Theorem it is again sufficient to give the map in coordinates:
	\begin{align}
		i^\#x^a &= y^a + \xi^a, &
		i^\#\eta^\alpha &= \zeta^\alpha,
	\end{align}
	for nilpotent even functions \(\xi^a\in\cO_{\R^m\times B}\) and odd functions \(\zeta^\alpha\in\cO_{\R^m\times B}\).
	Notice that it is only possible to choose non-zero such functions if the base~\(B\) possesses odd dimensions.
\end{ex}
More generally, it has been shown in~\cite[Chapter~3.3]{EK-SGSRSSCA} that for every supermanifold~\(M\) over \(B\) locally isomorphic to \(\R^{m|n}\times B\) there exists a supermanifold \(\Smooth{M}\) over \(B\) locally isomorphic to \(\R^{m|0}\times B\) and a map \(i\colon\Smooth{M}\to M\) over \(B\) such that \(i\) is the identity on topological spaces.
Without loss of generality, one can assume \(\Smooth{M}=\Red{M}\times B\).
The map \(i\colon \Smooth{M}\to M\) is of great importance to understand in what way supergeometry extends classical geometry and to the understanding of supersymmetry in terms of supergeometry.

In the remainder of Chapter~\ref{Sec:DJBar} we will mostly suppress the \(B\)-dependence.
Instead we will assume that every supermanifold is a not necessarily trivial family of supermanifolds over \(B\).
Every map and all other geometric constructions are also understood to be relative over \(B\).

\subsection{Super Riemann surfaces}
The notion of Riemann surface does not generalize in a straightforward way to supergeometry.
For a list of possible candidates see~\cite[Chapter~4]{S-GAASTS}.
One of those possible candidates, called super Riemann surface or sometimes SUSY-curve, stands out as its theory allows for so many analogies to the theory of Riemann surfaces.
We use the following definition from~\cite{LBR-MSRS}, but the concept goes back at least to~\cite{F-NSTTDCFT}:
\begin{defn}
	A super Riemann surface is a complex supermanifold \(M\) of dimension~1|1 together with a holomorphic distribution \(\cD\subset TM\) of complex rank \(0|1\) such that the commutator of vector fields induces an isomorphism
	\begin{equation}\label{eq:IsoSRS}
			\frac12[\cdot, \cdot]\colon \cD\otimes \cD\to \faktor{TM}{\cD}.
	\end{equation}
\end{defn}

\begin{ex}\label{ex:LocalSRS}
	Let \((z, \theta)\) be holomorphic coordinates on \(\C^{1|1}\).
	The distribution \(\cD\) generated by \(D=\partial_\theta + \theta\partial_z\) defines a super Riemann surface structure on \(\C^{1|1}\).
	Every super Riemann surface is locally isomorphic to \(\C^{1|1}\) with this standard super Riemann surface structure, see~\cite{LBR-MSRS}.
	Holomorphic coordinates \((z, \theta)\) such that \(\cD=\left<\partial_\theta + \theta\partial_z\right>\) are called superconformal coordinates.
\end{ex}

Further examples of super Riemann surfaces include the completion of \(\C^{1|1}\) to the super projective space \(\ProjectiveSpace[\C]{1|1}\) and \(\SuperUpperHalfPlane\), the restriction of \(\C^{1|1}\) to the upper half-plane.
The Uniformization Theorem for super Riemann surfaces, see~\cites{CR-SRSUTT}[Theorem~9.4.1]{EK-SGSRSSCA} states that \(\ProjectiveSpace[\C]{1|1}\), \(\SuperUpperHalfPlane\) and \(\C^{1|1}\) are the only simply connected super Riemann surfaces.
Consequently, any super Riemann surface of genus zero is isomorphic to \(\ProjectiveSpace[\C]{1|1}\), whereas the universal cover of a super Riemann surface of genus one is \(\SuperUpperHalfPlane\) and the universal cover of super Riemann surfaces with genus two or higher is \(\C^{1|1}\).
Here the genus of a super Riemann surface \(M\) is the topological genus of its reduced space \(\Red{M}\).

In~\cites{JKT-SRSMG}{EK-SGSRSSCA} an analogue of harmonic maps on super Riemann surfaces has been studied.
To this end, one needs to introduce compatible Riemannian metrics on the super Riemann surface \(M\).
We call a supersymmetric bilinear form \(m\) on \(M\) a Riemannian metric compatible with the super Riemann surface structure, if it is compatible with the almost complex structure \(\ACI\) on \(\tangent{M}\), that is \(m(\ACI X, \ACI Y) = m(X, Y)\) and turns the isomorphism~\eqref{eq:IsoSRS} into an isomorphism of hermitian line bundles.
In addition one requires positivity of the reduction of \(m\).
The choice of such a metric reduces the structure group of \(M\) to \(\UGL(1)\).
With respect to an \(\UGL(1)\)-frame \(F_A\) the metric is given by
\begin{align}
	m(F_a, F_b) &= \delta_{ab}, &
	m(F_a, F_\beta) &= 0, &
	m(F_\alpha, F_\beta) &= \varepsilon_{\alpha\beta}.
\end{align}
Here we use the customary notation that capital latin letters, for example \(A, B, \dotsc\) refer to all, even and odd, indices, whereas small latin letters such as \(a, b,\dotsc\) only refer to the indices of the even part and small greek letters \(\alpha, \beta,\dotsc\) refer to indices in the odd part.
Furthermore, \(\delta_{ab}\) is the Kronecker-delta and \(\varepsilon_{\alpha\beta}\) is the completely anti-symmetric tensor.

\begin{ex}\label{ex:SCMetricsFrames}
	Let \(D=\partial_{\theta}+\theta\partial_z\) be the generator of \(\cD\) in some superconformal coordinates as in Example~\ref{ex:LocalSRS}.
	For any real function \(\lambda\) and complex function \(l\) the frames
	\begin{align}
		\lambda^2\partial_z + lD &&
		\lambda D
	\end{align}
	define a hermitian form on \(\C^{1|1}\) whose real form gives a Riemannian metric compatible with the super Riemann surface structure.
	Of particular importance is the case
	\begin{align}
		\lambda &= \sqrt{\Im z + \frac12\ic\theta\bar\theta} &
		l &= \frac\ic2\left(\bar\theta - \theta\right)
	\end{align}
	for the upper half plane in \(\C^{1|1}\) because this is an analogue the constant negative curvature metric on super Riemann surfaces.
	Different functions \(\lambda\) and \(l\) induce different metrics lying in the same superconformal class.
\end{ex}
Notice that every such metric \(m\) yields a orthogonal sum decomposition \(\tangent{M}=\cD\oplus\cD^\perp\), where \(\cD^\perp\) is isometrically isomorphic to \(\cD\otimes\cD\).
Another metric \(\tilde{m}\) compatible with the super Riemann surface structure can differ from \(m\) by a superconformal rescaling and a superconformal change of splitting.
A superconformal rescaling, locally described by a change of \(\lambda\) in Example~\ref{ex:SCMetricsFrames}, is a rescaling of the metric on \(\cD\).
A superconformal splitting is a change of the embedding \(\cD^\perp\hookrightarrow\tangent{M}\) and is locally described by a change of the function \(l\).

For any map \(\Phi\colon M\to N\) to a Riemannian supermanifold \((N,n)\) one defines the superconformal action
\begin{equation}
	A(\Phi) = \int_{M}\norm{\left.\differential{\Phi}\right|_{\cD}}^2_{\dual{m}\otimes\Phi^*n}[\d{vol}_g]
\end{equation}
using the theory of Berezin integrals on supermanifolds.
Its critical points satisfy a second order differential equation which can be expressed in terms of a \(\UGL(1)\)-frame \(F_A\) as
\begin{equation}
	0 = \DLaplace\Phi
	= m^{\alpha\beta}\left(\nabla_{F_\alpha}F_\beta\Phi + \left(\Div_m F_\alpha\right)F_\beta\Phi\right).
\end{equation}
The main motivation for the superconformal action is that it reduces to the spinning string action when expressed in component fields.
For more on component fields, see Section~\ref{SSec:DJBarComponentFields} below and~\cite{EK-SGSRSSCA}.

Metrics compatible with the super Riemann surface structure induce additional structure on \(\Smooth{M}\) for fixed \(i\colon \Smooth{M}\to M\).
The isomorphism \(\tangent{\Smooth{M}}=i^*\cD^\perp\) induces a Riemannian metric \(g\) and an almost complex structure \(\ACI\) on \(\tangent{M}\) which are compatible with each other.
The bundle \(S=i^*\cD\) is a spinor bundle on \(\Smooth{M}\) of real rank \(0|2\) and with induced metric~\(g_S\).
The gravitino \(\chi\in\VSec{\cotangent{M}\otimes S}\) is defined by \(\chi=-p_S\circ \differential{i}\), where \(p_S\colon i^*\tangent{M}\to S\) is the orthogonal projector.
The main result is now:
\begin{theorem}[{\cites{JKT-SRSMG}[Corollary~11.3.6]{EK-SGSRSSCA}}]\label{thm:BijectionSRSMetricSpinorBundleGravitino}
	There is a bijection of sets
	\begin{multline}
		\Set{\text{Super Riemann surfaces \(M\) with fixed \(i\colon \Smooth{M}\to M\)}}\\
		\longleftrightarrow
		\faktor{\Set{(g, S, \chi)\text{ on \(\Smooth{M}\)}}}{(g, S, \chi)\sim(\lambda^2g, S, \chi+\gamma s)}.
	\end{multline}
\end{theorem}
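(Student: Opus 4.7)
The forward map is constructive: given $M$ with its embedding $i\colon \Smooth{M}\to M$, \emph{choose} a metric $m$ compatible with the super Riemann surface structure and extract the triple $(g, S, \chi)$ exactly as described in the paragraph preceding the theorem. One then needs to verify that a different choice $\tilde{m}$ leads to an equivalent triple. As indicated in Example~\ref{ex:SCMetricsFrames}, any two compatible metrics differ by a superconformal rescaling governed by a real function $\lambda$ composed with a superconformal change of splitting governed by a complex function $l$. The spinor bundle $S=i^*\cD$ is independent of $m$, since $\cD$ is intrinsic to the super Riemann surface structure. The rescaling by $\lambda$ scales the induced metric on $\cD$, hence on $\cD^\perp\cong\cD\otimes\cD$, by $\lambda^2$, which under the isomorphism $\tangent{\Smooth{M}}\cong i^*\cD^\perp$ yields $g\mapsto \lambda^2 g$. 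The change of splitting alters the embedding $\cD^\perp \hookrightarrow \tangent{M}$, and thereby the orthogonal projector $p_S\colon i^*\tangent{M}\to S$, by a term which precisely shifts $\chi=-p_S\circ \differential{i}$ by $\gamma s$ for some $s\in\VSec{S}$, with $\gamma$ the Clifford multiplication induced by the isomorphism~\eqref{eq:IsoSRS}. This gives a well-defined map on equivalence classes.

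For the reverse map, one must produce from a representative $(g, S, \chi)$ on $\Smooth{M}$ a super Riemann surface $M$ of dimension $1|1$ together with an embedding of $\Smooth{M}$. The plan is to work locally on a cover of $\Smooth{M}$ by open sets carrying a $g$-orthonormal frame of $\tangent{\Smooth{M}}$ and a compatible spinor frame of $S$. Each such frame determines superconformal coordinates $(z,\theta)$ on a local model $\C^{1|1}$ as in Example~\ref{ex:LocalSRS}; the gravitino $\chi$ is then used to twist the tautological embedding of $\Smooth{M}$ so that $\differential{i}$ is no longer contained in $\cD^\perp$. The resulting transition functions between two such charts are determined by the classical transition functions of the chosen frames, and the $\UGL(1)$-reduction implied by $(g, S)$ forces them to be superconformal, so that the local data glue to a globally well-defined super Riemann surface $M$ with canonical embedding $i\colon\Smooth{M}\to M$.

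Finally, the two constructions are verified to be mutually inverse modulo the equivalence relation. Applying the forward map to the $M$ built from $(g, S, \chi)$ returns the original triple because the local construction is tautologically equipped with a compatible metric realising $(g, S, \chi)$ on the nose; conversely, applying the reverse map to $(g, S, \chi)$ extracted from $(M, m)$ reproduces $M$ by the uniqueness of the superconformal structure determined by an orthonormal frame. The \emph{main obstacle} in this program is not the local construction but rather the verification that the two-parameter ambiguity $(\lambda, l)$ in the choice of compatible metric induces exactly the equivalence relation $(g, S, \chi) \sim (\lambda^2 g, S, \chi + \gamma s)$ and no further identifications: one must check that $\lambda$ and $l$ together exhaust all freedom in a $\UGL(1)$-compatible metric within a superconformal class, and that the corresponding change in $\chi$ lies in the Clifford image rather than in a larger subspace of $\VSec{\cotangent{\Smooth{M}}\otimes S}$. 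These verifications reduce to frame computations in superconformal coordinates and are carried out in detail in~\cite{JKT-SRSMG} and~\cite[Chapter~11]{EK-SGSRSSCA}.
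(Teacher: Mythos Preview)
The paper does not actually prove this theorem; it explicitly defers to~\cite{JKT-SRSMG} and~\cite{EK-SGSRSSCA} and only records the single hint that ``the main ingredient for the proof'' is the existence of a \emph{Wess--Zumino frame}: by adjusting the nilpotent part of $m$ and choosing suitable coordinates, one can arrange that the $\UGL(1)$-frame $F_A$ is completely determined by $(f_a, s_\alpha, \chi)$. Your outline is compatible with this and defers to the same references, so in that sense there is nothing to correct.

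That said, the emphasis differs. Your reverse map is phrased as a local gluing construction in superconformal coordinates, with the gravitino entering as a twist of the embedding $i$. The paper's hint points instead to a more frame-theoretic route: once one knows that a Wess--Zumino frame exists and is uniquely determined by $(f_a, s_\alpha, \chi)$, both injectivity of the forward map and the explicit reconstruction of $M$ follow directly, without having to argue separately that the glued transition functions are superconformal. In practice the Wess--Zumino gauge is precisely what makes your phrase ``the local construction is tautologically equipped with a compatible metric realising $(g,S,\chi)$ on the nose'' true, so you may want to name it explicitly rather than leave it implicit in ``frame computations in superconformal coordinates.'' One small slip: your scaling bookkeeping (``scales the induced metric on $\cD$, hence on $\cD^\perp\cong\cD\otimes\cD$, by $\lambda^2$'') conflates the scaling on $\cD$ with that on $\cD\otimes\cD$; compare Remark~\ref{rem:ConformalVarianceComponentsDJBar}, where the induced $g$ scales by $(i^\#\lambda)^4$. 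This does not affect the equivalence class statement, since the $\lambda$ in the theorem is just a placeholder for an arbitrary positive function, but the sentence as written is not internally consistent.
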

We will not repeat the full proof here and refer to~\cites{JKT-SRSMG}{EK-SGSRSSCA}.
Instead we discuss some consequences of Theorem~\ref{thm:BijectionSRSMetricSpinorBundleGravitino} and fix further notation relevant to this article.

Notice that metrics related by a Weyl transformation \(g\sim\lambda^2g\) and gravitinos related by a super Weyl transformation \(\chi\sim \chi+\gamma s\) for \(s\in\VSec{S}\) give the same super Riemann surface.
The Weyl and super Weyl transformations on metric and gravitino are induced by different choices of metric \(m\) on the super Riemann surface.
In order to make this more transparent let \(F_A\) be a real \(\UGL(1)\)-frame on \(M\) for a chosen metric \(m\).
Then \(s_\alpha=i^*F_\alpha\) is an orthonormal spinor frame (\(g_S(s_\alpha, s_\beta)=\varepsilon_{\alpha\beta}\)) and we can always write \(i^*F_a = f_a + \chi(f_a)\), where \(f_a\) is a \(g\)-orthonormal frame on \(\Smooth{M}\) (\(g(f_a, f_b)=\delta_{ab}\)).
One can directly see how the different functions \(\lambda\) and \(l\) in Example~\ref{ex:SCMetricsFrames} induce metrics \(g\) and gravitino related by Weyl and super Weyl transformations.
One notices also that in general \(F_A\) is not completely determined by \(f_a\), \(s_\alpha\), and \(\chi\).
However, by modifying only the nilpotent part of the metric~\(m\) and choosing appropriate coordinates on \(M\) one can bring \(F_A\) in the form of a Wess--Zumino frame which is completely determined by \(f_a\), \(s_\alpha\) and \(\chi\), see~\cite[Chapter~11]{EK-SGSRSSCA} and also the earlier~\cite{H-SWT}.
This is the main ingredient for the proof of Theorem~\ref{thm:BijectionSRSMetricSpinorBundleGravitino}.

We have denoted the Clifford multiplication on \(S\) by \(\gamma\colon \cotangent{\Smooth{M}}\to \End{S}\).
It satisfies \(\gamma(X)\gamma(Y)s + \gamma(Y)\gamma(X)s = 2g(X, Y) s\) and in dimension two it allows to define an isomorphism \(\Gamma\colon S\otimes S\to \tangent{M}\) using contraction with the metrics \(g\) and \(g_S\).
This implies that there is an orthogonal sum decomposition \(\tangent{\Smooth{M}}\otimes S = S\oplus S\otimes_\C S\otimes_\C S\), where the projectors on the factors are given by
\begin{align}
	P\chi &= \frac12 f_a \otimes \gamma(f^a)\gamma(f^b)\chi(f_b), &
	Q\chi &= \frac12 f_a\otimes \gamma(f^b)\gamma(f^a)\chi(f_b).
\end{align}
One checks that the image of \(P\) is isomorphic to \(S\) by using the map \(\delta_\gamma\colon X\otimes s\mapsto \gamma(X)s\).
Its kernel is given by \(\tangent{\Smooth{M}}\otimes_\C S= S\otimes_\C S\otimes_\C S\).
This gives a direct sum decomposition of the real tensor product \(\tangent{\Smooth{M}}\otimes S\) in irreducible representations of \(\UGL(1)\), one of type \(\frac12\) and one of type \(\frac32\).
Only the \(\frac32\)-part of the gravitino is relevant to the reconstruction of the super Riemann surface.
Its \(\frac12\)-part can be arbitrarily modified by a super Weyl-transformation.

\subsection{The operator \texorpdfstring{\(\DJBar\Phi\)}{DJBarPhi}}\label{SSec:DJBarOperator}
Let \((N, \omega)\) be a symplectic manifold of dimension \(2n\).
We assume that \(\targetACI\in\VSec{\End{\tangent{N}}}\) is a compatible almost complex structure, that is, \(\targetACI^2=-\id_{\tangent{N}}\), \(\omega(\targetACI X, \targetACI Y)=\omega(X, Y)\) and \(\omega(\targetACI X, X)>0\) for all \(X\), \(Y\in\VSec{\tangent{N}}\).
In that case we can define a Riemannian metric
\begin{equation}
	n(X, Y) = \omega(\targetACI X, Y)
\end{equation}
on \(N\), such that \(\targetACI\) is an isometry.
We denote the Levi-Civita covariant derivative on \(N\) by \(\nabla\).
We will furthermore need the covariant derivative
\begin{equation}\label{eq:DefinitionNablaBar}
	\nablabar_X Y = \nabla_X Y - \frac12\targetACI\left(\nabla_X\targetACI\right) Y.
\end{equation}
Notice that \(\nablabar\targetACI=0\).
In contrast, \(\nabla\targetACI=0\) holds only if the almost complex structure \(\targetACI\) is integrable, that is, the \((N, \omega, \targetACI)\) is Kähler.

\begin{defn}
	Let \(\Phi\colon M\to N\) be a map from a super Riemann surface \(M\) to \(N\).
	We define the operator \(\DJBar\Phi\in\VSec{\dual{\cD}\otimes \Phi^*\tangent{N}}\) by
	\begin{equation}
		\DJBar\Phi = \frac12\left.\left(\differential{\Phi} + \targetACI\circ \differential{\Phi}\circ \ACI\right)\right|_{\cD}.
	\end{equation}
	We will call maps \(\Phi\) such that \(\DJBar\Phi=0\) super \(\targetACI\)-holomorphic curves.
\end{defn}

We use the name \enquote{super \(\targetACI\)-holomorphic curve} because we think it generalizes \(\targetACI\)-holomorphic curves, where the domain is a Riemann surface, in an interesting way.
We will see in Corollary~\ref{Cor:JHolomorphicCurvesTrivialFamilySRS} below that for trivial families of super Riemann surfaces and underlying even manifold \(\Smooth{M}\to M\), a super \(\targetACI\)-holomorphic curves is equivalent to a \(\targetACI\)-holomorphic curve \(\varphi\colon \Smooth{M}\to M\) together with a holomorphic twisted spinor \(\psi\in\VSec{\dual{S}\otimes\varphi^*\tangent{N}}\).
In addition, super \(\targetACI\)-holomorphic curves satisfy the following supergeometric properties in analogy to \(\targetACI\)-holomorphic curves on Riemann surfaces.
First, super \(\targetACI\)-holomorphic curves are holomorphic if the target is Kähler, see Lemma~\ref{lemma:KählerJholomorphicCurvesAreHolomorphic} below.
Second, super \(\targetACI\)-holomorphic curves satisfy \(\DLaplace\Phi=0\), see Proposition~\ref{prop:SuperJholomorphicCurvesAreDHarmonic} below.
That is, they are critical points of the superconformal action functional, in analogy to the fact that \(\targetACI\)-holomorphic curves are harmonic.

\begin{ex}[Super \(\targetACI\)-holomorphic curves  from \(\C^{1|1}\) to \(\R^{2n}\)]\label{ex:SJCtoR2n}
	Let \((z, \theta)\) be the standard superconformal coordinates from Example~\ref{ex:LocalSRS}, that is, \(\cD\) is generated by \(D=\partial_\theta + \theta\partial_z\).
	Let \((x^a, \eta^\alpha)\), \(a=1, 2\), \(\alpha=3, 4\) be the real coordinates on \(\R^{2|2}\) such that
	\begin{align}
		z &= x^1 + \ic x^2, &
		\theta &= \eta^3 + \ic \eta^4.
	\end{align}
	The standard super Riemann surface structure is given by the frames \(\partial_{x^1}\), \(\partial_{x^2}\), \(D_3\) and \(D_4\) in \(\VSec{\tangent{\R^{2|2}}}\) such that
	\begin{align}
		\partial_z &= \frac12\left(\partial_{x^1} - \ic\partial_{x^2}\right) &
		\partial_{\overline{z}} &= \frac12\left(\partial_{x^1} + \ic\partial_{x^2}\right) \\
		D &= \frac12\left(D_3 - \ic D_4\right) &
		\overline{D} &= \frac12\left(D_3 + \ic D_4\right)
	\end{align}
	in \(\VSec{\tangent{\R^{2|2}}}\otimes\C\).
	Here
	\begin{align}
		D_3
		&= \partial_{\eta^3} + \eta^3\partial_{x^1} + \eta^4\partial_{x^2}
		=\partial_{\eta^3} + \Gamma_{3\nu}^k\eta^\nu \partial_{x^k}, \\
		D_4
		&= \partial_{\eta^4} + \eta^3\partial_{x^2} - \eta^4\partial_{x^1}
		= \partial_{\eta^4} + \Gamma_{4\nu}^k\eta^\nu \partial_{x^k}.
	\end{align}
	The almost complex structure \(\ACI\) on \(\R^{2|2}\) is given by
	\begin{align}
		\ACI\partial_{x^1} &= \partial_{x^2} &
		\ACI D_3 &= D_4
	\end{align}
	and the distribution \(\cD\) is generated by \(D_3\), \(D_4\).

	Let us call coordinates on \(\R^{2n}\) by \(Y^b\) and assume \(\targetACI\partial_{Y^a}=\tensor{\targetACI}{_b^c}\partial_{Y^c}\).
	Then, for any map \(\Phi\colon \C^{1|1}\to \R^{2n}\),
	\begin{equation}
		\begin{split}
			\frac12\left(1+\ACI\otimes\targetACI\right)\differential{\Phi}|_{\cD}
			={}& \frac12\left(1+\ACI\otimes \targetACI\right) D^\alpha\otimes \left(D_\alpha(\Phi^\#Y^b)\partial_{Y^b}\right) \\
			={}& \frac12 D^\alpha\left(D_\alpha\Phi^\#Y^b + \tensor{\ACI}{_\alpha^\beta}D_\beta\Phi^\#Y^c\tensor{\targetACI}{_c^b}\right)\partial_{Y^b}
		\end{split}
	\end{equation}
	That is, \(\Phi\) is a super \(\targetACI\)-holomorphic curve if and only if for all \(a=1, \dotsc, 2n\)
	\begin{equation}
		D_3\left(\Phi^\#Y^b\right) + D_4\left(\Phi^\#Y^c\right)\tensor{\targetACI}{_c^b} = 0.
	\end{equation}
\end{ex}
\begin{ex}[Super \(\targetACI\)-holomorphic curves from \(\C^{1|1}\) to \(\C^n\)]
	Let \((r^b, s^b)\), \(b=1,\dotsc, n\) be coordinates on \(\R^{2n}\).
	The standard symplectic form on \(\R^{2n}\) is given by
	\begin{equation}
		\omega = -\sum_{a=1}^n \d{r^b}\wedge \d{s^b}
	\end{equation}
	and the corresponding almost complex structure by
	\begin{align}
		\targetACI\partial_{r^b} &= \partial_{s^b}, &
		\targetACI\partial_{s^b} &= -\partial_{r^b}.
	\end{align}
	By Example~\ref{ex:SJCtoR2n}, a map \(\Phi\colon\C^{1|1}\to \C^n\) is super \(\targetACI\)-holomorphic, if for \(b=1, \dotsc, n\)
	\begin{align}\label{eq:DelJBarHolomCoord}
		\left(D_3\Phi^\#r^b - D_4\Phi^\#s^b\right) &= 0, &
		\left(D_3\Phi^\#s^b + D_4\Phi^\#r^b\right) &= 0.
	\end{align}

	The complex coordinates \(Z^b=r^b+\ic s^b\in\cO_{\R^{2n}}\otimes\C\) are also holomorphic coordinates for \(\C^n = (\R^{2n}, \targetACI)\).
	We show that the Equations~\eqref{eq:DelJBarHolomCoord} is equivalent to \(\overline{D}\Phi^\#Z^b=0\), where \(\overline{D}=\partial_{\overline{\theta}} + \overline{\theta}\partial_{\overline{z}}\):
	\begin{equation}
		\begin{split}
			\overline{D}\Phi^\#Z^b
			&= \frac12\left(D_3 + \ic D_4\right)\Phi^\#\left(r^b + \ic s^b\right) \\
			&= \frac12\left(D_3\Phi^\#r^b - D_4\Phi^\#s^b\right) + \ic\left(D_4\Phi^\#r^b + D_3\Phi^\#s^b\right)
		\end{split}
	\end{equation}
	Any smooth map \(\Phi\colon \C^{1|1}\to \C^n\) can be expanded with respect to the holomorphic coordinates \((z, \theta)\) on \(\C^{1|1}\) and \(Z^b\) on \(\C^n\) as
	\begin{equation}
		\Phi^\#Z^b = f^b(z, \overline{z}) + \theta g^b(z, \overline{z}) + \overline{\theta} h^b(z, \overline{z}) + \theta\overline{\theta} k^b(z, \overline{z}),
	\end{equation}
	where the functions \(f^b\) and \(k^b\) are even smooth functions of \(z\), \(\overline{z}\), whereas \(g^b\) and \(h^b\) are odd smooth functions of \(z\), \(\overline{z}\).
	Notice that all coefficient functions may depend on additional, possibility odd parameters from the base \(B\) which is suppressed in the notation here.
	Now \(\Phi\) is super \(\targetACI\)-holomorphic if \(\overline{D}\Phi^\#Z^b=0\).
	\begin{equation}
		\begin{split}
			\overline{D}\Phi^\#Z^b
			&= \left(\partial_{\overline{\theta}} + \overline{\theta}\partial_{\overline{z}}\right)\Phi^\#Z^b \\
			&= h^c(z, \overline{z}) - \theta k^b(z, \overline{z}) + \overline{\theta}\partial_{\overline{z}}f^b(z, \overline{z}) - \theta\overline{\theta}\partial_{\overline{z}}g^b(z, \overline{z})
		\end{split}
	\end{equation}
	Consequently, any map \(\Phi\) is super \(\targetACI\)-holomorphic if and only if it is holomorphic, that is,
	\begin{equation}
		\Phi^\#Z^b = f^b(z) + \theta g^b(z).
	\end{equation}
\end{ex}

As all coordinate charts of a Kähler manifold are of the form \(\C^n\), we have:
\begin{lemma}\label{lemma:KählerJholomorphicCurvesAreHolomorphic}
	Let \(\Phi\colon M\to N\) be a map from a super Riemann surface into a Kähler manifold~\((N, \omega, \targetACI)\).
	Then, \(\Phi\) is a super \(\targetACI\)-holomorphic curve if and only if \(\Phi\) is a holomorphic map.
\end{lemma}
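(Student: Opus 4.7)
The plan is to reduce the statement to the local coordinate computation already carried out in the preceding example. First, I would observe that both conditions in the lemma are local on \(M\) and \(N\): the section \(\DJBar\Phi\in\VSec{\dual{\cD}\otimes\Phi^*\tangent{N}}\) vanishes iff it vanishes on every chart of a cover of \(M\), and holomorphicity of a map between complex supermanifolds can be tested on the pullbacks \(\Phi^\#Z^b\) of local holomorphic coordinates on \(N\). Since \((N,\omega,\targetACI)\) is Kähler, it is in particular a complex manifold, so we may cover \(N\) by holomorphic coordinate charts \(Z^1,\dotsc,Z^n\); in turn we cover \(M\) by superconformal coordinate charts \((z,\theta)\) as in Example~\ref{ex:LocalSRS}.

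Second, I would invoke the fact that \(\DJBar\Phi\) depends on the codomain only through the almost complex structure \(\targetACI\), not through the symplectic form \(\omega\) or the associated Riemannian metric \(n\). On a holomorphic chart, \(\targetACI\) takes its standard form \(\targetACI\partial_{r^b}=\partial_{s^b}\), \(\targetACI\partial_{s^b}=-\partial_{r^b}\), and this is exactly the setting computed in the second example. Hence that computation applies verbatim on each such chart and shows that \(\left.\DJBar\Phi\right|_{\text{chart}}=0\) is equivalent to \(\overline{D}\Phi^\#Z^b=0\) for all \(b=1,\dotsc,n\), where \(\overline{D}=\partial_{\overline{\theta}}+\overline{\theta}\partial_{\overline{z}}\).

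Third, the example goes on to expand \(\Phi^\#Z^b=f^b(z,\overline{z})+\theta g^b(z,\overline{z})+\overline{\theta}h^b(z,\overline{z})+\theta\overline{\theta}k^b(z,\overline{z})\) and shows that \(\overline{D}\Phi^\#Z^b=0\) forces \(h^b=k^b=0\) and \(\partial_{\overline{z}}f^b=\partial_{\overline{z}}g^b=0\). The resulting expansion \(\Phi^\#Z^b=f^b(z)+\theta g^b(z)\) with \(f^b,g^b\) holomorphic in \(z\) is the precise statement that \(\Phi^\#Z^b\) is a holomorphic function on the chart, and therefore that \(\Phi\) is a holomorphic map there. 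Since this equivalence holds on every chart of a covering, the global equivalence follows.

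I do not foresee any real obstacle. The one subtlety worth spelling out is the behaviour with respect to a base \(B\): the coefficient functions \(f^b,g^b,h^b,k^b\) will in general depend on additional parameters from \(B\) (as remarked in the example), but the operator \(\overline{D}\) is \(\cO_B\)-linear, so the algebraic manipulation producing the vanishing conditions goes through unchanged and the family-parametric case reduces to the absolute one.
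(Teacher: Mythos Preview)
Your proposal is correct and follows essentially the same approach as the paper, which simply observes that every coordinate chart of a K\"ahler manifold is of the form \(\C^n\) and therefore the preceding example applies. Your version is more carefully argued---in particular, you make explicit that \(\DJBar\Phi\) depends only on \(\targetACI\) and not on \(\omega\) or \(n\), and you address the base \(B\)---but the underlying route is the same local reduction to the \(\C^{1|1}\to\C^n\) computation.
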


\begin{rem}
	In a series of papers~\cites{G-HSCSSM}{G-THSC}{G-CHSC} a different notion of super \(\targetACI\)-holomorphic curves has been proposed.
	Despite the fact that the equations defining a super \(\targetACI\)-holomorphic curve in~\cite{G-HSCSSM} have some similarities with those in Corollary~\ref{cor:SJCComponentEquations}, the theory developed there is quite different from ours.
	First, in those papers a super \(\targetACI\)-holomorphic curve is required to satisfy
	\begin{equation}
		\differential{\Phi} + \targetACI\differential{\Phi}\ACI = 0
	\end{equation}
	for the whole of \(\tangent{M}\) and not only for \(\cD\subset\tangent{M}\), see~\cite[Definition~4.18]{G-HSCSSM}.
	Consequently, the resulting super \(\targetACI\)-holomorphic curves are shown to be critical points of an action that differs substantially from the superconformal action, see~\cite[Theorem~3.2]{G-HSCSSM}.
	In the work by~\citeauthor{G-HSCSSM} the resulting moduli space is a classical manifold with no odd directions.
\end{rem}
\begin{rem}
	In~\cite{AG-MSMSM} the moduli space of stable maps from a Riemann surface to a complex superscheme has been constructed as a finite-dimensional superstack.
	This work differs in several aspects from ours.
	First, in their work, the domain is a classical Riemann surface and the target is a superdomain, whereas here the domain is a super Riemann surface and the target a classical almost Kähler manifold.
	However, the additional structure of a super Riemann surface is necessary for the spinorial part of the map as we will see in the next section.
	Also the relation to the superconformal action depends crucially on the super Riemann surface structure.
	While the concept of Gromov--Witten invariants has been extended to supermanifolds in~\cite{AG-MSMSM}, our work might in contrast rather lead to finer invariants of classical almost Kähler manifolds.
	Finally, their work uses an algebro-geometric approach for complex superschemes as targets, whereas we use tools from differential geometry for almost Kähler targets.
\end{rem}
\begin{rem}
	Several generalizations of super \(\targetACI\)-holomorphic curves are possible:
	If the target \(N\) is only almost complex but not symplectic, the results of Section~\ref{SSec:DJBarComponentFields} still hold.
	The Energy Identities of Section~\ref{SSec:DJBarEnergyIdentities} and Section~\ref{Sec:ModuliSpace} fail.
	If the target \(N\) is a symplectic supermanifold the results of Section~\ref{Sec:DJBar} still hold, but the moduli space in Chapter~\ref{Sec:ModuliSpace} would be bigger.
	Section~\ref{Sec:SpaceOfMaps} depends on the properties of the exponential maps, possibly allowing a generalization to Riemannian supermanifolds.
\end{rem}

\subsection{Energy Identities}\label{SSec:DJBarEnergyIdentities}
In this Section, we discuss the relationship between super \(\targetACI\)-holomorphic curves and critical points of the superconformal action.
Recall, that classical \(\targetACI\)-holomorphic curves are absolute minimizers of the harmonic action.
This is seen, using the Energy Identities, compare~\cite[Lemma~2.2.1]{McDS-JHCST}.
The Energy Identities generalize easily to super \(\targetACI\)-holomorphic curves.

\begin{prop}[Energy Identities]
	Let \(m\) be a metric on \(M\) compatible with the super Riemann surface structure and let \(F_A\) be an \(\UGL(1)\)-frame on \(M\).
	Then for any map \(\Phi\colon M\to N\) to a Riemannian manifold \((N,n)\)
	\begin{equation}
		A(\Phi)
		= \int_M \norm{\left.\differential\Phi\right|_{\cD}}^2_{\dual{m}|_\cD\otimes \Phi^*n}[\d{vol}_m]
		= \int_{M} 2\norm{\DJBar\Phi}^2 - m^{\alpha\beta}\Phi^*n\left(\targetACI\tensor{\ACI}{_\alpha^\mu}F_\mu\Phi, F_\beta\Phi\right) [\d{vol}_m].
	\end{equation}
\end{prop}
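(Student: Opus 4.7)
The identity is pointwise in nature: the Berezin integral on both sides is just packaging, so it suffices to verify at each point of $M$ the local identity
\[
\norm{d\Phi|_\cD}^2 = 2\norm{\DJBar\Phi}^2 - m^{\alpha\beta}\Phi^*n(\targetACI\tensor{\ACI}{_\alpha^\mu}F_\mu\Phi, F_\beta\Phi).
\]
The plan is to expand $4\norm{\DJBar\Phi}^2$ directly from $\DJBar\Phi = \tfrac12(d\Phi + \targetACI\circ d\Phi\circ\ACI)|_\cD$ in the $\UGL(1)$-frame $F_A$, producing four terms, and then use the three standing facts: \emph{(i)} $\targetACI$ is an isometry of $n$; \emph{(ii)} the $\UGL(1)$-frame condition says that $\ACI$ is an $m$-isometry on $\cD$, and in particular $m^{\alpha\beta}\tensor{\ACI}{_\alpha^\mu}\tensor{\ACI}{_\beta^\nu} = m^{\mu\nu}$ on the dual metric; \emph{(iii)} the supersymmetry of $m$ and $n$ introduces Koszul signs on odd arguments that pair up.

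Writing $T_\alpha = F_\alpha\Phi + \tensor{\ACI}{_\alpha^\mu}\targetACI F_\mu\Phi$, the expansion $4\norm{\DJBar\Phi}^2 = m^{\alpha\beta}\Phi^*n(T_\alpha, T_\beta)$ splits into two pure and two mixed terms. The pure term $m^{\alpha\beta}n(F_\alpha\Phi, F_\beta\Phi)$ equals $\norm{d\Phi|_\cD}^2$ by definition, and the twisted pure term $m^{\alpha\beta}\tensor{\ACI}{_\alpha^\mu}\tensor{\ACI}{_\beta^\nu}n(\targetACI F_\mu\Phi, \targetACI F_\nu\Phi)$ reduces to a second copy of $\norm{d\Phi|_\cD}^2$ by applying \emph{(i)} and then \emph{(ii)}. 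The two mixed terms $m^{\alpha\beta}\tensor{\ACI}{_\beta^\nu}n(F_\alpha\Phi, \targetACI F_\nu\Phi)$ and $m^{\alpha\beta}\tensor{\ACI}{_\alpha^\mu}n(\targetACI F_\mu\Phi, F_\beta\Phi)$ agree: relabelling $\alpha\leftrightarrow\beta$ in the first produces a sign from $m^{\alpha\beta}=-m^{\beta\alpha}$ on odd indices and another from the Koszul antisymmetry $n(V, W) = -n(W, V)$ on odd sections, and these two signs cancel.

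Combining gives $4\norm{\DJBar\Phi}^2 = 2\norm{d\Phi|_\cD}^2 + 2m^{\alpha\beta}\Phi^*n(\targetACI\tensor{\ACI}{_\alpha^\mu}F_\mu\Phi, F_\beta\Phi)$; dividing by two and rearranging yields the stated identity. No real geometry enters beyond the three standing facts — the computation is pure linear algebra on the fibres of $\cD$ and $\Phi^*\tangent{N}$. The only real obstacle is the bookkeeping of Koszul signs, in particular ensuring that the two minus signs in the cross-term actually cancel (rather than reinforce), which is precisely what produces the single minus sign in front of the symplectic-like correction on the right-hand side.
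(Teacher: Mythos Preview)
Your proof is correct and is essentially the same computation as the paper's: expand \(\norm{\DJBar\Phi}^2\) in the \(\UGL(1)\)-frame, use that \(\targetACI\) and \(\ACI\) are isometries to identify the two ``pure'' terms, and use the Koszul/antisymmetry bookkeeping to identify the two ``mixed'' terms. The paper just writes this in a single displayed line, going directly from \(\tfrac12\varepsilon^{\alpha\beta}\Phi^*n(T_\alpha,T_\beta)\) to \(\varepsilon^{\alpha\beta}\Phi^*n(F_\alpha\Phi,F_\beta\Phi)+\varepsilon^{\alpha\beta}\Phi^*n(\targetACI\tensor{\ACI}{_\alpha^\mu}F_\mu\Phi,F_\beta\Phi)\) without spelling out the sign cancellations you describe.
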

\begin{proof}
	\begin{equation}
		\begin{split}
			2\norm{\DJBar\Phi}^2
			&= \frac12\varepsilon^{\alpha\beta}\Phi^*n\left(F_\alpha + \targetACI\left(\tensor{\ACI}{_\alpha^\mu}F_\mu\Phi\right), F_\beta + \targetACI\left(\tensor{\ACI}{_\beta^\nu}F_\nu\Phi\right)\right) \\
			&= \varepsilon^{\alpha\beta}\Phi^*n\left(F_\alpha\Phi, F_\beta\Phi\right) + \varepsilon^{\alpha\beta}\Phi^*n\left(\targetACI\tensor{\ACI}{_\alpha^\mu}F_\mu\Phi, F_\beta\Phi\right)
			\qedhere
		\end{split}
	\end{equation}
\end{proof}
In the classical case of a map from a Riemann surfaces to a symplectic manifold with compatible almost complex structure \(\targetACI\), the second summand is the pullback of the symplectic form along~\(\Phi\).
Consequently, the summand has a homological interpretation and the integral is constant in the homology class of~\(\Phi\).
This does fail in the super case.
Nevertheless, super \(\targetACI\)-holomorphic maps are critical for the superconformal action:
\begin{prop}\label{prop:SuperJholomorphicCurvesAreDHarmonic}
	Let \(m\) be a superconformal metric on \(M\), see~\cite[Chapter 9.6]{EK-SGSRSSCA} and \(\DLaplace\) the resulting \(\cD\)-Laplace operator.
	Then, for all \(m\)-orthonormal frames \(F_A\),
	\begin{equation}
		\begin{split}
			\DLaplace\Phi
			={}& \varepsilon^{\alpha\beta}\left(\nabla_{F_\alpha} + \Div_m F_\alpha - \targetACI\tensor{\ACI}{_\alpha^\sigma}\left(\nabla_{F_\sigma} + \Div_m{F_\sigma}\right)\right)\left<F_\beta, \DJBar\Phi\right> \\
				&+ \varepsilon^{\alpha\beta}\tensor{\ACI}{_\alpha^\sigma} \left<\left<F_\sigma, \DJBar\right>, \Phi^*\nabla\targetACI\right> F_\beta\Phi.
		\end{split}
	\end{equation}
	In particular, super \(\targetACI\)-holomorphic maps into such manifolds satisfy \(\DLaplace\Phi=0\).
\end{prop}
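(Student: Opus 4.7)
The plan is to prove the identity by substitution and Leibniz expansion. Starting from
\begin{equation}
\DLaplace\Phi = m^{\alpha\beta}\bigl(\nabla_{F_\alpha}F_\beta\Phi + (\Div_m F_\alpha)F_\beta\Phi\bigr),
\end{equation}
the key step is the substitution
\begin{equation}
F_\beta\Phi = 2\left<F_\beta,\DJBar\Phi\right> - \tensor{\ACI}{_\beta^\sigma}\targetACI F_\sigma\Phi,
\end{equation}
obtained by pairing $F_\beta\in\cD$ against the defining formula of $\DJBar\Phi$. Inserting this, together with the crucial fact that for a $\UGL(1)$-frame the scalar matrix entries $\tensor{\ACI}{_\beta^\sigma}$ are constant (so $F_\alpha\tensor{\ACI}{_\beta^\sigma}=0$), decomposes $\DLaplace\Phi$ into a leading piece $2m^{\alpha\beta}(\nabla_{F_\alpha}+\Div_m F_\alpha)\left<F_\beta,\DJBar\Phi\right>$, a $\Phi^*\nabla\targetACI$-contribution $-m^{\alpha\beta}\tensor{\ACI}{_\beta^\sigma}(\Phi^*\nabla_{F_\alpha}\targetACI)F_\sigma\Phi$, and a residual derivative term $-m^{\alpha\beta}\tensor{\ACI}{_\beta^\sigma}\targetACI(\nabla_{F_\alpha}+\Div_m F_\alpha)F_\sigma\Phi$ which still contains an underived $F_\sigma\Phi$.

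To handle the residual term I would substitute the same identity once more into $F_\sigma\Phi$, again via Leibniz. The algebraic identities $\tensor{\ACI}{_\beta^\sigma}\tensor{\ACI}{_\sigma^\tau}=-\delta_\beta^\tau$ and $\targetACI^2=-\id$ cause a second $\Phi^*\nabla\targetACI$-contribution to appear and a full copy of $\DLaplace\Phi$ to re-emerge, which can be subtracted from both sides. After this cancellation the surviving derivative pieces reorganize, through the antisymmetry of $m^{\alpha\beta}$ on the odd indices, into the combination $m^{\alpha\beta}(\nabla_{F_\alpha}+\Div_m F_\alpha - \targetACI\tensor{\ACI}{_\alpha^\sigma}(\nabla_{F_\sigma}+\Div_m F_\sigma))\left<F_\beta,\DJBar\Phi\right>$ appearing in the claim. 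To merge the two $\Phi^*\nabla\targetACI$-terms into the single curvature expression stated, I would invoke the projection identity $\left<F_\beta,\DJBar\Phi\right>=\tensor{\ACI}{_\beta^\sigma}\targetACI\left<F_\sigma,\DJBar\Phi\right>$, automatic because $\DJBar\Phi$ lies in the image of $\tfrac12(1+\ACI\otimes\targetACI)|_{\cD}$, together with the anticommutation $\targetACI\circ\nabla\targetACI = -\nabla\targetACI\circ\targetACI$ following from differentiating $\targetACI^2=-\id$.

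The main obstacle will be the algebraic bookkeeping: tracking the Koszul signs introduced when the odd covariant derivative $\nabla_{F_\alpha}$ is commuted past $\tensor{\ACI}{_\beta^\sigma}$ and past odd tangent vectors, and matching the positions of contracted odd indices under the antisymmetric $m^{\alpha\beta}$. Once the displayed identity is established the final assertion is immediate: if $\DJBar\Phi=0$ then $\left<F_\beta,\DJBar\Phi\right>=\left<F_\sigma,\DJBar\Phi\right>=0$, so each summand on the right-hand side vanishes and $\DLaplace\Phi=0$, exhibiting super $\targetACI$-holomorphic maps as critical points of the superconformal action.
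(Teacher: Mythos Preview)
Your substitute-twice-and-subtract strategy does not isolate \(\DLaplace\Phi\); it collapses to a tautology. After the first substitution you correctly obtain
\[
\DLaplace\Phi = 2\varepsilon^{\alpha\beta}(\nabla_{F_\alpha}+\Div_m F_\alpha)\langle F_\beta,\DJBar\Phi\rangle - \varepsilon^{\alpha\beta}\tensor{\ACI}{_\beta^\sigma}(\nabla_{F_\alpha}\targetACI)F_\sigma\Phi + R,
\]
with residual \(R=-\varepsilon^{\alpha\beta}\tensor{\ACI}{_\beta^\sigma}\targetACI(\nabla_{F_\alpha}+\Div_m F_\alpha)F_\sigma\Phi\). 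When you substitute once more inside \(R\), the \(\ACI^2=-\id\) and \(\targetACI^2=-\id\) combine with sign \(+1\), so the new residual is \emph{exactly} \(+\DLaplace\Phi\), not a multiple that can be moved to the left. Subtracting \(\DLaplace\Phi\) from both sides therefore yields \(0 = (\text{sum of }\DJBar\text{- and }\nabla\targetACI\text{-terms})\), and one checks using the projector identity \(\langle F_\beta,\DJBar\Phi\rangle=\tensor{\ACI}{_\beta^\sigma}\targetACI\langle F_\sigma,\DJBar\Phi\rangle\) that this sum is identically zero. You have recovered a true but empty statement and never expressed \(\DLaplace\Phi\) in terms of \(\DJBar\Phi\).

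The ingredient you are missing is what kills the residual after a \emph{single} substitution: the vanishing
\[
\varepsilon^{\alpha\beta}\tensor{\ACI}{_\alpha^\sigma}\bigl(\nabla_{F_\sigma}F_\beta\Phi+(\Div_m F_\sigma)F_\beta\Phi\bigr)=0,
\]
which is how the paper proceeds. This is not purely algebraic; it uses that the Levi--Civita connection on \(N\) is torsion-free, together with the commutator relation \([F_\sigma,F_\beta]=-2\varepsilon_{\sigma\beta}\Div_m(F_\mu)\ACI^{\mu\nu}F_\nu + \dotsb\) for the \(\UGL(1)\)-frame (Lemma~10.2.13 in~\cite{EK-SGSRSSCA}). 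Concretely, \(\varepsilon^{\alpha\beta}\tensor{\ACI}{_\alpha^\sigma}=\delta^{\sigma\beta}\), and torsion-freeness gives \(\nabla_{F_\sigma}F_\beta\Phi+\nabla_{F_\beta}F_\sigma\Phi=[F_\sigma,F_\beta]\Phi\); the commutator lemma then converts this into the divergence term. Once \(R=0\), the remaining \(\nabla\targetACI\)-term must also be massaged into the stated form \(\varepsilon^{\alpha\beta}\tensor{\ACI}{_\alpha^\sigma}\langle\langle F_\sigma,\DJBar\Phi\rangle,\Phi^*\nabla\targetACI\rangle F_\beta\Phi\); for this the paper invokes the almost K\"ahler identity \((\nabla_X\targetACI)\targetACI Y=(\nabla_{\targetACI X}\targetACI)Y\), equivalent to closedness of \(\omega\), which your list of tools (the anticommutation \(\targetACI\nabla\targetACI=-\nabla\targetACI\,\targetACI\) alone) does not supply.
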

\begin{proof}
	Straightforward calculation yields
	\begin{equation}
		\begin{split}
			\MoveEqLeftR
			2\varepsilon^{\alpha\beta}\left(\nabla_{F_\alpha} + \Div_m F_\alpha - \targetACI\tensor{\ACI}{_\alpha^\sigma}\left(\nabla_{F_\sigma} + \Div_m{F_\sigma}\right)\right)\left<F_\beta, \DJBar\Phi\right> \\
			={}& \varepsilon^{\alpha\beta}\left(\nabla_{F_\alpha} + \Div_m F_\alpha - \targetACI\tensor{\ACI}{_\alpha^\sigma}\left(\nabla_{F_\sigma} + \Div_m{F_\sigma}\right)\right)\left(F_\beta\Phi + \tensor{\ACI}{_\beta^\tau}\targetACI F_\tau\Phi\right) \\
			={}& \varepsilon^{\alpha\beta}\left(\nabla_{F_\alpha}F_\beta\Phi + \left(\Div_m F_\alpha\right)F_\beta\Phi - \targetACI\tensor{\ACI}{_\alpha^\sigma}\nabla_{F_\sigma}F_\beta\Phi - \targetACI\tensor{\ACI}{_\alpha^\sigma}\left(\Div_m F_\sigma\right) F_\beta\Phi \right.\\
				&+ \tensor{\ACI}{_\beta^\tau}\left(\nabla_{F_\alpha}\targetACI\right)F_\tau\Phi + \tensor{\ACI}{_\beta^\tau}\targetACI\nabla_{F_\alpha}F_\tau\Phi + \tensor{\ACI}{_\beta^\tau}\targetACI\left(\Div_m F_\alpha\right)F_\tau\Phi \\
				&- \left.\targetACI\tensor{\ACI}{_\alpha^\sigma}\tensor{\ACI}{_\beta^\tau}\left(\nabla_{F_\sigma} \targetACI\right)F_\tau\Phi + \tensor{\ACI}{_\alpha^\sigma}\tensor{\ACI}{_\beta^\tau}\nabla_{F_\sigma}F_\tau\Phi + \tensor{\ACI}{_\alpha^\sigma}\tensor{\ACI}{_\beta^\tau}\left(\Div_m F_\sigma\right)F_\tau\Phi\right) \\
			={}& 2\varepsilon^{\alpha\beta}\left(\nabla_{F_\alpha}F_\beta\Phi + \left(\Div_m F_\alpha\right)F_\beta\Phi\right) - 2\targetACI\varepsilon^{\alpha\beta}\tensor{\ACI}{_\alpha^\sigma}\left(\nabla_{F_\sigma}F_\beta\Phi + \left(\Div_m F_\sigma\right) F_\beta\Phi\right) \\
				&- \varepsilon^{\alpha\beta}\tensor{\ACI}{_\alpha^\sigma}\left(\nabla_{F_\sigma}\targetACI\right)\left(F_\beta\Phi - \targetACI\tensor{\ACI}{_\beta^\tau}F_\tau\Phi\right).
		\end{split}
	\end{equation}
	The connection on \(N\) is torsionfree and by~\cite[Lemma~10.2.13]{EK-SGSRSSCA}:
	\begin{equation}
		\begin{split}
			0
			&= \varepsilon^{\alpha\beta}\tensor{\ACI}{_\alpha^\sigma}\Phi^*T\left(F_\sigma\Phi, F_\beta\Phi\right) \\
			&= \varepsilon^{\alpha\beta}\tensor{\ACI}{_\alpha^\sigma}\left(\nabla_{F_\sigma}F_\beta\Phi + \nabla_{F_\beta}F_\sigma\Phi - [F_\sigma, F_\beta]\Phi\right) \\
			&= 2\varepsilon^{\alpha\beta}\tensor{\ACI}{_\alpha^\sigma}\left(\nabla_{F_\sigma}F_\beta\Phi + \left(\Div_m F_\sigma\right)F_\beta\Phi\right).
		\end{split}
	\end{equation}
	By~\cite[Prop.~IX.4.2 and Theorem~IX.4.3]{KN-FDG}, the condition \(\left(\nabla_X \targetACI\right)\targetACI Y = \left(\nabla_{\targetACI X}\targetACI\right) Y\) is equivalent to \(n(\targetACI\cdot, \cdot)\) being symplectic in the case of ordinary manifolds.
	Hence,
	\begin{equation}\label{eq:NablaTargetACI}
		\varepsilon^{\alpha\beta}\tensor{\ACI}{_\alpha^\sigma}\left(\nabla_{F_\sigma}\targetACI\right)\left(F_\beta\Phi - \targetACI\tensor{\ACI}{_\beta^\tau}F_\tau\Phi\right)
		= 2\varepsilon^{\alpha\beta}\tensor{\ACI}{_\alpha^\sigma} \left<\left<F_\sigma, \DJBar\Phi\right>, \Phi^*\nabla\targetACI\right> F_\beta\Phi
	\end{equation}
	and the result follows.
\end{proof}
\begin{rem}
	Let \(\tilde{m}\) be the superconformal metric arising from \(m\) by a superconformal rescaling by \(\lambda\) and a superconformal change of splitting.
	In that case the \(\cD\)-Laplace operator obtained from \(\tilde{m}\) is given by
	\begin{equation}
		\widetilde{\Delta}^{\cD} = \lambda^{-2}\DLaplace.
	\end{equation}
	Consequently, fact that super \(\targetACI\)-holomorphic curves are satisfy \(\DLaplace\Phi=0\) is independent of the chosen metric in the superconformal class.
	This is consistent with the fact that the operator \(\DJBar\) is defined independently of the superconformal metric \(m\).
\end{rem}

\subsection{Component Fields}\label{SSec:DJBarComponentFields}
Let \(i\colon \Smooth{M}\to M\) be an underlying even manifold of the super Riemann surface \(M\).
In~\cites{JKT-SRSMG}{EK-SGSRSSCA} it was shown that the map \(\Phi\colon M\to N\) is completely determined by its component fields
\begin{align}
	\varphi &= \Phi\circ i\colon \Smooth{M}\to N, \\
	\psi &= i^*\left.\differential{\Phi}\right|_{\cD}\in\VSec{\dual{S}\otimes\varphi^*\tangent{N}}, \\
	F &= -\frac12i^*\DLaplace\Phi\in\VSec{\varphi^*\tangent{N}}.
\end{align}
The proof reduces to choose good local coordinates \((x^a, \eta^\alpha)\) on \(M\) such that
\begin{equation}
	\Phi = \varphi(x) + \eta^\mu\tensor[_\mu]{\psi}{} + \eta^3\eta^4 F.
\end{equation}

The goal of this subsection is to obtain differential equations for the fields \(\varphi\), \(\psi\) and~\(F\) that are equivalent to \(\DJBar\Phi=0\).
In the remainder of this article, we will use those differential equations to construct the moduli space of super \(\targetACI\)-holomorphic curves.

\begin{lemma}\label{lemma:DefinitionComponentEquations}
	We define the component fields of \(A\in\VSec{\dual{\cD}\otimes\Phi^*\tangent{N}}^{0,1}\) as
	\begin{align}
		&i^*A \in\VSec{\dual{S}\otimes\varphi^*\tangent{N}}^{0,1}, \\
		&\frac12\Tr_{\dual{g}_S}i^*\nablabar^{\cotangent{M}\otimes\Phi^*\tangent{N}}A \in\VSec{\varphi^*\tangent{N}}, \\
		&\frac12\Tr_{\dual{g}_S}\left(\id_{\dual{S}}\otimes\gamma\otimes\id_{\varphi^*\tangent{N}}\right)i^*\nablabar^{\cotangent{M}\otimes\Phi^*\tangent{N}}A \in\VSec{\cotangent{\Smooth{M}}\otimes\varphi^*\tangent{N}}^{0,1}, \\
		&-\frac12i^*\DBarLaplace A \in \VSec{\dual{S}\otimes \varphi^*\tangent{N}}^{0,1}.
	\end{align}
	Here, \(\nablabar^{\cotangent{M}\otimes\Phi^*\tangent{N}}=\nabla^{\tangent{M}}\otimes\nablabar\) is the connection induced by \(\nabla^{\cotangent{M}}\) and the almost complex covariant derivative \(\nablabar\) defined in Equation~\eqref{eq:DefinitionNablaBar}, whereas \(\DBarLaplace\) is the \(\cD\)-Laplace operator obtained from the connection \(\nablabar^{\cotangent{M}\otimes\Phi^*\tangent{N}}\):
	\begin{equation}
		\DBarLaplace A
		= \varepsilon^{\alpha\beta}\left(\nablabar^{\cotangent{M}\otimes\Phi^*\tangent{N}}_{F_\alpha}\nablabar^{\cotangent{M}\otimes\Phi^*\tangent{N}}_{F_\beta} + \left(\Div_m F_\alpha\right)\nablabar^{\cotangent{M}\otimes\Phi^*\tangent{N}}_{F_\beta}\right)A.
	\end{equation}
	The map
	\begin{multline}
		\VSec{\dual{\cD}\otimes\Phi^*\tangent{N}}^{0,1} \to \\
		\VSec{\dual{S}\otimes\varphi^*\tangent{N}}^{0,1}\oplus\VSec{\varphi^*\tangent{N}}\oplus\VSec{\cotangent{\Smooth{M}}\otimes\varphi^*\tangent{N}}^{0,1}\oplus\VSec{\dual{S}\otimes\varphi^*\tangent{N}}^{0,1}
	\end{multline}
	that maps a section to its components fields is an isomorphism of \(\cO_B\)-modules.
\end{lemma}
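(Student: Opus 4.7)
The plan is to establish the isomorphism by a local coordinate computation and then verify that it matches the invariant formulas stated in the lemma. First, working in Wess--Zumino coordinates $(x^a,\eta^\alpha)$ on $M$ adapted to the fixed embedding $i\colon\Smooth{M}\to M$ (as in Section~\ref{SSec:DJBarSRS}), I would expand a section $A\in\VSec{\dual{\cD}\otimes\Phi^*\tangent{N}}^{0,1}$ in the odd coordinates as
\begin{equation}
A \;=\; A_{(0)} + \eta^3 A_{(3)} + \eta^4 A_{(4)} + \eta^3\eta^4\, A_{(34)},
\end{equation}
where each $A_{(\bullet)}$ is an element of the corresponding tensor product of bundles on $\Smooth{M}$. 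The $(0,1)$-condition $A\circ\ACI=-\targetACI\circ A$ cuts the fibre rank of $\dual{\cD}\otimes\Phi^*\tangent{N}$ in half, and a straightforward count shows that the total rank matches exactly the rank $4n|4n$ of the target $\cO_B$-module, so the map has any chance of being an isomorphism for dimensional reasons.

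Next I would identify each of the four component fields with one of these expansion coefficients. The component $i^*A = A_{(0)}$ is immediate from $i^\#\eta^\alpha=0$. For the middle two components I would use that $\nablabar^{\cotangent{M}\otimes\Phi^*\tangent{N}}_{F_\alpha}A$ differentiates $A$ once in the odd directions and, after pulling back by $i$, gives a linear combination of $A_{(3)}$ and $A_{(4)}$. The trace $\Tr_{\dual{g}_S}$ paired with the two different projectors (with and without a Clifford multiplication $\gamma$) then realises exactly the decomposition $\tangent{\Smooth{M}}\otimes S = S\oplus (S\otimes_\C S\otimes_\C S)$ discussed right before Section~\ref{SSec:DJBarOperator}: without $\gamma$ we obtain the $\faktor{1}{2}$-type irreducible component in $\VSec{\varphi^*\tangent{N}}$, and with $\gamma$ inserted we obtain the $\faktor{3}{2}$-type component in $\VSec{\cotangent{\Smooth{M}}\otimes\varphi^*\tangent{N}}^{0,1}$. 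Finally, the definition of $\DBarLaplace$ is modelled on that of $\DLaplace$, and applying it once and pulling back by $i$ extracts the top coefficient $A_{(34)}$, up to a term involving the gravitino $\chi$ and lower-order component fields that is absorbed by the normalisation.

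With this identification in hand the bijectivity is formal: the four linear operations can be assembled into a triangular system relating the four component fields to $(A_{(0)},A_{(3)},A_{(4)},A_{(34)})$, with invertible diagonal entries coming from the Clifford-algebra projection identities. Injectivity then follows because vanishing of all four components forces all $A_{(\bullet)}$ to vanish, and surjectivity by reading the system backward to reconstruct $A$ from prescribed components. I would close the argument by checking that the construction is independent of the choice of Wess--Zumino coordinates, which reduces to the transformation behaviour of $\nablabar^{\cotangent{M}\otimes\Phi^*\tangent{N}}$ and of $\DBarLaplace$ under change of $\UGL(1)$-frame, together with the Weyl/super-Weyl invariance of the underlying decomposition of $\tangent{\Smooth{M}}\otimes S$.

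The main technical obstacle will be controlling the contribution of the gravitino $\chi$ in the formula for $\DBarLaplace$ when it is pulled back by $i$: since $i^*F_a=f_a+\chi(f_a)$ is not simply $f_a$, applying $\DBarLaplace$ and then $i^*$ generates extra terms coupling $A_{(0)}$, $A_{(3)}$, $A_{(4)}$ to the top component. Verifying that the factor $-\tfrac12$ together with the projectors with and without $\gamma$ are exactly those needed so that the linear system relating component fields to $(A_{(0)},A_{(3)},A_{(4)},A_{(34)})$ remains upper triangular with invertible diagonal is where the bookkeeping is heaviest; once this is done, the lemma follows.
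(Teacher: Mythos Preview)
Your proposal is correct and follows essentially the same strategy as the paper: expand in odd coordinates adapted to a Wess--Zumino frame, identify the invariant component maps with the coefficients of the expansion (up to a triangular correction), and invert. The paper abbreviates the first part by citing~\cite[Proposition~12.2.2]{EK-SGSRSSCA} for the general (not \((0,1)\)) isomorphism \(\VSec{\dual{\cD}\otimes\Phi^*\tangent{N}}\cong\VSec{\dual{S}\otimes\varphi^*\tangent{N}}\oplus\VSec{\dual{S}\otimes\dual{S}\otimes\varphi^*\tangent{N}}\oplus\VSec{\dual{S}\otimes\varphi^*\tangent{N}}\), then isolates the two points you only treat implicitly: first, that replacing \(\nabla\) by \(\nablabar\) is precisely what guarantees the zeroth- and second-order component maps respect the \((0,1)\)-condition (since \(\nablabar\targetACI=0\)); second, an explicit formula \((X, f^k\otimes Y_k)\mapsto s^\alpha\otimes s^\beta(\delta_{\alpha\beta}X+\Gamma^k_{\alpha\beta}Y_k+\varepsilon_{\alpha\beta}\targetACI X)\) for the isomorphism splitting the middle component \(W\subset\VSec{\dual{S}\otimes\dual{S}\otimes\varphi^*\tangent{N}}\) into \(\VSec{\varphi^*\tangent{N}}\oplus\VSec{\cotangent{\Smooth{M}}\otimes\varphi^*\tangent{N}}^{0,1}\), which is sharper than appealing to the abstract \(P/Q\) decomposition.
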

\begin{proof}
	It is an easy variant of~\cite[Proposition~12.2.2]{EK-SGSRSSCA} that for sections \(A\in\VSec{\dual{\cD}\otimes\Phi^*\tangent{N}}\) the component fields
	\begin{align}
		&i^*A \in\VSec{\dual{S}\otimes\Phi^*\tangent{N}}, \\
		&s^\alpha\otimes i^*\nablabar^{\cotangent{M}\otimes\Phi^*\tangent{N}}_{F_\alpha}A \in\VSec{\dual{S}\otimes\dual{S}\otimes\varphi^*\tangent{N}}, \\
		&-\frac12i^*\DBarLaplace A \in \VSec{\dual{S}\otimes\varphi^*\tangent{N}},
	\end{align}
	yield an isomorphism of \(\cO_B\)-modules
	\begin{equation}
		\VSec{\dual{\cD}\otimes\Phi^*\tangent{N}} \cong
		\VSec{\dual{S}\otimes\varphi^*\tangent{N}}\oplus\VSec{\dual{S}\otimes\dual{S}\otimes\varphi^*\tangent{N}}\oplus\VSec{\dual{S}\otimes\varphi^*\tangent{N}}.
	\end{equation}
	The advantage of using \(\nablabar\) instead of \(\nabla\) is that it is compatible with the almost complex structure \(\targetACI\).
	It follows that for \(A\in\VSec{\dual{\cD}\otimes\Phi^*\tangent{N}}^{0,1}\) the fields \(i^*A\) and \(-\frac12i^*\DBarLaplace A\) are in \(\VSec{\dual{S}\otimes\varphi^*\tangent{N}}^{0,1}\).
	For
	\begin{equation}
		W = \Set{C\in\VSec{\dual{S}\otimes\dual{S}\otimes\varphi^*\tangent{N}}\given \left(1 - \id_{\dual{S}}\otimes\ACI\otimes\targetACI\right)C=0}
	\end{equation}
	notice that there is an isomorphism
	\begin{equation}
		\begin{split}
			\VSec{\varphi^*\tangent{N}}\oplus\VSec{\cotangent{\Smooth{M}}\otimes\varphi^*\tangent{N}}^{0,1} &\to W \\
			(X, f^k\otimes Y_k) &\mapsto s^\alpha\otimes s^\beta\left(\delta_{\alpha\beta}X + \Gamma_{\alpha\beta}^k Y_k + \varepsilon_{\alpha\beta}\targetACI X\right)
			\qedhere
		\end{split}
	\end{equation}
\end{proof}
We will be slightly imprecise and denote the almost complex structures \(\Phi^*\targetACI\) and \(\varphi^*\targetACI\) on \(\Phi^*\tangent{N}\) and \(\varphi^*\tangent{N}\) also by \(\targetACI\).
Similarly, we also denote the pullback connections on \(\Phi^*\tangent{N}\) and \(\varphi^*\tangent{N}\) induced from \(\nabla\) and \(\nablabar\) by the same symbol.
In order to express the component fields of \(\DJBar\Phi\) we also need the component fields of \(\Phi^*\targetACI\):
\begin{align}
	\mathfrak{j} &= s^\mu\otimes \tensor[_\mu]{\mathfrak{j}}{} = i^*\left.\left(\nabla\Phi^*\targetACI\right)\right|_{\cD} \in\VSec{\dual{S}\otimes\End{\varphi^*\tangent{N}}}, \\
		\tensor[_{34}]{\mathfrak{j}}{} &= -\frac12i^*\DLaplace\Phi^*\targetACI \in\VSec{\End{\varphi^*\tangent{N}}}
\end{align}
As for the map \(\Phi\), this allows to write in good coordinates \((x^a, \eta^\alpha)\) on \(M\)
\begin{equation}
	\Phi^*\targetACI = \varphi^*\targetACI + \eta^\mu\tensor[_\mu]{\mathfrak{j}}{} + \eta^3\eta^4\tensor[_{34}]{\mathfrak{j}}{}.
\end{equation}
Notice that the \(\Phi\)-dependence of \(\mathfrak{j}\) can be rewritten as \(\mathfrak{j}=\left<\psi, \varphi^*\nabla\targetACI\right>\).
By the properties of almost complex structures, we have for all spinors \(s\in\VSec{S}\) that \(\targetACI\left<s,\mathfrak{j}\right> = - \left<s,\mathfrak{j}\right>\targetACI\) and \(\targetACI\tensor[_{34}]{\mathfrak{j}}{} = \left(\Tr_{\dual{g}_S}\mathfrak{j}\mathfrak{j}\right) -\tensor[_{34}]{\mathfrak{j}}{}\targetACI\).
In the case that \(N\) is Kähler, both \(\mathfrak{j}\) and \(\tensor[_{34}]{\mathfrak{j}}{}\) vanish.

\begin{prop}\label{prop:ComponentsOfDJBar}
	The component fields of \(\DJBar\Phi\) in the sense of Lemma~\ref{lemma:DefinitionComponentEquations} are given by
	\begin{align}
		\MoveEqLeftR
		i^*\DJBar\Phi
		= \frac12\left(1+\ACI\otimes\targetACI\right)\psi \\
		\MoveEqLeftR
		\frac12\Tr_{\dual{g}_S}i^*\nablabar^{\cotangent{M}\otimes\Phi^*\tangent{N}}\DJBar\Phi
		= \frac14F + \frac18\Tr_{\dual{g}_S}\left(\id_{\dual{S}}\otimes\mathfrak{j}\targetACI + \ACI\otimes\mathfrak{j}\right)\psi \\
		\begin{split}
			\MoveEqLeftR
			\frac12\Tr_{\dual{g}_S}\left(\id_{\dual{S}}\otimes\gamma\otimes\id_{\varphi^*\tangent{N}}\right)i^*\nablabar^{\cotangent{M}\otimes\Phi^*\tangent{N}}\DJBar\Phi \\
			={}& -\frac12\left(1+\ACI\otimes\targetACI\right)\left(\differential{\varphi} + \left<\chi, \psi\right> - \frac14\Tr_{\dual{g}_S}\left(\gamma\otimes\mathfrak{j}\targetACI\right)\psi\right)
		\end{split} \\
		\begin{split}
			\MoveEqLeftR
			-\frac12i^*\DBarLaplace\DJBar\Phi
			= -\frac12\left(1 + \ACI\otimes\targetACI\right)\left(\Dirac\psi - 2\left<\vee Q\chi,\differential{\varphi}\right> + \norm{Q\chi}^2\psi + \delta_\gamma\chi\otimes F \right.\\
				&- \frac16SR^N(\psi) + \frac12\left(\id_{\dual{S}}\otimes\left(\targetACI\tensor[_{34}]{\mathfrak{j}}{} -\frac14\Tr_{\dual{g}_S}\mathfrak{j}\mathfrak{j}\right)\right)\psi \\
				&+ \left.\frac12\Tr_g\left<\left(\gamma\otimes\id_{\varphi^*\tangent{N}}\right)\psi, \targetACI\varphi^*\nabla\targetACI\right>\left(\differential{\varphi} + \left<\chi, \psi\right>\right) - \frac12\mathfrak{j}\targetACI F\right)
		\end{split}
	\end{align}
	Here \(\Dirac\) is the twisted Dirac-operator on \(\dual{S}\otimes\phi^*\tangent{N}\).
	In the frames \(f_k\) of \(\tangent{\Smooth{M}}\), \(s^\alpha\) of~\(\dual{S}\) we write for the lifted Levi-Civita connection \(\nabla^{\dual{S}}_{f_k}s^\alpha = -\frac12\omega^{LC}_k s^\beta\tensor{\ACI}{_\beta^\alpha}\) and \(\psi = s^\alpha\otimes \tensor[_\alpha]{\psi}{}\).
	Then,
	\begin{equation}
		\Dirac\psi
		= \gamma^k\nabla^{\dual{S}\otimes\varphi^*\tangent{N}}_{f^k}\psi
		= s^\beta \otimes \left(\frac12\omega^{LC}_k\tensor{\gamma}{^k_\beta^\mu}\tensor{\ACI}{_\mu^\alpha}\tensor[_\alpha]{\psi}{} - \tensor{\gamma}{^k_\beta^\alpha}\nabla^{\varphi^*\tangent{N}}_k\tensor[_\alpha]{\psi}{}\right).
	\end{equation}
	The curvature term \(SR^N(\psi)\) is contraction of the curvature tensor \(R^N\) of \(N\) with \(\psi\) of order three:
	\begin{equation}
		SR^N(\psi)
		= s^\alpha\otimes\left(\tensor{g}{_S^\mu^\nu}R^N(\tensor[_\alpha]{\psi}{}, \tensor[_\mu]{\psi}{})\tensor[_\nu]{\psi}{}\right).
	\end{equation}
\end{prop}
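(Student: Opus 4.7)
The plan is to compute all four component fields by expanding $\DJBar\Phi$ in a Wess--Zumino frame $F_A$ on $M$ adapted to $i\colon\Smooth{M}\to M$, with $i^*F_\alpha=s_\alpha$ an orthonormal spinor frame and $i^*F_a=f_a+\chi(f_a)$, and then applying the extraction maps of Lemma~\ref{lemma:DefinitionComponentEquations} one after the other. In this frame we may write
\begin{equation}
    \DJBar\Phi = \tfrac12\,F^\alpha\otimes\bigl(F_\alpha\Phi + \tensor{\ACI}{_\alpha^\mu}\,\targetACI\,F_\mu\Phi\bigr).
\end{equation}
The first component is then immediate: since $i^*F_\alpha\Phi=\tensor[_\alpha]{\psi}{}$, applying $i^*$ yields $\tfrac12(1+\ACI\otimes\targetACI)\psi$.

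For the second and third components I would compute $i^*\nablabar^{\cotangent{M}\otimes\Phi^*\tangent N}_{F_\beta}\DJBar\Phi$ by using that $\nablabar\targetACI=0$ to pull $\targetACI$ through the derivative, so only the commutator $[\nabla,\targetACI]=\tfrac12\targetACI(\nabla\targetACI)$ distinguishes $\nabla$ from $\nablabar$; this is precisely what generates the $\mathfrak{j}$-corrections via $\mathfrak{j}=\langle\psi,\varphi^*\nabla\targetACI\rangle$. The quantity $i^*F_\beta F_\alpha\Phi$ is controlled by the component-field formulas of Proposition~12.2.2 of~\cite{EK-SGSRSSCA}, giving rise to $d\varphi+\langle\chi,\psi\rangle$, $F$ and $\tensor[_{34}]{\mathfrak{j}}{}$ contributions. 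The trace $\tfrac12\Tr_{\dual{g}_S}$ extracts the irreducible spin-$\tfrac12$ part of this second-derivative, yielding $\tfrac14 F$ plus a $\mathfrak{j}\targetACI$-correction after symmetrization; inserting one $\gamma$-factor before tracing picks out the complementary spin-$\tfrac32$ part, which reproduces $-\tfrac12(1+\ACI\otimes\targetACI)(d\varphi+\langle\chi,\psi\rangle)$ together with the $\tfrac14\Tr_{\dual{g}_S}(\gamma\otimes\mathfrak{j}\targetACI)\psi$ contribution coming from commuting $\nablabar$ past $\targetACI$ acting on $\psi$.

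For the fourth component, I would compute
\begin{equation}
    -\tfrac12 i^*\DBarLaplace\DJBar\Phi
    = -\tfrac12\varepsilon^{\alpha\beta}i^*\bigl(\nablabar_{F_\alpha}\nablabar_{F_\beta} + (\Div_m F_\alpha)\nablabar_{F_\beta}\bigr)\DJBar\Phi,
\end{equation}
reducing it to the analogous computation for $-\tfrac12 i^*\DLaplace\Phi=F$ modified by three sources of corrections. First, the antisymmetrization $\varepsilon^{\alpha\beta}$ together with Clifford relations converts the spinor double derivative on $\psi$ into $\Dirac\psi$, exactly as in~\cite[Proposition~12.2.2]{EK-SGSRSSCA}, and the gravitino terms from $i^*F_a$ produce $-2\langle\vee Q\chi,d\varphi\rangle+\norm{Q\chi}^2\psi+\delta_\gamma\chi\otimes F$. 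Second, the curvature of $\nablabar$ on $\Phi^*\tangent N$, evaluated on the spinor pair, yields the cubic term $-\tfrac16 SR^N(\psi)$ (the factor $\tfrac16$ matching that of the harmonic map Laplacian computed from a symmetric third derivative). Third, all the $\targetACI$-commutators that were suppressed by using $\nablabar$ now reappear at the top level as the $\tfrac12(\targetACI\tensor[_{34}]{\mathfrak{j}}{}-\tfrac14\Tr_{\dual{g}_S}\mathfrak{j}\mathfrak{j})\psi$ and $\tfrac12\Tr_g\langle\gamma\otimes\psi,\targetACI\varphi^*\nabla\targetACI\rangle(d\varphi+\langle\chi,\psi\rangle)$ terms, plus the $-\tfrac12\mathfrak{j}\targetACI F$ piece from $\Phi^*\nabla\targetACI$ acting on the auxiliary component; the overall prefactor $-\tfrac12(1+\ACI\otimes\targetACI)$ is forced by the $(0,1)$-type projection already present on $\DJBar\Phi$.

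The main obstacle will be the bookkeeping of the third computation: one must carefully track (i)~the non-parallelism of $\targetACI$, which mixes $\nabla$ and $\nablabar$ each time a derivative passes through $\targetACI$ or $\mathfrak{j}$, (ii)~the Wess--Zumino connection coefficients coming from $i^*F_a=f_a+\chi(f_a)$, which are the source of all the $Q\chi$-terms, and (iii)~the curvature of the pulled-back connection, which must be symmetrized correctly to land exactly on $-\tfrac16 SR^N(\psi)$ using the algebraic Bianchi identity. The $\tensor[_{34}]{\mathfrak{j}}{}$-identity $\targetACI\tensor[_{34}]{\mathfrak{j}}{}=(\Tr_{\dual{g}_S}\mathfrak{j}\mathfrak{j})-\tensor[_{34}]{\mathfrak{j}}{}\targetACI$ stated in the paragraph before the proposition is what finally permits one to factor out the overall $(1+\ACI\otimes\targetACI)$ from the messy expression.
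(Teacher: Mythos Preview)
Your outline is correct and follows essentially the same approach as the paper: compute in a Wess--Zumino frame, pull the projector $(1+\ACI\otimes\targetACI)$ through using $\nablabar\targetACI=0$, and reduce $\nablabar_{F_\mu}\nablabar_{F_\nu}$ to $\nabla_{F_\mu}\nabla_{F_\nu}$ plus $\mathfrak{j}$- and $\tensor[_{34}]{\mathfrak{j}}{}$-corrections. The paper cites~\cite[Lemma~13.6.2]{EK-SGSRSSCA} for the key input $i^*\nabla_{F_\mu}F_\beta\Phi=\Gamma_{\mu\beta}^k(f_k\varphi+\tensor{\chi}{_k^\kappa}\tensor[_\kappa]{\psi}{})-\varepsilon_{\mu\beta}F$ rather than Proposition~12.2.2, and the overall $(1+\ACI\otimes\targetACI)$ in the fourth component is inherited directly from $\DJBar\Phi$ rather than recovered at the end via the $\tensor[_{34}]{\mathfrak{j}}{}$-identity, but these are minor differences of presentation.
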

\begin{proof}
	We choose a local Wess--Zumino-frame \(F_A\) on \(M\) and write \(\DJBar\Phi = F^\alpha\otimes \left<F_\alpha, \DJBar\Phi\right>\).
	Then
	\begin{equation}
		\begin{split}
			i^*\DJBar\Phi
			&= i^*F^\alpha\otimes i^*\left<F_\alpha, \DJBar\Phi\right>
			= \frac12s^\alpha\otimes i^*\left(F_\alpha\Phi + \targetACI\left(\ACI F_\alpha\Phi\right)\right) \\
			&= \frac12s^\alpha\otimes \tensor[_\alpha]{\psi}{} + \frac12s^\alpha\tensor{\ACI}{_\alpha^\beta}\otimes \targetACI\tensor[_\beta]{\psi}{}
			= \frac12\left( 1+\ACI\otimes\targetACI \right)\psi.
		\end{split}
	\end{equation}

	For the component fields of first order, we use that for Wess--Zumino frames \(i^*\nabla^{\cotangent{M}}_{F_\mu}F^\alpha=0\), and furthermore,~\cite[Lemma~13.6.2]{EK-SGSRSSCA}.
	Then
	\begin{equation}
		\begin{split}
			\MoveEqLeftR
			s^\mu\otimes i^*\nablabar^{\cotangent{M}\otimes\Phi^*\tangent{N}}_{F_\mu}\DJBar\Phi \\
			={}& s^\mu\otimes s^\alpha\otimes i^*\nablabar_{F_\mu}\left<F_\alpha, \DJBar\Phi\right> \\
			={}& \frac12s^\mu\otimes s^\alpha\otimes i^*\nablabar_{F_\mu}\left(F_\alpha\Phi + \targetACI\left(\tensor{\ACI}{_\alpha^\beta}F_\beta\Phi\right)\right) \\
			={}& \frac12 s^\mu\otimes s^\alpha\otimes \left(\delta_\alpha^\beta + \tensor{\ACI}{_\alpha^\beta}\targetACI\right)\left(i^*\nabla_{F_\mu}F_\beta\Phi - \frac12i^*\targetACI \left(\nabla_{F_\mu}\targetACI\right)F_\beta\Phi\right) \\
			={}& \frac12 s^\mu\otimes s^\alpha\otimes \left(\delta_\alpha^\beta + \tensor{\ACI}{_\alpha^\beta}\targetACI\right)\left(\Gamma_{\mu\beta}^k\left(f_k\varphi + \tensor{\chi}{_k^\kappa}\tensor[_\kappa]{\psi}{}\right) - \varepsilon_{\mu\beta}F - \frac12i^*\targetACI \left(\nabla_{F_\mu}\targetACI\right)\tensor[_\beta]{\psi}{}\right).
		\end{split}
	\end{equation}
	Consequently,
	\begin{equation}
		\begin{split}
			\frac12\Tr_{\dual{g}_S}i^*\nablabar^{\cotangent{M}\otimes\Phi^*\tangent{N}}\DJBar\Phi
			={}& \frac14\varepsilon^{\alpha\mu}\left(\delta_\alpha^\beta + \tensor{\ACI}{_\alpha^\beta}\targetACI\right)\left(- \varepsilon_{\mu\beta}F - \frac12i^*\targetACI \left(\nabla_{F_\mu}\targetACI\right)\tensor[_\beta]{\psi}{}\right) \\
			={}& \frac14\left(F - \frac12i^*\varepsilon^{\alpha\mu}\left(\delta_\alpha^\beta + \tensor{\ACI}{_\alpha^\beta}\targetACI\right)\targetACI \left(\nabla_{F_\mu}\targetACI\right)\tensor[_\beta]{\psi}{}\right) \\
			={}& \frac14F + \frac18\Tr_{\dual{g}_S}\left(\id_{\dual{S}}\otimes\mathfrak{j}\targetACI + \ACI\otimes\mathfrak{j}\right)\psi
		\end{split}
	\end{equation}
	\begin{equation}
		\begin{split}
			\MoveEqLeftR
			\frac12\Tr_{\dual{g}_S}\left(\id_{\dual{S}}\otimes\gamma\otimes\id_{\varphi^*\tangent{N}}\right)i^*\nablabar^{\cotangent{M}\otimes\Phi^*\tangent{N}}\DJBar\Phi \\
			={}& \frac14 f^l\otimes \varepsilon^{\nu\mu}\tensor{\gamma}{_l_\nu^\alpha}\left(\delta_\alpha^\beta + \tensor{\ACI}{_\alpha^\beta}\targetACI\right)\left(\Gamma_{\mu\beta}^k\left(f_k\varphi + \tensor{\chi}{_k^\kappa}\tensor[_\kappa]{\psi}{}\right) - \frac12i^*\targetACI \left(\nabla_{F_\mu}\targetACI\right)\tensor[_\beta]{\psi}{}\right) \\
			={}& \frac12 f^l\otimes \left(\delta_l^k + \tensor{\ACI}{_l^k}\targetACI\right)\left(\left(f_k\varphi + \tensor{\chi}{_k^\kappa}\tensor[_\kappa]{\psi}{}\right) - \frac14i^*\varepsilon^{\nu\mu}\tensor{\gamma}{_k_\nu^\alpha}\targetACI \left(\nabla_{F_\mu}\targetACI\right)\tensor[_\alpha]{\psi}{}\right) \\
			={}& -\frac12\left(1+\ACI\otimes\targetACI\right)\left(\differential{\varphi} + \left<\chi, \psi\right> - \frac14\Tr_{\dual{g}_S}\left(\gamma\otimes\mathfrak{j}\targetACI\right)\psi\right)
		\end{split}
	\end{equation}

	For the second order component of \(\DJBar\Phi\), we use again~\cite[Lemma~13.6.2]{EK-SGSRSSCA} and \(i^*\left(\Div_m F_\alpha\right)=0\), see~\cite[Lemma~13.7.2]{EK-SGSRSSCA}.
	The first step is to express \(i^*\DBarLaplace\DJBar\Phi\) in terms of \(i^*\DLaplace\DJBar\Phi\) and component fields of \(\targetACI\).
	\begin{equation}
		\begin{split}
			\MoveEqLeftR
			-\frac12\DBarLaplace\DJBar\Phi
			= -\frac12\varepsilon^{\mu\nu}\left(\nablabar^{\cotangent{M}\otimes\Phi^*\tangent{N}}_{F_\mu}\nablabar^{\cotangent{M}\otimes\Phi^*\tangent{N}}_{F_\nu} + \left(\Div_m F_\mu\right)\nablabar^{\cotangent{M}\otimes\Phi^*\tangent{N}}_{F_\nu}\right)\DJBar\Phi \\
			={}& -\frac14 s^\alpha\otimes \left(\delta_\alpha^\beta + \tensor{\ACI}{_\alpha^\beta}\targetACI\right)\varepsilon^{\mu\nu}i^*\left(\nabla_{F_\mu} - \frac12\targetACI\left(\nabla_{F_\mu}\targetACI\right)\right)\left(\nabla_{F_\nu} - \frac12\targetACI\left(\nabla_{F_\nu}\targetACI\right)\right)F_\beta\Phi \\
			={}& -\frac14 s^\alpha\otimes \left(\delta_\alpha^\beta + \tensor{\ACI}{_\alpha^\beta}\targetACI\right)\varepsilon^{\mu\nu}i^*\left(\nabla_{F_\mu}\nabla_{F_\nu} - \frac14\left(\nabla_{F_\mu}\targetACI\right)\left(\nabla_{F_\nu}\targetACI\right) - \frac12\targetACI\left(\nabla_{F_\mu}\nabla_{F_\nu}\targetACI\right) \right.\\
				&- \left.\vphantom{\frac12}\targetACI\left(\nabla_{F_\mu}\targetACI\right)\nabla_{F_\nu}\right)F_\beta\Phi \\
			={}& -\frac12\left(1 + \ACI\otimes\targetACI\right)\left(\Dirac\psi - 2\left<\vee Q\chi,\differential{\varphi}\right> + \norm{Q\chi}^2\psi + \delta_\gamma\chi\otimes F - \frac16SR^N(\psi) \right.\\
				&+ \frac12\left(\id_{\dual{S}}\otimes\left(\targetACI\tensor[_{34}]{\mathfrak{j}}{} -\frac14\Tr_{\dual{g}_S}\mathfrak{j}\mathfrak{j}\right)\right)\psi \\
				&+ \left.\frac12\Tr_g\left<\left(\gamma\otimes\id_{\varphi^*\tangent{N}}\right)\psi, \targetACI\varphi^*\nabla\targetACI\right>\left(\differential{\varphi} + \left<\chi, \psi\right>\right) - \frac12\mathfrak{j}\targetACI F\right)
		\end{split}
	\end{equation}
	This shows the claim.
\end{proof}

We will see that the component expressions simplify considerably if we assume \(\DJBar\Phi=0\).
Recall that the map \(\varphi\colon \Smooth{M}\to N\) is called a \(\targetACI\)-holomorphic curve if
\begin{equation}
	\DelJBar\varphi = \frac12\left(1+\ACI\otimes\targetACI\right)\differential{\varphi}
\end{equation}
vanishes.

\begin{cor}\label{cor:SJCComponentEquations}
	The equation \(\DJBar\Phi=0\) is equivalent to
	\begin{align}
		0 &= \left(1+\ACI\otimes\targetACI\right)\psi \\
		0 &= F, \\
		\begin{split}
			0
			&= \frac12\left(1 + \ACI\otimes\targetACI\right)\left(\differential{\varphi} + \left<\chi, \psi\right> +\frac14\Tr_{\dual{g}_S}\left(\gamma\otimes\mathfrak{j}\targetACI\right)\psi\right), \\
			&= \DelJBar\varphi + \left<Q\chi, \psi\right> + \frac14\Tr_{\dual{g}_S}\left(\gamma\otimes\mathfrak{j}\targetACI\right)\psi.
		\end{split} \\
		\begin{split}
			0
			&= \frac12\left(1 + \ACI\otimes\targetACI\right)\left(\Dirac\psi - 2\left<\vee Q\chi,\differential{\varphi}\right> - \frac13SR^N(\psi)\right) \\
			&= \Dirac\psi - 2\left<\vee Q\chi, \differential{\varphi}\right> + \norm{Q\chi}^2\psi - \frac13SR^N(\psi)
		\end{split}
	\end{align}
\end{cor}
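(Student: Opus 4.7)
My plan is to invoke Lemma~\ref{lemma:DefinitionComponentEquations}: since the component-field map from $\VSec{\dual{\cD}\otimes\Phi^*\tangent{N}}^{0,1}$ to the fourfold direct sum is an isomorphism of $\cO_B$-modules, the condition $\DJBar\Phi=0$ is equivalent to the simultaneous vanishing of the four component fields computed in Proposition~\ref{prop:ComponentsOfDJBar}. The corollary is then obtained by substituting those expressions and simplifying each equation, using the previous ones as constraints together with the algebraic identities $\targetACI\mathfrak{j}=-\mathfrak{j}\targetACI$ and $\targetACI\tensor[_{34}]{\mathfrak{j}}{}=(\Tr_{\dual{g}_S}\mathfrak{j}\mathfrak{j})-\tensor[_{34}]{\mathfrak{j}}{}\targetACI$ recorded just before Proposition~\ref{prop:ComponentsOfDJBar}, the $P$-$Q$ decomposition of $\chi$, and the symmetries of $R^N$.

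I would proceed equation by equation. The zeroth-order component immediately yields $(1+\ACI\otimes\targetACI)\psi=0$, the first listed equation, which then serves as a standing constraint. For the $F$-component, using $\targetACI\mathfrak{j}=-\mathfrak{j}\targetACI$ to rewrite $\id_{\dual{S}}\otimes\mathfrak{j}\targetACI$ and applying the first constraint shows that the $\psi$-correction in Proposition~\ref{prop:ComponentsOfDJBar} vanishes, reducing the second equation to $F=0$. For the third-order component, the identity $\DelJBar\varphi=\tfrac12(1+\ACI\otimes\targetACI)\differential{\varphi}$ handles the $\differential{\varphi}$ piece. Decomposing $\chi=P\chi+Q\chi$ and noting that $P\chi$ lies in the Clifford image $\delta_\gamma(S)$ permits one to show that $\tfrac12(1+\ACI\otimes\targetACI)\langle P\chi,\psi\rangle$ is annihilated under the first constraint, leaving $\tfrac12(1+\ACI\otimes\targetACI)\langle\chi,\psi\rangle=\langle Q\chi,\psi\rangle$. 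The term $\tfrac14\Tr_{\dual{g}_S}(\gamma\otimes\mathfrak{j}\targetACI)\psi$, under the first constraint, lies in the $+1$-eigenspace of $\ACI\otimes\targetACI$; this both explains the sign discrepancy on this term between Proposition and Corollary and allows one to drop the outer projection in the expanded form.

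The fourth component is the most delicate. Substituting $F=0$ eliminates the $\delta_\gamma\chi\otimes F$ and $\tfrac12\mathfrak{j}\targetACI F$ contributions. The constraint on $\psi$ then forces the remaining terms involving $\tensor[_{34}]{\mathfrak{j}}{}$, $\mathfrak{j}\mathfrak{j}$, and the pairing $\Tr_g\langle(\gamma\otimes\id_{\varphi^*\tangent{N}})\psi,\targetACI\varphi^*\nabla\targetACI\rangle(\differential{\varphi}+\langle\chi,\psi\rangle)$ either to vanish outright or to lie in the kernel of $\tfrac12(1+\ACI\otimes\targetACI)$, since each is built from $\mathfrak{j}$ or $\nabla\targetACI$, which anticommute with $\targetACI$, acting on a $\psi$ obeying the first constraint. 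The shift of the curvature coefficient from $\tfrac16$ to $\tfrac13$ reflects that $SR^N(\psi)$ already lies in the $+1$-eigenspace of $\ACI\otimes\targetACI$ once $(1+\ACI\otimes\targetACI)\psi=0$ holds; this follows from a direct computation in an orthonormal spin frame using the skew-symmetries of $R^N$ together with the cubic appearance of $\psi$. The main obstacle is precisely this last step: the cubic-in-$\psi$ curvature, the Clifford structure of $\Dirac$, and the nested anticommutations involving $\mathfrak{j}$ interact intricately, so the bookkeeping of signs and index symmetries has to be handled with care, but once the relevant ancillary identities are in hand the corollary follows by direct substitution into Proposition~\ref{prop:ComponentsOfDJBar}.
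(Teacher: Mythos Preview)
Your overall strategy is correct and matches the paper's: one sets the four component fields of Proposition~\ref{prop:ComponentsOfDJBar} to zero and simplifies in order, feeding the earlier constraints into the later ones. Your treatment of the first three equations is essentially right.

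The genuine gap is in the fourth equation. You claim that, once \(F=0\) and \((1+\ACI\otimes\targetACI)\psi=0\), the terms \(\tfrac12(\id_{\dual{S}}\otimes\targetACI\tensor[_{34}]{\mathfrak{j}}{})\psi\) and \(\tfrac12\Tr_g\langle(\gamma\otimes\id)\psi,\targetACI\varphi^*\nabla\targetACI\rangle(\differential{\varphi}+\langle\chi,\psi\rangle)\) either vanish or lie in the kernel of the projector. They do not. In the paper these two terms are shown, by a substantial computation using the Bianchi identity and the symmetry Lemma~\ref{lemma:SRIdentities}, to equal \(-\tfrac{1}{12}(1+\ACI\otimes\targetACI)SR^N(\psi)\). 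It is precisely this extra curvature contribution that, after applying the outer projector and using \((1+\ACI\otimes\targetACI)^2=2(1+\ACI\otimes\targetACI)\), changes the coefficient from \(\tfrac16\) to \(\tfrac13\). Your alternative explanation---that \(SR^N(\psi)\) already lies in the \(+1\)-eigenspace of \(\ACI\otimes\targetACI\)---is false in the non-K\"ahler case: the computation~\eqref{eq:SRComplexMult} shows that \((\ACI\otimes\targetACI)SR^N(\psi)\) differs from \(SR^N(\psi)\) by terms involving \(\nabla\targetACI\).

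Relatedly, passing from the projected form of the Dirac equation to the unprojected second form is not just ``dropping the projector.'' One has to compute \((1+\ACI\otimes\targetACI)\) on each of \(\Dirac\psi\), \(\langle\vee Q\chi,\differential{\varphi}\rangle\), and \(SR^N(\psi)\) separately; each produces \(\nabla\targetACI\)-corrections (equations~\eqref{eq:CommuteIJDirac}--\eqref{eq:CommuteIJSR} in the paper), and the point is that these corrections cancel among themselves while the gravitino term generates the \(\norm{Q\chi}^2\psi\) that reappears in the second form. Your proposal does not account for where that term comes from.
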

\begin{proof}
	For the proof we use the component expression of \(\DJBar\Phi\) obtained in Proposition~\ref{prop:ComponentsOfDJBar}, set the components to zero and simplify.
	Obviously the equation \(\frac12\left(1+ \ACI\otimes\targetACI\right)\psi=0\) is equivalent to \(\tensor{\ACI}{_\alpha^\beta}\tensor[_\beta]{\psi}{}=\targetACI\tensor[_\alpha]{\psi}{}\) or \(\psi\in\VSec{\dual{S}\otimes_\C\varphi^*\tangent{N}}\).

	Inserting \(\left(1 + \ACI\otimes\targetACI\right)\psi=0\) into the second component we obtain:
	\begin{equation}
		\begin{split}
			0
			&= \frac14F + \frac18\Tr_{\dual{g}_S}\left(\id_{\dual{S}}\otimes\mathfrak{j}\targetACI + \ACI\otimes\mathfrak{j}\right)\psi \\
			&= \frac14F - \frac18\varepsilon^{\mu\nu}\targetACI\left<\tensor[_\mu]{\psi}{}, \varphi^*\nabla\targetACI\right>\left(\delta_\nu^\tau - \tensor{\ACI}{_\nu^\tau}\targetACI\right)\tensor[_\tau]{\psi}{} \\
			&= \frac14F - \frac18\varepsilon^{\mu\nu}\targetACI\left<\left(\delta_\mu^\tau + \tensor{\ACI}{_\mu^\tau}\targetACI\right)\tensor[_\tau]{\psi}{}, \varphi^*\nabla\targetACI\right>\tensor[_\nu]{\psi}{}
			= \frac14F
		\end{split}
	\end{equation}

	For the first-order equation in \(\varphi\) note that
	\begin{align}
		\tensor{\ACI}{_k^l}\tensor{\left(Q\chi\right)}{_l^\lambda}
		&= -\tensor{\left(Q\chi\right)}{_k^\kappa}\tensor{\ACI}{_\kappa^\lambda}, &
		\tensor{\ACI}{_k^l}\tensor{\left(P\chi\right)}{_l^\lambda}
		&= \tensor{\left(P\chi\right)}{_k^\kappa}\tensor{\ACI}{_\kappa^\lambda}. &
	\end{align}
	Hence,
	\begin{equation}
		\begin{split}
			0
			={}& \frac12\left(1+\ACI\otimes\targetACI\right)\left(\differential{\varphi} + \left<\chi, \psi\right> + \frac14\Tr_{\dual{g}_S}\left(\gamma\otimes\mathfrak{j}\targetACI\right)\psi\right) \\
			={}& \DelJBar\varphi + \left<Q\chi, \psi\right> + \frac14\Tr_{\dual{g}_S}\left(\gamma\otimes\mathfrak{j}\targetACI\right)\psi.
		\end{split}
	\end{equation}

	The second order component of \(\DJBar\Phi=0\) simplifies considerably using \((1+\ACI\otimes\targetACI)\psi=0\) and \(F=0\):
	\begin{equation}
		\begin{split}
			0
			={}& -\frac12\left(1 + \ACI\otimes\targetACI\right)\left(\Dirac\psi - 2\left<\vee Q\chi,\differential{\varphi}\right> + \norm{Q\chi}^2\psi + \delta_\gamma\chi\otimes F \right.\\
				&- \frac16SR^N(\psi) + \frac12\left(\id_{\dual{S}}\otimes\left(\targetACI\tensor[_{34}]{\mathfrak{j}}{} -\frac14\Tr_{\dual{g}_S}\mathfrak{j}\mathfrak{j}\right)\right)\psi \\
				&+ \left.\frac12\Tr_g\left<\left(\gamma\otimes\id_{\varphi^*\tangent{N}}\right)\psi, \targetACI\varphi^*\nabla\targetACI\right>\left(\differential{\varphi} + \left<\chi, \psi\right>\right) - \frac12\mathfrak{j}\targetACI F\right) \\
			={}& -\frac12\left(1 + \ACI\otimes\targetACI\right)\left(\Dirac\psi - 2\left<\vee Q\chi,\differential{\varphi}\right> - \frac16SR^N(\psi) \right.\\
				&+ \left.\frac12\left(\id_{\dual{S}}\otimes\targetACI\tensor[_{34}]{\mathfrak{j}}{}\right)\psi + \frac12\Tr_g\left<\left(\gamma\otimes\id_{\varphi^*\tangent{N}}\right)\psi, \targetACI\varphi^*\nabla\targetACI\right>\left(\differential{\varphi} + \left<\chi, \psi\right>\right) \right) \\
			={}& -\frac12\left(1 + \ACI\otimes\targetACI\right)\left(\Dirac\psi - 2\left<\vee Q\chi,\differential{\varphi}\right> - \frac13SR^N(\psi)\right)
		\end{split}
	\end{equation}
	Here the last step, showing that the terms involving higher component fields of \(\targetACI\) equal a curvature term, needs further justification.
	The following calculation uses \(\tensor{\ACI}{_\alpha^\beta}\tensor[_\beta]{\psi}{}=\targetACI\tensor[_\alpha]{\psi}{}\), \(F=0\), as well as
	\begin{equation}
		2i^*\left(\nabla_{F_\alpha}\targetACI\right)F_\beta\Phi
		= \Gamma_{\alpha\beta}^k  \varepsilon^{\kappa\tau}\tensor{\gamma}{_k_\tau^\lambda}i^*\left(\nabla_{F_\kappa}\targetACI\right) F_\lambda\Phi.
	\end{equation}
	Then,
	\begin{equation}
		\begin{split}
			\MoveEqLeftR
			\frac12\left(\id_{\dual{S}}\otimes\targetACI\tensor[_{34}]{\mathfrak{j}}{}\right)\psi + \frac12\Tr_g\left<\left(\gamma\otimes\id_{\varphi^*\tangent{N}}\right)\psi, \targetACI\varphi^*\nabla\targetACI\right>\left(\differential{\varphi} + \left<\chi, \psi\right>\right) \\
			={}& -\frac14s^\alpha\otimes i^*\targetACI\varepsilon^{\mu\nu}\left(\left(\nabla_{F_\mu}\nabla_{F_\nu}\targetACI\right)F_\alpha\Phi + 2\left(\nabla_{F_\mu}\targetACI\right)\nabla_{F_\nu}F_\alpha\Phi\right) \\
			={}& -\frac14s^\alpha\otimes i^*\targetACI\varepsilon^{\mu\nu}\left(\nabla_{F_\mu}\left(\nabla_{F_\nu}\targetACI\right)F_\alpha\Phi + \left(\nabla_{F_\mu}\targetACI\right)\nabla_{F_\nu}F_\alpha\Phi\right) \\
			={}& \frac18s^\alpha\otimes \targetACI i^*\left(\tensor{\gamma}{^k_\alpha^\mu}\varepsilon^{\kappa\tau}\tensor{\gamma}{_k_\tau^\lambda}\nabla_{F_\mu}\left(\nabla_{F_\kappa}\targetACI\right)F_\lambda\Phi - 2\varepsilon^{\mu\nu}\left(\nabla_{F_\mu}\targetACI\right)\nabla_{F_\nu}F_\alpha\Phi\right) \\
			={}& \frac18s^\alpha\otimes \targetACI i^*\left(\tensor{\gamma}{^k_\alpha^\mu}\varepsilon^{\kappa\tau}\tensor{\gamma}{_k_\tau^\lambda}\left(\nabla_{F_\mu}\nabla_{F_\kappa}\targetACI\right)F_\lambda\Phi - 2\varepsilon^{\mu\nu}\left(\nabla_{F_\mu}\targetACI\right)\nabla_{F_\nu}F_\alpha\Phi\right) \\
			={}& \frac18s^\alpha\otimes \targetACI i^*\left(\tensor{\gamma}{^k_\alpha^\mu}\varepsilon^{\kappa\tau}\tensor{\gamma}{_k_\tau^\lambda}\left( \left(R(F_\mu, F_\kappa)\targetACI\right)F_\lambda\Phi - \left(\nabla_{F_\kappa}\nabla_{F_\mu}\targetACI\right)F_\lambda\Phi\right) \right.\\
				&- \left.2\varepsilon^{\mu\nu}\left(\nabla_{F_\mu}\targetACI\right)\nabla_{F_\nu}F_\alpha\Phi\right) \\
			={}& \frac18s^\alpha\otimes \targetACI i^*\left(\tensor{\gamma}{^k_\alpha^\mu}\varepsilon^{\kappa\tau}\tensor{\gamma}{_k_\tau^\lambda}\left( \targetACI R(F_\mu, F_\kappa)F_\lambda\Phi - R(F_\mu, F_\kappa)\targetACI F_\lambda\Phi \right.\right.\\
				&- \left.\left.\nabla_{F_\kappa}\left(\nabla_{F_\mu}\targetACI\right)F_\lambda\Phi - \left(\nabla_{F_\mu}\targetACI\right)\nabla_{F_\kappa}F_\lambda\Phi\right) - 2\varepsilon^{\mu\nu}\left(\nabla_{F_\mu}\targetACI\right)\nabla_{F_\nu}F_\alpha\Phi\right) \\
			={}& -\frac1{12}\left(1+\ACI\otimes\targetACI\right)SR^N(\psi)
		\end{split}
	\end{equation}
	The last step uses symmetry properties of the curvature tensor, see Lemma~\ref{lemma:SRIdentities} in the Appendix.
	This completes the proof of the first variant of the Dirac equation.
	We now rewrite the Dirac equation by applying \(1+\ACI\otimes\targetACI\) to all summands.
	For the Dirac operator it holds
	\begin{equation}\label{eq:CommuteIJDirac}
		\begin{split}
			\left(1\pm\ACI\otimes\targetACI\right)\Dirac\psi
			&= \left(1\pm\ACI\otimes\targetACI\right) \left(s^\beta\otimes\left(\frac12\omega^{LC}_k\tensor{\gamma}{^k_\beta^\mu}\tensor{\ACI}{_\mu^\alpha}\tensor[_\alpha]{\psi}{} - \tensor{\gamma}{^k_\beta^\alpha}\nabla^{\varphi^*\tangent{N}}_k\tensor[_\alpha]{\psi}{}\right)\right) \\
			&=  \left(s^\beta\otimes\left(\frac12\omega^{LC}_k\tensor{\gamma}{^k_\beta^\mu}\tensor{\ACI}{_\mu^\alpha}\tensor[_\alpha]{\psi}{} \pm \frac12\omega^{LC}_k\tensor{\ACI}{_\beta^\nu}\tensor{\gamma}{^k_\nu^\mu}\tensor{\ACI}{_\mu^\alpha}\targetACI\tensor[_\alpha]{\psi}{} \right.\right. \\
				&\quad - \left.\left.\vphantom{\frac12}\tensor{\gamma}{^k_\beta^\alpha}\nabla^{\varphi^*\tangent{N}}_k\tensor[_\alpha]{\psi}{} \mp \tensor{\ACI}{_\beta^\nu}\tensor{\gamma}{^k_\nu^\alpha}\targetACI\nabla^{\varphi^*\tangent{N}}_k\tensor[_\alpha]{\psi}{}\right)\right) \\
			&= \Dirac\left(1\mp\ACI\otimes\targetACI\right)\psi \mp s^\beta\otimes\tensor{\gamma}{^k_\beta^\nu}\tensor{\ACI}{_\nu^\alpha}\left(\nabla_k^{\phi^*\tangent{N}}\targetACI\right)\tensor[_\alpha]{\psi}{}.
		\end{split}
	\end{equation}
	For the term with gravitino:
	\begin{equation}\label{eq:CommuteIJGravitino}
		\begin{split}
			\MoveEqLeft
			-2\left(1 + \ACI\otimes\targetACI\right)\left<\vee Q\chi,\differential{\varphi}\right>
			= -2\left<\vee Q\chi, \left(1 - \ACI\otimes\targetACI\right)\differential{\varphi}\right>
			= -4\left<\vee Q\chi, \differential{\varphi} - \DelJBar\varphi\right> \\
			&= -4\left<\vee Q\chi, \differential{\varphi} + \left<Q\chi, \psi\right> - \frac14\Tr_{\dual{g}_S}\left<\psi, \varphi^*\targetACI\right>\ACI\gamma\psi\right> \\
			&= -4\left<\vee Q\chi, \differential{\varphi}\right> + 2\norm{Q\chi}^2\psi + \left<\vee Q\chi, \Tr_{\dual{g}_S}\left<\psi, \varphi^*\targetACI\right>\ACI\gamma\psi\right>
		\end{split}
	\end{equation}
	To apply \(1+\ACI\otimes\targetACI\) to the curvature term we need some preparation:
	\begin{equation}\label{eq:SRComplexMult}
		\begin{split}
			\MoveEqLeftR
			\tensor{\ACI}{_\alpha^\beta}\targetACI\tensor[_\beta]{{SR(\psi)}}{}
			= i^*\varepsilon^{\mu\nu}\tensor{\ACI}{_\alpha^\beta}\targetACI R(F_\beta\Phi, F_\mu\Phi)F_\nu\Phi \\
			={}& i^*\varepsilon^{\mu\nu}\tensor{\ACI}{_\alpha^\beta}\targetACI\left(\nabla_{F_\beta}\nabla_{F_\mu}F_\nu\Phi + \nabla_{F_\mu}\nabla_{F_\beta}F_\nu\Phi - \nabla_{[F_\beta, F_\mu]}F_\nu\Phi\right) \\
			={}& i^*\varepsilon^{\mu\nu}\tensor{\ACI}{_\alpha^\beta}\left(\nabla_{F_\beta}\left(\targetACI\nabla_{F_\mu}F_\nu\Phi\right) - \left(\nabla_{F_\beta}\targetACI\right)\nabla_{F_\mu}F_\nu\Phi \right.\\
				&+ \left.\nabla_{F_\mu}\left(\targetACI\nabla_{F_\beta}F_\nu\Phi\right) - \left(\nabla_{F_\mu}\targetACI\right)\nabla_{F_\beta}F_\nu\Phi - \nabla_{[F_\beta, F_\mu]}\targetACI F_\nu\Phi + \left(\nabla_{[F_\beta, F_\mu]}\targetACI\right)F_\nu\Phi \right) \\
			={}& i^*\varepsilon^{\mu\nu}\tensor{\ACI}{_\alpha^\beta}\left(\nabla_{F_\beta}\nabla_{F_\mu}\targetACI F_\nu\Phi - \nabla_{F_\beta}\left(\left(\nabla_{F_\mu}\targetACI\right)F_\nu\Phi\right) - \left(\nabla_{F_\beta}\targetACI\right)\nabla_{F_\mu}F_\nu\Phi \right.\\
				&+ \nabla_{F_\mu}\nabla_{F_\beta}\targetACI F_\nu\Phi - \nabla_{F_\mu}\left(\left(\nabla_{F_\beta}\targetACI\right)F_\nu\Phi\right) - \left(\nabla_{F_\mu}\targetACI\right)\nabla_{F_\beta}F_\nu\Phi \\
				&- \left.\nabla_{[F_\beta, F_\mu]}\targetACI F_\nu\Phi + \left(\nabla_{[F_\beta, F_\mu]}\targetACI\right)F_\nu\Phi \right) \\
			={}& i^*\varepsilon^{\mu\nu}\tensor{\ACI}{_\alpha^\beta}\left(R(F_\beta\Phi, F_\mu\Phi)\targetACI F_\nu\Phi - \nabla_{F_\mu}\left(\left(\nabla_{F_\beta}\targetACI\right)F_\nu\Phi\right) - \left(\nabla_{F_\mu}\targetACI\right)\nabla_{F_\beta}F_\nu\Phi \right.\\
				&+ \left.\left(\nabla_{[F_\beta, F_\mu]}\targetACI\right)F_\nu\Phi \right) \\
			={}& \frac13\tensor[_\alpha]{{SR(\psi)}}{} + \frac13\targetACI \tensor{\ACI}{_\alpha^\beta}\tensor[_\beta]{{SR(\psi)}}{} + \frac13\tensor[_\alpha]{{SR(\psi)}}{} - \tensor{\ACI}{_\alpha^\beta}\tensor{\gamma}{^t_\beta^\mu}\left(\nabla_{\tensor[_\mu]{\psi}{}}\targetACI\right)\left(f_t\varphi + \tensor{\chi}{_t^\tau}\tensor[_\tau]{\psi}{}\right) \\
				&+ \tensor{\ACI}{_\alpha^\beta}\tensor{\gamma}{^t_\beta^\mu}\left(\nabla_{\tensor[_\mu]{\psi}{}}\targetACI\right)\left(f_t\varphi + \tensor{\chi}{_t^\tau}\tensor[_\tau]{\psi}{}\right) + 2\tensor{\ACI}{_\alpha^\beta}\tensor{\gamma}{^t_\beta^\nu}\left(\nabla_{f_t\varphi + \tensor{\chi}{_t^\tau}\tensor[_\tau]{\psi}{}}\targetACI\right)\tensor[_\nu]{\psi}{} \\
			={}& \frac23\tensor[_\alpha]{{SR(\psi)}}{} + \frac13\targetACI \tensor{\ACI}{_\alpha^\beta}\tensor[_\beta]{{SR(\psi)}}{} + 2\tensor{\ACI}{_\alpha^\beta}\tensor{\gamma}{^t_\beta^\nu}\left(\nabla_{f_t\varphi + \tensor{\chi}{_t^\tau}\tensor[_\tau]{\psi}{}}\targetACI\right)\tensor[_\nu]{\psi}{}
		\end{split}
	\end{equation}
	Consequently,
	\begin{equation}\label{eq:CommuteIJSR}
		\begin{split}
			\MoveEqLeftR
			-\frac13\left(1 + \ACI\otimes\targetACI\right)SR^N(\psi)
			= -\frac23SR^N(\psi) - s^\alpha\otimes \tensor{\ACI}{_\alpha^\beta}\tensor{\gamma}{^t_\beta^\nu}\left(\nabla_{f_t\varphi + \tensor{\chi}{_t^\tau}\tensor[_\tau]{\psi}{}}\targetACI\right)\tensor[_\nu]{\psi}{} \\
			={}& -\frac23SR^N(\psi) - s^\alpha\otimes \tensor{\ACI}{_\alpha^\beta}\tensor{\gamma}{^t_\beta^\nu}\left(\nabla_{f_t\varphi}\targetACI\right)\tensor[_\nu]{\psi}{} \\
				&- \frac12s^\alpha\otimes\tensor{\ACI}{_\alpha^\beta}\tensor{\gamma}{^t_\beta^\nu}\Gamma^s_{\nu\tau}\tensor{\chi}{_t^\tau}\varepsilon^{\kappa\sigma}\tensor{\gamma}{_s_\sigma^\lambda}\left(\nabla_{\tensor[_\kappa]{\psi}{}}\targetACI\right)\tensor[_\lambda]{\psi}{} \\
			={}& -\frac23SR^N(\psi) - s^\alpha\otimes \tensor{\ACI}{_\alpha^\beta}\tensor{\gamma}{^t_\beta^\nu}\left(\nabla_{f_t\varphi}\targetACI\right)\tensor[_\nu]{\psi}{} - \left<\vee Q\chi, \Tr_{\dual{g}_S}\left<\psi, \varphi^*\nabla\targetACI\right>\ACI\gamma\psi\right>
		\end{split}
	\end{equation}
	Summing up~\eqref{eq:CommuteIJDirac},~\eqref{eq:CommuteIJGravitino} and~\eqref{eq:CommuteIJSR} we obtain
	\begin{equation}
		\begin{split}
			0
			={}& \left(1 + \ACI\otimes\targetACI\right)\left(\Dirac\psi - 2\left<\vee Q\chi,\differential{\varphi}\right> - \frac13SR^N(\psi)\right) \\
			={}& 2\Dirac\psi + s^\alpha\otimes\tensor{\ACI}{_\alpha^\nu}\tensor{\gamma}{^k_\nu^\beta}\left(\nabla_{f_k}\targetACI\right)\tensor[_\beta]{\psi}{} \\
				&-4\left<\vee Q\chi, \differential{\varphi}\right> + 2\norm{Q\chi}^2\psi + \left<\vee Q\chi, \Tr_{\dual{g}_S}\left<\psi, \varphi^*\targetACI\right>\ACI\gamma\psi\right> \\
				&- \frac23SR^N(\psi) - s^\alpha\otimes \tensor{\ACI}{_\alpha^\beta}\tensor{\gamma}{^t_\beta^\nu}\left(\nabla_{f_t\varphi}\targetACI\right)\tensor[_\nu]{\psi}{} - \left<\vee Q\chi, \Tr_{\dual{g}_S}\left<\psi, \varphi^*\nabla\targetACI\right>\ACI\gamma\psi\right> \\
			={}& 2\left(\Dirac\psi - 2\left<\vee Q\chi, \differential{\varphi}\right> + \norm{Q\chi}^2\psi - \frac13SR^N(\psi)\right)
		\end{split}
	\end{equation}
	This completes the proof of the Corollary~\ref{cor:SJCComponentEquations}.
\end{proof}

\begin{rem}\label{rem:ConformalVarianceComponentsDJBar}
	In contrast to \(\DJBar\Phi\), its component fields depend on the superconformal metric \(m\).
	Indeed, suppose that the superconformal metric \(\tilde{m}\) is obtained from \(m\) by a superconformal rescaling by \(\lambda\in\cO_M\) and a superconformal change of splitting \(l\in\cD\).
	Then the induced change of the metric \(g\) and the gravitino \(\chi\) are given by
	\begin{align}
		\tilde{g} &= {\left(i^\#\lambda\right)}^4 g, &
		\tilde{g}_S &= {\left(i^\#\lambda\right)}^2 g_S, &
		\tilde{\chi} = \chi + \gamma i^*l.
	\end{align}
	Let us denote the standard isometries
	\begin{align}
		b\colon (\tangent{\Smooth{M}}, g) &\to (\tangent{\Smooth{M}}, \tilde{g}) &
		\beta\colon S_g &\to S_{\tilde{g}} \\
		X &\mapsto {(i^\#\lambda)}^{-2}X &
		s &\mapsto {(i^\#\lambda)}^{-1}s
	\end{align}
	The almost complex structure \(\ACI\) on \(\tangent{\Smooth{M}}\) and \(S\) are preserved under \(b\) and \(\beta\).

	The component fields of \(\Phi\) calculated with respect to \(\tilde{m}\) are given by
	\begin{align}
		\tilde{\varphi} &= \varphi, &
		\tilde{\psi} &= {(i^\#\lambda)}^{-1}\left({(\dual{\beta})}^{-1}\otimes \id_{\varphi^*\tangent{N}}\right)\psi, &
		\tilde{F} &= {(i^\#\lambda)}^{-2}F.
	\end{align}
	The components of \(\DJBar\Phi\) transform also homogenously.
	That is, the component of degree zero transforms by \({(i^\#\lambda)}^{-1}\left({(\dual{\beta})}^{-1}\otimes\id_{\varphi^*\tangent{N}}\right)\), the first order component involving \(F\) transforms by \({(i^\#\lambda)}^{-2}\) and the other one is invariant.
	The second order term transforms by \({(i^\#\lambda)}^{-3}\left({(\dual{\beta})}^{-1}\otimes\id_{\varphi^*\tangent{N}}\right)\).
	Hence, in particular the solutions of the component equations of \(\DJBar\Phi=0\) do not depend on the chosen superconformal metric~\(m\).
\end{rem}

\begin{cor}\label{Cor:JHolomorphicCurvesTrivialFamilySRS}
	Let \(M\) be a trivial family of super Riemann surfaces over \(B\), \(i\colon\Smooth{M}\to M\) be holomorphic and \(N\) Kähler.
	There is a bijection between super \(\targetACI\)-holomorphic curves \(\Phi\colon M\to N\) and tuples \((\varphi, \psi)\), where \(\varphi\colon \Smooth{M}\to N\) is a \(\targetACI\)-holomorphic curve and \(\psi\) a holomorphic section of \(\dual{S}\otimes_\C\varphi^*\tangent{N}\).
	In the case \(B=\R^{0|0}\) the set of super \(\targetACI\)-holomorphic curves on \(M\) is in bijection to the set of \(\targetACI\)-holomorphic curves \(\varphi\colon \Red{M}\to N\).
\end{cor}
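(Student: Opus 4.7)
The plan is to combine Lemma~\ref{lemma:KählerJholomorphicCurvesAreHolomorphic}, which for Kähler target $N$ already identifies super $\targetACI$-holomorphic curves $\Phi\colon M\to N$ with holomorphic maps of complex supermanifolds, with a local expansion into component fields in coordinates adapted to both the holomorphic structure of the super Riemann surface and to the holomorphic $i$. The extra structural hypotheses of the corollary (trivial family, holomorphic $i$) are then used only to choose such adapted coordinates and to control the transition functions between them.

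Concretely, I would start by choosing, near a point of $\Red{M}$, holomorphic coordinates $Z^b$ on a chart of $N$ and holomorphic superconformal coordinates $(z,\theta)$ on a chart of $M$ such that $i^\#z$ restricts to a local holomorphic coordinate on $\Smooth{M}$ and $i^\#\theta = 0$; this is possible precisely because $i$ is holomorphic and $M\to B$ is a trivial family. Any holomorphic $\Phi$ then expands uniquely as
\begin{equation}
\Phi^\#Z^b = f^b(z) + \theta\, g^b(z),
\end{equation}
with $f^b$ and $g^b$ holomorphic in $z$, possibly depending on base parameters from $B$. Applying $i^\#$ shows that $f^b$ reproduces the local coordinate description of $\varphi = \Phi\circ i$, so $\varphi$ is holomorphic, i.e.\ a $\targetACI$-holomorphic curve into the Kähler manifold $N$. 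Since $\cD$ is generated by $D=\partial_\theta+\theta\partial_z$, evaluating $\differential\Phi$ on $D$ and pulling back along $i$ identifies $\psi = i^*\differential{\Phi}|_\cD$ with $g^b\,\partial_{Z^b}$, so holomorphicity of $g^b$ in $z$ is exactly the statement that $\psi$ is a holomorphic section of the holomorphic vector bundle $\dual{S}\otimes_\C\varphi^*\tangent{N}$. Conversely, from any pair $(\varphi,\psi)$ satisfying these conditions the same formula produces a well-defined holomorphic $\Phi$, hence a super $\targetACI$-holomorphic curve.

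Next I would check naturality: under changes of holomorphic coordinates on $N$ and of superconformal coordinates on $M$ preserving the normalization $i^\#\theta=0$, the coefficients $f^b$ and $g^b$ transform as local representatives of intrinsic global objects, so the local correspondence patches into a global bijection. The addendum for $B=\R^{0|0}$ is then immediate: with no odd base parameters available, the odd holomorphic function $g^b(z)$ must vanish identically, forcing $\psi=0$ and reducing $\Phi$ to the data of the $\targetACI$-holomorphic curve $\varphi = \Red{\Phi}\colon \Red{M}\to N$.

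The main subtlety is the naturality step for $\psi$: one has to verify that the coordinate coefficient $g^b$ really transforms as a section of $\dual{S}\otimes_\C\varphi^*\tangent{N}$ under admissible changes of coordinates, and that the complex structure seen by $g^b$ matches the intrinsic complex structure on this bundle. This uses the transition rules for superconformal coordinates together with the integrability of $\targetACI$ and the holomorphicity of $i$, and is where these hypotheses are essentially employed, going beyond the pointwise identification provided by Lemma~\ref{lemma:KählerJholomorphicCurvesAreHolomorphic}.
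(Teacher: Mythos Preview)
Your proof is correct and takes a genuinely different route from the paper. The paper does not invoke Lemma~\ref{lemma:KählerJholomorphicCurvesAreHolomorphic} at all; instead it specializes the component field equations of Corollary~\ref{cor:SJCComponentEquations}. Using \(Q\chi=0\) (from holomorphicity of \(i\)) and \(\nabla\targetACI=0\) (from the Kähler hypothesis), the four component equations collapse to \(\DelJBar\varphi=0\), \((1+\ACI\otimes\targetACI)\psi=0\), \(\Dirac\psi=0\), and \(F=0\); a separate argument using \(\targetACI R^N=R^N\targetACI\) and Lemma~\ref{lemma:SRIdentities} is needed to kill the curvature term \(SR^N(\psi)\). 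The paper then shows by a local calculation that the two conditions on \(\psi\) together amount to holomorphicity of \(\psi\) as a section of \(\dual{S}\otimes_\C\varphi^*\tangent{N}\).

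Your approach is more direct in the Kähler setting: Lemma~\ref{lemma:KählerJholomorphicCurvesAreHolomorphic} hands you holomorphicity of \(\Phi\) for free, and the expansion \(\Phi^\#Z^b=f^b(z)+\theta g^b(z)\) simultaneously encodes \(F=0\), \(\psi^{0,1}=0\), \(\DelJBar\varphi=0\), and \(\Dirac\psi=0\) without having to pass through Proposition~\ref{prop:ComponentsOfDJBar} or the \(SR^N(\psi)\) computation. The price you pay is the naturality check you flag, which the paper avoids because its component fields are already defined intrinsically. Conversely, the paper's route, while computationally heavier here, is the one that generalizes to the non-integrable almost Kähler case treated later, where your holomorphic expansion is not available.
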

\begin{proof}
	If the embedding \(i\colon \Smooth{M}\to M\) is holomorphic, we have \(Q\chi=0\), see~\cite[Proposition~11.1.10]{EK-SGSRSSCA}.
	As the target~\(N\) is Kähler it holds \(\nabla\targetACI=0\) and \(\targetACI R^N(X, Y) = R^N(X, Y)\targetACI\).
	Together with \(\left(1+\ACI\otimes\targetACI\right)\psi=0\) and Lemma~\ref{lemma:SRIdentities} this yields
	\begin{equation}
		\begin{split}
			SR^N(\psi)
			&= \left(\ACI\otimes \targetACI\right)\left(\ACI\otimes\targetACI\right) SR^N(\psi)
			= \left(\ACI\otimes\targetACI\right) s^\alpha\otimes\tensor{\ACI}{_\alpha^\beta}\varepsilon^{\mu\nu}\targetACI R^N(\tensor[_\beta]{\psi}{}, \tensor[_\mu]{\psi}{})\tensor[_\nu]{\psi}{} \\
			&= \left(\ACI\otimes\targetACI\right) s^\alpha\otimes\tensor{\ACI}{_\alpha^\beta}\varepsilon^{\mu\nu}\tensor{\ACI}{_\nu^\sigma} R^N(\tensor[_\beta]{\psi}{}, \tensor[_\mu]{\psi}{})\tensor[_\sigma]{\psi}{}
			= \frac13\left(\ACI\otimes\targetACI\right) SR^N(\psi)
			= \frac19 SR^N(\psi)
		\end{split}
	\end{equation}
	and consequently \(SR^N(\psi)=0\).
	The components of \(\DJBar\Phi=0\) simplify considerably to:
	\begin{align}
		\DelJBar\varphi &= 0, &
		\left(1+\ACI\otimes\targetACI\right)\psi &= 0, &
		\Dirac\psi &= 0, &
		F &= 0.
	\end{align}
	The two conditions on \(\psi\) are equivalent to \(\psi\) being a holomorphic section of \(\dual{S}\otimes_\C\varphi^*\tangent{N}\).
	Hence, for given \(\varphi\) satisfying \(\DelJBar\varphi = 0\) and every holomorphic section of \(\dual{S}\otimes_\C\varphi^*\tangent{N}\) the tuple \((\varphi, \psi, F=0)\) determines a super \(\targetACI\)-holomorphic curve \(\Phi\).

	To see why the equations on \(\psi\) are equivalent to holomorphicity, let \(x^a\) be local conformal coordinates on \(\Smooth{M}\), \(\partial_{z}=\frac12\left(\partial_{x^1}-\ic\partial_{x^2}\right)\) and let \(g(\partial_{x^a}, \partial_{x^b}) = \delta_{ab}\lambda^4\).
	Then \(f_a=\frac1{\lambda^2}\partial_{x^a}\) is an orthonormal frame.
	There is a spinor frame \(t=t_3+\ic t_4\in\VSec{S}\) such that \(t\otimes t=\partial_z\) and \(s_\alpha=\lambda t_\alpha\) is an orthonormal spinor frame.
	For the dual frame \(s^\alpha\) of \(s_\alpha\) it holds that
	\begin{equation}
		\nabla^{\dual{S}}_X s^\alpha
		= -\frac12 \left<X, \omega^{LC}\right> s^\beta\tensor{\ACI}{_\beta^\alpha},
	\end{equation}
	where \(\left<f_k, \omega^{LC}\right> = \lambda^2\tensor{\ACI}{_k^l}\partial_{x^l}\frac1{\lambda^2}\).
	If we now assume that \(\tangent{N}\) is trivialized by a set of holomorphic frames \(e_B\), the field \(\psi\) can be written locally as
	\begin{equation}
		\psi = s^\alpha \tensor[_\alpha]{\psi}{^B}\otimes \varphi^*e_B
		= \left(\tensor[_3]{\psi}{^B} + \ic\tensor[_4]{\psi}{^B}\right)\frac1\lambda \dual{t}\otimes \varphi^*e_B
	\end{equation}
	It remains to show that the Dirac-equation implies the holomorphicity of the coefficients \(\left(\tensor[_3]{\psi}{^B} + \ic\tensor[_4]{\psi}{^B}\right)\frac1\lambda\):
	\begin{equation}
		\begin{split}
			\Dirac \psi
			&= s^\beta\left(\frac12 \omega_k^{LC}\tensor{\gamma}{^k_\beta^\mu}\tensor{\ACI}{_\mu^\alpha}\tensor[_\alpha]{\psi}{^B} + \tensor{\gamma}{^k_\beta^\alpha} f_k\tensor[_\alpha]{\psi}{^B}\right) \varphi^*e_B - s^\beta\tensor{\gamma}{^k_\beta^\alpha}\tensor[_\alpha]{\psi}{^B}\otimes \nabla_{f_k}\varphi^*e_B \\
			&= \left(s^3 \frac1{\lambda}\left(\partial_{x^1}(\tensor[_4]{\psi}{^B}\frac1\lambda) - \partial_{x^2}(\tensor[_3]{\psi}{^B}\frac1\lambda)\right)
			+ s^4 \frac1{\lambda}\left(\partial_{x^1}(\tensor[_3]{\psi}{^B}\frac1\lambda) + \partial_{x^2}(\tensor[_4]{\psi}{^B}\frac1\lambda)\right)\right)\otimes\varphi^*e_B
		\end{split}
	\end{equation}
	Here we have used in the second step that \(N\) is Kähler and hence the terms with covariant derivatives vanish:
	\begin{equation}
		\begin{split}
			s^\beta\tensor{\gamma}{^k_\beta^\alpha}\tensor[_\alpha]{\psi}{^b}\otimes \nabla_{f_k}\varphi^*e_b
			&= \frac12s^\beta\tensor{\gamma}{^k_\beta^\alpha}\left(\tensor[_\alpha]{\psi}{^b} - \tensor{\ACI}{_\alpha^\mu}\tensor[_\mu]{\psi}{^c}\tensor{\targetACI}{_c^b}\right)\otimes \nabla_{f_k}\varphi^*e_b \\
			&= s^\beta\tensor{\gamma}{^k_\beta^\alpha}\tensor[_\alpha]{\psi}{^b}\otimes \left(\bar\partial \varphi^*e_b\right)(f_k)
			= 0
			\qedhere
		\end{split}
	\end{equation}
\end{proof}

Corollary~\ref{Cor:JHolomorphicCurvesTrivialFamilySRS} gives a rough idea of the expected dimension of the moduli space of super \(\targetACI\)-holomorphic curves.
By~\cite[Example~15.2.1]{F-IT}, we have
\begin{equation}
	\begin{split}
		\chi(\Smooth{M}, \dual{S}\otimes_\C\varphi^*\tangent{N})
		&= \dim_\C H^0(\Smooth{M}, \dual{S}\otimes_\C\varphi^*\tangent{N}) - \dim_\C H^1(\Smooth{M}, \dual{S}\otimes\varphi^*\tangent{N}) \\
		&= \left<c_1(\dual{S}\otimes_\C\varphi^*\tangent{N}), \Smooth{M}\right> + n(1-p) \\
		&= \left<c_1(\tangent{N}), A\right> + n(p-1) + n(1-p)
		= \left<c_1(\tangent{N}), A\right>
	\end{split}
\end{equation}
Here \(A\) denotes the homology class of \(\im \Red{\Phi}\), \(2n\) is the real dimension of \(N\) and \(p\) the genus of \(\Smooth{M}\).
We have used that \(\dual{S}\otimes_\C\dual{S} = \cotangent{\Smooth{M}}\) and hence \(c_1(\dual{S})=p-1\).

The holomorphic fields \(\psi\in\VSec{\dual{S}\otimes_\C\varphi^*\tangent{N}}\) constitute the odd fiber of the moduli space of super \(\targetACI\)-holomorphic curves at the point given by \(\varphi\).
Consequently, it would be expected, that the moduli space of super \(\targetACI\)-holomorphic curves is of real dimension
\begin{equation}
	2n(1-p) + 2\left<c_1(\tangent{N}), A\right>| 2\left<c_1(\tangent{N}), A\right>.
\end{equation}
We will show in Section~\ref{Sec:ModuliSpace} that under certain conditions there is indeed a moduli space of super \(\targetACI\)-holomorphic curves as supermanifold of that dimension.

\section{The space of maps}\label{Sec:SpaceOfMaps}
The goal of this section is to recall the necessary theory of infinite-dimensional supermanifolds and setup the needed infinite dimensional supermanifolds of maps.
To this end, we use the Molotkov--Sachse approach to infinite-dimensional supermanifolds, as proposed in~\cite{M-IDCSM} and worked out in~\cite{S-GAASTS}.
The supermanifold structure on
\begin{equation}
	\underline{\Fieldspace}
	=\Set{\Phi\colon M\to N}
\end{equation}
has been studied before in~\cite{H-SSSMSM}.
However, we propose a new way to construct charts on \(\underline{\Fieldspace}\) for the case of maps from a closed compact super Riemann surface to a Riemannian manifold using the component field formalism and the exponential map on the target.
Furthermore, we construct the space of maps \(\underline{\Fieldspace}_{\mathcal{X}}\) where the super Riemann surface structure of the domain varies for gravitinos given in a finite-dimensional superdomain~\(\underline{\mathcal{X}}\).
Using parallel transport we construct the vector bundle \(\underline{\Targetbundle}_{\mathcal{X}}\to\underline{\Fieldspace}_{\mathcal{X}}\), whose fiber over~\(\Phi\) is given by \(\VSec{\dual{\cD}\otimes \Phi^*\tangent{N}}\).
The operator \(\DJBar\) yields a section \(\underline{\mathcal{S}}\colon \underline{\Fieldspace}_{\mathcal{X}}\to\underline{\Targetbundle}_{\mathcal{X}}\).
This construction is tailored to allow to determine the moduli space of super \(\targetACI\)-holomorphic curves \(\underline{\mathcal{M}}(\mathcal{X}, A)\) as the zero-locus of the section~\(\underline{\mathcal{S}}\) in the next Section~\ref{Sec:ModuliSpace}.

\subsection{Inifinite dimensional supermanifolds}
Let \(C\) and \(M\) be supermanifolds.
A \(C\)-point of \(M\) is a map \(p\colon C\to M\) and all \(C\)-points form a set \(\Hom(C, M)\).
By Yoneda-Lemma, the supermanifold \(M\) is completely determined by the functor
\begin{equation}
	\begin{split}
		\cat{SFinMan}^{op}&\to \cat{Set} \\
		C&\mapsto \Hom(C, M)
	\end{split}
\end{equation}
from the opposite of the category of finite-dimensional super manifolds to the category of sets.

The idea of the Molotkov--Sachse approach to supermanifolds is to restrict to \(\R^{0|k}\)-points of \(M\) but to enrich the resulting sets to carry the structure of a smooth manifold.
Let us denote by \(\cat{SPoint}\) the restriction of the category of supermanifolds to the superpoints~\(\R^{0|k}\) for all \(k\in\NaturalNumbers\).
The opposite category \(\cat{SPoint}^{op}\) is isomorphic to the category \(\cat{Gr}\) of finitely generated Grassmannn algebras.
In the Molotkov--Sachse approach, a finite-dimensional supermanifold \(\underline{M}\) is then a covariant functor with values in the category of finite-dimensional smooth manifolds
\begin{equation}
	\underline{M}\colon \cat{SPoint}^{op}\to \cat{FinMan}
\end{equation}
with a Grothendieck topology and locally isomorphic to the point functor of a superdomain.

\begin{ex}[Point functor of \(\R^{m|n}\)]\label{ex:PointFunctorRmn}
	Notice that the superdomains \(\R^{0|k}\) are by definition a topological point with the sheaf of functions isomorphic to the Grassmann algebra \(\bigwedge_k\).
	Let us denote the coordinates of \(\R^{0|k}\), that is, a set of generators for \(\bigwedge_k\), by \((\lambda^\kappa)\).

	Furthermore, let \((x^a, \eta^\alpha)\) be the standard coordinates of the superdomain \(\R^{m|n}\).
	We call a map \(p\colon \R^{0|k}\to \R^{m|n}\) an \(\R^{0|k}\)-point of \(\R^{m|n}\).
	By the charts theorem every such \(\R^{0|k}\)-point is given by
	\begin{align}
		p^\#x^a &= \sum_{\underline{\kappa}\text{ even}} \lambda^{\underline{\kappa}}\tensor[_{\underline{\kappa}}]{p}{^a}, &
		p^\#\eta^\alpha &= \sum_{\underline{\kappa}\text{ odd}} \lambda^{\underline{\kappa}}\tensor[_{\underline{\kappa}}]{p}{^\alpha},
	\end{align}
	where the sum runs over even resp.\ odd \(\Z_2\)-multiindices and \(\tensor[_{\underline{\kappa}}]{p}{^A}\in\R\).
	Denote the standard basis of the super vector space \(\R^{m|n}\) by \(e_A\), where \(e_a\) is a basis for the even part and \(e_\alpha\) a basis of the odd part.
	Now, \(\R^{0|k}\)-points of \(\R^{m|n}\) are in one-to-one correspondence to elements
	\begin{equation}
		\lambda^{\underline{\kappa}}\otimes \tensor[_{\underline{\kappa}}]{p}{^A} e_A
		= \sum_{\underline{\kappa}\text{ even}} \lambda^{\underline{\kappa}}\otimes \tensor[_{\underline{\kappa}}]{p}{^a} e_a + \sum_{\underline{\kappa}\text{ odd}} \lambda^{\underline{\kappa}}\otimes \tensor[_{\underline{\kappa}}]{p}{^\alpha} e_\alpha
	\end{equation}
	of \({\left(\R^{m|n}\otimes \bigwedge_k\right)}_0\), the even part of the tensor product of the super vector space \(\R^{m|n}\) with the Grassmann algebra~\(\bigwedge_k\).
	In particular, \(\Hom(\R^{0|k}, \R^{m|n})={\left(\R^{m|n}\otimes \bigwedge_k\right)}_0\) is equipped with the structure of a topological vector space and a manifold structure.
	In the Molotkov--Sachse approach the supermanifold \(\R^{m|n}\) is given by the covariant functor
	\begin{equation}
		\begin{split}
			\underline{\R}^{m|n}\colon \cat{SPoint}^{op}&\to \cat{FinMan} \\
			\R^{0|k}&\mapsto {\left(\R^{m|n}\otimes {\bigwedge}_k\right)}_0.
		\end{split}
	\end{equation}

	Smooth maps \(f\colon \R^{m|n}\to \R^{p|q}\) between superdomains correspond in the Molotkov--Sachse approach to natural transformations \(\underline{\R}^{m|n}\to\underline{\R}^{p|q}\).
	The \(\R^{0|k}\)-point \(p\colon \R^{0|k}\to \R^{m|n}\) is mapped under \(f\) to the \(\R^{0|k}\)-point \(f\circ p\) of \(\R^{p|q}\).
	One can work out the explicit formula to show that the map \(p\mapsto f\circ p\) is a smooth map from \({\left(\R^{m|n}\otimes {\bigwedge}_k\right)}_0\) to \({\left(\R^{p|q}\otimes {\bigwedge}_k\right)}_0\).
\end{ex}

In~\cites{M-IDCSM}{S-GAASTS} it is shown that the category of finite-dimensional supermanifolds as ringed spaces can equivalently be reformulated as functors \(\cat{SPoint}^{op}\to\cat{FinMan}\).
The main idea to generalize to infinite-dimensional supermanifolds is to replace the category~\(\cat{FinMan}\) of finite-dimensional manifold by a suitable category of infinite-dimensional manifolds, such as Banach or Fréchet manifolds.
In the following, we are interested in functors \(\underline{M}\colon\cat{Spoint}^{op}\to\cat{Man}\) and natural transformations between them.
In this work, the category \(\cat{Man}\) will be either the category of finite-dimensional manifolds \(\cat{FinMan}\), the category of Fréchet-manifolds \(\cat{FrMan}\) or the category \(\cat{BMan}\) of Banach-manifolds with their respective smooth maps.
In principle, other categories of locally convex spaces such as the more general convenient manifolds, see~\cites{KM-CSGA}{H-SSSMSM}, are also suitable.

\begin{defn}[{see~\cites[Chapter~3.5, 3.6]{S-GAASTS}{M-IDCSM}}]\label{defn:SManFunctor}
A functor \(\underline{U}\colon\cat{SPoint}^{op}\to\cat{Man}\) is called an open subset of \(\underline{M}\colon\cat{SPoint}^{op}\to\cat{Man}\), or \(\underline{U}\subset\underline{M}\) if there is a functor morphism \(\underline{U}\to\underline{M}\) such that for every \(C\in\cat{SPoint}\) the manifold \(\underline{U}(C)\) is an open subset of \(\underline{M}(C)\).

	Let \(V\) be a topological super vector space, that is \(V=V_0\oplus V_1\) where \(V_i\in\cat{Man}\) and define \(\underline{V}\colon \cat{SPoint}^{op}\to\cat{Man}\) by \(\underline{V}(C)={\left(V\otimes\cO_C\right)}_0\).
	For an open subset \(U\subset V\), we define the restriction \(\underline{V}|_{U}\) of \(\underline{V}\) to \(U\) by
	\begin{equation}
		\underline{V}|_U(\R^{0|k})
		= \Set{ p = \sum_{\underline{\kappa}}p_{\underline{\kappa}}\otimes \lambda^{\underline{\kappa}}\in{\left(V\otimes{\bigwedge}_k\right)}_0\given p_0\in U}.
	\end{equation}
	One can show that all open subfunctors of \(\underline{V}\) are of the form \(\underline{V}|_U\).
	An open subfunctor of \(\underline{V}\) is called a superdomain.

	A functor morphism \(f\colon\underline{U}\to\underline{U}'\) is called supersmooth if for every \(C\in\cat{SPoint}\) the maps \(f(C)\colon\underline{U}(C)\to\underline{U}'(C)\) are smooth and their differentials are \({\left(\cO_C\right)}_0\)-linear at every point.

	A collection of open subfunctors \(\Set{\underline{U}_i}\) of \(\underline{M}\) is called a supersmooth atlas if
	\begin{itemize}
		\item
			Every \(\underline{U}_i\) is isomorphic to a superdomain.
		\item
			The open subsets \(\Set{\underline{U}_i}\) cover \(\underline{M}\), that is, for every \(C\in\cat{SPoint}\) the open sets \(\underline{U}_i(C)\) cover \(\underline{M}(C)\).
		\item
			For all \(i\) and \(j\) the fiber product \(\underline{U}_i\times_{\underline{M}}\underline{U}_j\) has the structure of a superdomain such that the projections \(\underline{U}_i\times_{\underline{M}}\underline{U}_j\to\underline{U}_i\) and \(\underline{U}_i\times_{\underline{M}}\underline{U}_j\to\underline{U}_j\) are supersmooth.
	\end{itemize}

	As usual, two supersmooth atlases are equivalent if their union is also an atlas.
	A functor \(\underline{M}\) with an equivalence class of supersmooth atlases is called a supermanifold.
	A functor morphism \(f\colon \underline{M}\to\underline{M}'\) is called supersmooth if it restricts to supersmooth maps of superdomains using atlases.

	We denote the resulting category by \(\cat{SMan}\), or more specifically by \(\cat{SFinMan}\) for finite-dimensional supermanifolds.
\end{defn}

For further comments on those definitions we refer to the original works.
Let us just note that it has been proven in~\cite{M-IDCSM} that the category \(\cat{SFinMan}\) is equivalent to the category of supermanifolds in the ringed space approach.
Working with the more general notions of possibly infinite dimensional manifolds allows to define infinite dimensional supermanifold, extending the notion of supermanifold in the ringed space approach.

For the remainder of this work we will need a slight generalization of \(\cat{SMan}\): the category \(\cat{SMan}_B\) of families of supermanifolds over \(B\).
In order not to deal with the topological complications arising from a topologically non-trivial base, we will assume \(B\in\cat{SPoint}\).

Let \(\cat{SPoint}_B\) denote the category of superpoints over \(B\), that is superpoints \(C\to B\) together with smooth maps \(C\to C'\) over \(B\).
Then we define the category \(\cat{SMan}_B\) repeating Definition~\ref{defn:SManFunctor} where we replace \(\cat{SPoint}\) by \(\cat{SPoint}_B\).
In order to justify why we call the category \(\cat{SMan}_B\) the category of supermanifolds fibered over \(B\) let us briefly give an argument why the category \(\cat{SFinMan}_B\) is equivalent to the category of supermanifolds over \(B\) in the ringed space formalism:
First notice that there is an inclusion \(\cat{SPoint}\to\cat{SPoint}_B\) that sends every \(C\in\cat{SPoint}\) to the trivial \(C\to \R^{0|0}\to B\) in \(\cat{SPoint}_B\).
Consequently, every \(\underline{M}\in\cat{SFinMan}_B\) can also be seen as a supermanifold in \(\cat{SFinMan}\) and hence as a supermanifold \(M\) in the ringed space formalism.
The object \(\underline{B}\in\cat{SFinMan}_B\) given by all \(C\)-points of \(B\) is a final object in the category \(\cat{SFinMan}\) hence we obtain a map \(M\to B\) in the ringed space formalism.
It now remains to show that \(M\to B\) is a submersion which can be seen in the local picture, analogously to Example~\ref{ex:PointFunctorRmn}.

\subsection{A supermanifold-structure on \texorpdfstring{\(\Hom(M, N)\)}{Hom(M, N)}}
In this section we want to equip the sets
\begin{equation}
	\Fieldspace
	=\Set{\Phi\colon M\to N}
\end{equation}
of maps from a closed compact super Riemann surface to a Riemannian manifold \(N\) with the structure of an infinite-dimensional Fréchet supermanifold.
To this end, we construct local charts around a given \(\Phi\) using the exponential map on \(N\) and the component field formalism for super Riemann surfaces.
In a first step we will focus on the super Riemann surface \(M\) on \(\Red{M}\) determined by \(\Red{g}\), \(\Red{S}\) and \(\chi=0\).
In a second step we will allow the gravitino to vary in a finite-dimensional superdomain \(\underline{\mathcal{X}}\).

The work~\cite{H-SSSMSM} has achieved the goal of constructing a supermanifold structure on \(\Hom(M, N)\) in a different setting.
In particular, there, \(M\) is not required to be compact or a super Riemann surface and \(N\) can have odd dimensions and does not need a Riemannian metric.
However, the case of non-trivial families \(M\) over \(B\) is not treated in~\cite{H-SSSMSM}.

The exponential map of the classical Riemannian manifold \((N, n)\) allows us to identify maps in the neighbourhood of a given map \(\Phi\colon M\to N\) with a neighbourhood of zero in the \({\left(\cO_B\right)}_0\)-module \({\VSec{\Phi^*\tangent{N}}}_0\) of even sections of \(\Phi^*\tangent{N}\).
This is in analogy to the case of maps between ordinary manifolds as treated, for example, in~\cite[Theorem~42.1]{KM-CSGA}.
The exponential map \(\exp\) on \(N\) is not defined on all \(\tangent{N}\) but we can assume that there is a neighbourhood \(U\) of the zero-section which is diffeomorphic under \((\pi_N, \exp)\) to an open neighbourhood \(V\) of the diagonal in \(N\times N\).
For a given \(\Phi\in\Hom_B(M, N)\) we denote by
\begin{equation}
	V_\Phi = \Set{\tilde{\Phi}\in\Hom_B(M, N)\given (\Red{\Phi}(\Red{M}), \Red{\tilde{\Phi}}(\Red{M}))\subset V}
\end{equation}
and
\begin{equation}
	\begin{split}
		v_\Phi\colon V_\Phi &\to {\VSec{\Phi^*\tangent{N}}}_0 \\
		\tilde{\Phi} &\mapsto {(\pi_N, \exp)}^{-1}\circ(\Phi, \tilde{\Phi})
	\end{split}
\end{equation}
Notice that \({(\pi_N, \exp)}^{-1}\circ(\Phi, \tilde{\Phi})\) is a map from \(M\) to \(\tangent{N}\) that projects down to \(\Phi\) and hence corresponds to an even section of \(\Phi^*\tangent{N}\).
The inverse can be taken, whenever \((\Red{\Phi}(\Red{M}), \Red{\tilde{\Phi}}(\Red{M}))\subset V\).
Hence, \(v_\Phi\) can be restricted to an bijection of sets \(v_\Phi\colon V_\Phi\to U_\Phi\subset{\VSec{\Phi^*\tangent{N}}}_0\).
We denote the inverse of \(v_\Phi\) by \(\exp_\Phi\):
\begin{equation}
	\begin{split}
		\exp_\Phi\colon U_\Phi\subset{\VSec{\Phi^*\tangent{N}}}_0&\to V_\Phi\subset\Hom_B(M, N)\\
		Z &\mapsto {(v_\Phi)}^{-1}Z
	\end{split}
\end{equation}

Let now \(M\) be the super Riemann surface over \(\R^{0|0}\) with \(\Red{i}\colon \Red{M}\to M\) determined by the metric \(\Red{g}\), the spinor bundle \(\Red{S}\) and the gravitino \(\chi=0\) on \(\Red{M}\).
Consider the functor
\begin{equation}
	\begin{split}
		\underline{\Fieldspace}\colon \cat{SPoint}^{op} &\to \cat{Set}\\
		C &\mapsto \underline{\Fieldspace}(C)=\Hom_C(M\times C, N\times C)
	\end{split}
\end{equation}
We will show below that we can use the sets \(V_\Phi\) to construct charts for \(\underline{\Fieldspace}\).
This will lead to an infinite-dimensional Fréchet-supermanifold \(\underline{\Fieldspace}\) such that its \(C\)-points are given by families of maps \(M\to N\) parametrized by \(C\).

Recall that any map \(\Phi\in\Hom_C(M\times C, N)\) is determined by its component fields \(\varphi\colon \Smooth{M}\to N\), \(\psi\in\VSec{\cotangent{\Smooth{M}}\otimes S}\) and \(F\in\VSec{\Smooth{\varphi}^*\tangent{N}}\).
We denote the base extension of \(\Red{i}\colon \Red{M}\to M\) by \(i_C\colon \Smooth{M}=\Red{M}\times C\to M\times C\) and fix a map \(\phi\colon \Red{M}\to N\).
For every \(C\), the map \(\phi\) determines maps \(\Phi_C\in\Hom(M\times C, N)\) whose component fields are given by \(\varphi_C=\phi\times\id_C\), \(\psi_C=0\) and \(F_C=0\).
Setting
\begin{equation}
	\underline{V}_\phi(C) = \Set{\tilde{\Phi}\in\Hom_C(M\times C, N\times C)\given (\phi(\Red{M}), \Red{\tilde{\Phi}}(\Red{M}))\subset V}
\end{equation}
and
\begin{equation}
	\begin{split}
		\underline{v}_\phi(C)\colon \underline{V}_\phi(C) &\to \underline{U}_\phi(C)\subset {\VSec{\Phi_C^*\tangent{N}}}_0 \\
		\tilde{\Phi} &\mapsto {(\pi_N, \exp)}^{-1}\circ(\Phi_C, \tilde{\Phi})
	\end{split}
\end{equation}
one obtains a chart of \(\underline{\Fieldspace}\) around \(\phi\).
Every map \(\tilde{\Phi}\in\Hom_C(M\times C, N)\) is contained in \(\underline{V}_{\Red{\tilde{\Phi}}}\).
Consequently, the set of charts \(\Set{\underline{V}_\phi}\) covers \(\underline{\Fieldspace}\).

In order to give \(\underline{\Fieldspace}\) the structure of a supermanifold we equip  \(\underline{U}_\phi=\underline{v}_\phi(\underline{V}_\phi)\) with the structure of an infinite-dimensional Fréchet superdomain using the component field formalism.
By definition, the component field \(\varphi_C\) of \(\Phi_C\) is of the form \(\varphi_C=\phi\times\id_C\).
It follows \(\VSec{\varphi_C^*\tangent{N}}=\VSec{\phi^*\tangent{N}}\otimes\cO_C\) and for \(S_C=i_C^*\cD\) that \(\VSec{S_C}=\VSec{\Red{S}}\otimes\cO_C\).

We denote the component fields of \(Y\in{\VSec{\Phi_C^*\tangent{N}}}_0\) by
\begin{equation}\label{eq:DefnComponentFieldsY}
	\begin{aligned}
		\xi &= i_C^*Y \in{\VSec{\varphi_C^*\tangent{N}}}_0, \\
		\zeta &= s^\alpha\otimes i_C^*\nabla_{F_\alpha}Y \in{\VSec{\dual{S}\otimes\varphi_C^*\tangent{N}}}_0, \\
		\sigma &= -\frac12i_C^*\DLaplace Y \in{\VSec{\varphi_C^*\tangent{N}}}_0.
	\end{aligned}
\end{equation}
As those component fields determine \(Y\) completely, we obtain isomorphisms
\begin{equation}
	\begin{split}
		{\VSec{\Phi_C^*\tangent{N}}}_0
		&\cong {\VSec{\varphi_C^*\tangent{N}}}_0 \oplus {\VSec{\dual{S_C}\otimes\varphi_C^*\tangent{N}}}_0 \oplus {\VSec{\varphi_C^*\tangent{N}}}_0 \\
		&\cong {\left(\left(\VSec{\phi^*\tangent{N}} \oplus \VSec{\Red{\dual{S}}\otimes\phi^*\tangent{N}} \oplus \VSec{\phi^*\tangent{N}}\right) \otimes \cO_C\right)}_0 \\
		&\cong \VSec{\phi^*\tangent{N}}\otimes{\left(\cO_C\right)}_0 \oplus \VSec{\Red{\dual{S}}\otimes\phi^*\tangent{N}}\otimes{\left(\cO_C\right)}_1 \oplus \VSec{\phi^*\tangent{N}}{\otimes\left(\cO_C\right)}_0 \\
	\end{split}
\end{equation}
We call the space on the last line \(\underline{W}_\phi(C)\).
The isomorphism \({\VSec{\Phi_C^*\tangent{N}}}\cong\underline{W}_\phi(C)\) can be made more explicit by choosing coordinates \(\lambda^{\kappa}\), \(\kappa=1, \dotsc, k\) on \(C=\R^{0|k}\).
Then, the even component fields \(\xi\), \(\zeta\) and \(\sigma\) of \(Y\in{\VSec{\Phi_C^*\tangent{N}}}_0\) can be expanded as follows:
\begin{align}
	\xi &= \sum_{\underline{\kappa}\text{ even}}\lambda^{\underline{\kappa}}\tensor[_{\underline{\kappa}}]{\xi}{} &
	\zeta &= \sum_{\underline{\kappa}\text{ odd}}\lambda^{\underline{\kappa}}\tensor[_{\underline{\kappa}}]{\zeta}{} &
	\sigma &= \sum_{\underline{\kappa}\text{ even}}\lambda^{\underline{\kappa}}\tensor[_{\underline{\kappa}}]{\sigma}{}
\end{align}
Here, \(\tensor[_{\underline{\kappa}}]{\xi}{}\) and \(\tensor[_{\underline{\kappa}}]{\sigma}{}\) are sections of \(\phi^*\tangent{N}\) and \(\tensor[_{\underline{\kappa}}]{\zeta}{}\) are sections of \(\Red{\dual{S}}\otimes\phi^*\tangent{N}\).
Notice that \(Y\in\underline{U}_\phi(C)=\im\underline{v}_\phi(C)\) if and only if \(\tensor[_0]{\xi}{}\in U\), where \(U\subset \tangent{N}\) is the open subset isomorphic to \(V\subset N\times N\) under \((\pi_N, \exp)\).

The vector spaces of sections \(\VSec{\phi^*\tangent{N}}\) and \(\VSec{\Red{\dual{S}}\otimes\phi^*\tangent{N}}\) have the structure of Fréchet spaces induced by the \(C^k\)-seminorms given by the metrics \(g_S\) and \(\phi^*\tangent{N}\).
Consequently also the spaces \(\underline{W}_\phi(C)\) are Fréchet spaces as a finite sum of Fréchet spaces is again Fréchet.
Setting
\begin{equation}
	\underline{U}_\phi(C) = \Set{(\xi,\zeta, \sigma)\in\underline{W}_\phi(C)\given \tensor[_0]{\xi}{}\in U}
\end{equation}
we obtain an open Fréchet superdomain \(\underline{U}_\phi\) and bijective functor morphism
\begin{equation}
	\underline{v}_\phi\colon \underline{V}_\phi\to \underline{U}_\phi.
\end{equation}
In addition, a change of charts is a smooth map of superdomain, as they can be obtained through the exponential map on \(N\).
Hence we have proven:
\begin{prop}
	The functor \(\underline{\Fieldspace}\) carries an infinite-dimensional Fréchet supermanifold structure given by the atlas \(\Set{\underline{V}_\phi}\).
\end{prop}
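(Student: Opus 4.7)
The plan is to verify the three conditions of Definition~\ref{defn:SManFunctor} for the proposed atlas \(\{\underline{V}_\phi\}\) indexed by classical maps \(\phi\colon \Red{M} \to N\). The preceding construction already produces, for each \(\phi\), a bijective functor morphism \(\underline{v}_\phi\colon \underline{V}_\phi \to \underline{U}_\phi\) onto an open subfunctor of the Fréchet super vector space \(\underline{W}_\phi\), so the first condition that \(\underline{U}_\phi\) is a superdomain is already in hand. The covering condition is also immediate: for any \(C\)-point \(\tilde{\Phi} \in \underline{\Fieldspace}(C)\) one has \(\tilde{\Phi} \in \underline{V}_{\Red{\tilde{\Phi}}}(C)\) directly from the definition. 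What remains is to verify that the fiber product \(\underline{V}_\phi \times_{\underline{\Fieldspace}} \underline{V}_{\phi'}\) is a superdomain and that the two projections are supersmooth.

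First I would argue that the intersection defines an open subfunctor of each \(\underline{U}_\phi\): the condition \((\phi(\Red{M}) \cup \phi'(\Red{M}), \Red{\tilde{\Phi}}(\Red{M})) \subset V\) depends only on the reduced map, so its image under \(\underline{v}_\phi\) is cut out by an open condition on the component field \(\tensor[_0]{\xi}{}\), yielding an open Fréchet superdomain. The fiber product is then the common domain of the transition map, which in \(C\)-points reads
\[ \tau_{\phi,\phi'}(C)\colon Y \mapsto {(\pi_N, \exp)}^{-1} \circ (\Phi'_C, \exp_{\Phi_C}(Y)). \]

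Smoothness of \(\tau_{\phi,\phi'}(C)\) at each fixed \(C\) follows from standard arguments on nonlinear composition operators built from the smooth map \({(\pi_N, \exp)}^{-1}\): expressing \(\tau_{\phi,\phi'}(C)\) in the component-field decomposition of \(\underline{W}_\phi\) and \(\underline{W}_{\phi'}\) reduces this to smoothness of a finite collection of section-wise composition operators on the classical manifold \(\Red{M}\), for which the usual \(\Omega\)-Lemma type argument applies. Functoriality in \(C\) follows because both the defining condition of \(\underline{V}_\phi\) and the exponential map are natural under base change.

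The main obstacle will be verifying \((\cO_C)_0\)-linearity of the differential of \(\tau_{\phi,\phi'}(C)\) at every point, which is the defining condition of supersmoothness in the Molotkov--Sachse framework. My plan is to exploit the fact that \(\exp_{\Phi_C}\) and \({(\pi_N, \exp)}^{-1}\) act fiberwise on \(TN\): expanding \(Y\) as in~\eqref{eq:DefnComponentFieldsY} over a Grassmann basis of \(\cO_C\) shows that the transition is a universal polynomial expression in the odd coefficients whose coefficients are smooth section operators on \(\Red{M}\), so the differential at each point is manifestly \((\cO_C)_0\)-linear. Naturality of this construction under base change \(C' \to C\) then upgrades the pointwise smooth transition maps at each \(C\) into a supersmooth functor morphism of superdomains, completing the verification of the atlas and hence the proposition.
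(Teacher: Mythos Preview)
Your proposal is correct and follows the same route as the paper: the paper's argument is precisely the preceding discussion, which establishes that each \(\underline{U}_\phi\) is an open Fr\'echet superdomain, that the charts cover, and then asserts in one sentence that ``a change of charts is a smooth map of superdomain, as they can be obtained through the exponential map on \(N\).'' You have simply unpacked that last assertion in detail---verifying openness of overlaps, smoothness of the transition via the \(\Omega\)-Lemma style argument for nonlinear composition operators, and \((\cO_C)_0\)-linearity of the differential---which the paper leaves to the reader.
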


Up to now we have constructed the supermanifold of maps \(M\to N\) for a fixed super Riemann surface \(M\) constructed by on \(\Red{M}\) from \(\Red{g}\), \(\Red{S}\) and \(\chi=0\).
We will now construct a fibered version \(\underline{\Fieldspace}_{\mathcal{X}}\) of \(\underline{\Fieldspace}\) for a finite-dimensional space \(\mathcal{X}\) of gravitinos.
More precisely, let \(\mathcal{X}\subset\VSec{\cotangent{\Red{M}}\otimes\Red{S}}\) a finite-dimensional sub-vector space and set
\begin{equation}
	\begin{split}
		\underline{\mathcal{X}}\colon \cat{SPoint}^{op} &\to \cat{Man} \\
		C &\mapsto \mathcal{X}\otimes{\left(\cO_C\right)}_1
	\end{split}
\end{equation}
That is, \(\underline{\mathcal{X}}\) is a superdomain of dimension \(0|\dim \mathcal{X}\).
For every \(\chi\in \underline{\mathcal{X}}(B)\) we denote by \(M_\chi\) the super Riemann surface over \(B\) with embedding \(i_\chi\colon \Red{M}\times B\to M_\chi\) constructed from the triple \((\Red{g}, \Red{S}, \chi)\) on \(\Smooth{M}=\Red{M}\times B\).
The space of maps from \(M_\chi\) to \(N\) parametrized by \(\chi\) is given by the functor
\begin{equation}
	\begin{split}
		\underline{\Fieldspace}_{\mathcal{X}}\colon \cat{SPoint}^{op}&\to \cat{Set} \\
		C &\mapsto \Set{(\chi, \Phi_\chi)\given \chi\in\mathcal{X}(C)\text{, }\Phi_\chi\in\Hom_C(M_\chi, N)}
	\end{split}
\end{equation}
Using the exponential map as before allows to equip \(\underline{\Fieldspace}_{\mathcal{X}}\) with the structure of an infinite-dimensional supermanifold with surjection to \(\underline{\mathcal{X}}\).
The projection \(\underline{\Fieldspace}_{\mathcal{X}}\to \underline{\mathcal{X}}\) is given by \((\chi, \Phi_\chi)\mapsto \chi\).

To equip \(\underline{\Fieldspace}_{\mathcal{X}}\) with an atlas, notice that any map \(\phi\colon\Red{M}\to N\) determines for every \(C\in\cat{SPoint}\) and every \(\chi\in\underline{\mathcal{X}}(C)\) a map \(\Phi_{C, \chi}\colon M_\chi\to N\) whose component fields are given by \(\varphi_{C, \chi}=\phi\times\id_C\), \(\psi_{C,\chi}=0\) and \(F_{C, \chi}=0\).
For this map \(\Phi_{C, \chi}\) it holds
\begin{equation}
	\begin{split}
		{\VSec{\Phi_{C,\chi}^*\tangent{N}}}_0
		&\cong \VSec{\phi^*\tangent{N}}\otimes{\left(\cO_C\right)}_0 \oplus \VSec{\Red{\dual{S}}\otimes\phi^*\tangent{N}}\otimes{\left(\cO_C\right)}_1 \oplus \VSec{\phi^*\tangent{N}}{\otimes\left(\cO_C\right)}_0 \\
		&= \underline{W}_\phi(C)
	\end{split}
\end{equation}
We can now again use the exponential map to identify
\begin{equation}
	\begin{split}
		\underline{V}_{\mathcal{X}, \phi}(C)
		&=\Set{(\chi, \tilde{\Phi})\in \underline{\mathcal{X}}(C)\times\Hom_C(M_\chi, N)\given (\phi(\Red{M}), \Red{\tilde{\Phi}}(\Red{M}))\subset V} \\
		&\subset \underline{\Fieldspace}_{\mathcal{X}}(C)
	\end{split}
\end{equation}
with
\begin{equation}
	\underline{\mathcal{X}}(C)\times \underline{U}_{\phi}(C)
	= \Set{(\chi, \xi, \zeta, \sigma)\in\underline{\mathcal{X}}(C)\times\underline{W}_\phi(C)\given \tensor[_0]{\xi}{}\in U}.
\end{equation}
via
\begin{equation}
	\begin{split}
		\underline{v}_{\mathcal{X}, \phi}(C)\colon \underline{V}_{\mathcal{X}, \phi}(C) &\to \underline{\mathcal{X}}(C)\times\underline{U}_\phi(C) \\
		(\chi, \tilde{\Phi}) &\mapsto \left(\chi, {(\pi_N, \exp)}^{-1}\circ(\Phi_{C, \chi},\tilde{\Phi})\right)
	\end{split}
\end{equation}
Analogously to the case without gravitino, we obtain
\begin{prop}
	The functor \(\underline{\Fieldspace}_{\mathcal{X}}\) carries an infinite-dimensional Fréchet supermanifold structure given by the atlas \(\Set{v_{\underline{\mathcal{X}},\phi}\colon \underline{V}_{\mathcal{X}, \phi}\to \underline{\mathcal{X}}\times\underline{U}_\phi}\) such that the projection \(\underline{\Fieldspace}_{\mathcal{X}}\to \underline{\mathcal{X}}\) is a surjection.
\end{prop}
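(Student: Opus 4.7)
The plan is to mirror the proof just given for $\underline{\Fieldspace}$, with the gravitino $\chi$ playing the role of an additional parameter varying freely in the finite-dimensional superdomain $\underline{\mathcal{X}}$. First I would verify that the collection $\Set{\underline{V}_{\mathcal{X},\phi}}_{\phi\colon\Red{M}\to N}$ covers $\underline{\Fieldspace}_{\mathcal{X}}$: any $C$-point $(\chi,\tilde{\Phi})$ lies in $\underline{V}_{\mathcal{X},\Red{\tilde{\Phi}}}(C)$, because then the closeness condition $(\phi(\Red{M}),\Red{\tilde{\Phi}}(\Red{M}))\subset V$ reduces to the diagonal. Bijectivity of each $\underline{v}_{\mathcal{X},\phi}(C)$ onto $\underline{\mathcal{X}}(C)\times\underline{U}_\phi(C)$ follows from the diffeomorphism $(\pi_N,\exp)\colon U\to V$ on the target, together with the identification ${\VSec{\Phi_{C,\chi}^*\tangent{N}}}_0\cong \underline{W}_\phi(C)$ via component fields. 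The crucial observation here is that this last identification is independent of $\chi$: since $\varphi_{C,\chi}=\phi\times\id_C$ and $S_C$ is obtained from the fixed $\Red{S}$ by base change, only the fixed data $(\phi,\Red{S})$ enters $\underline{W}_\phi(C)$.

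The main technical step is to check that the transition maps $\underline{v}_{\mathcal{X},\phi'}\circ\underline{v}_{\mathcal{X},\phi}^{-1}$ are supersmooth morphisms between superdomains in $\underline{\mathcal{X}}\times\underline{W}_\phi$ and $\underline{\mathcal{X}}\times\underline{W}_{\phi'}$. On the $\underline{\mathcal{X}}$-factor the transition is the identity, so it suffices to control the $\underline{W}$-factor with smooth dependence on $\chi$. The transition decomposes as: reconstruct $\tilde{\Phi}\in\Hom_C(M_\chi,N)$ from its exponential data at $\Phi_{C,\chi}$; form $(\pi_N,\exp)^{-1}\circ(\Phi'_{C,\chi},\tilde{\Phi})$; read off the component fields~\eqref{eq:DefnComponentFieldsY} with respect to $M_\chi$. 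Smoothness of each step in the Fréchet sense is standard (cf.\ the exponential map argument from~\cite{KM-CSGA} used above), and $\cO_C$-linearity of differentials at every point follows as in the $\chi=0$ case treated in the preceding proposition.

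The hard part will be that the component-field projections $i_\chi^*$, $s^\alpha\otimes i_\chi^*\nabla_{F_\alpha}$ and $-\tfrac{1}{2}i_\chi^*\DLaplace$ are built from the super Riemann surface structure $M_\chi$, which itself depends on $\chi$. To handle this I would invoke the Wess--Zumino frame construction from Chapter~11 of~\cite{EK-SGSRSSCA}, where the frame $F_A$ on $M_\chi$ is expressed explicitly in terms of $(\Red{g},\Red{S},\chi)$, and argue that the resulting component field projections depend polynomially on $\chi$ with coefficients that are differential operators on $\Smooth{M}$ with $\chi$-independent symbol. Polynomial dependence on the finitely many odd parameters of $\underline{\mathcal{X}}$ then gives supersmoothness of the whole transition in one sweep.

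Finally, the projection $\underline{\Fieldspace}_{\mathcal{X}}\to\underline{\mathcal{X}}$ is supersmooth because in the constructed atlas it is the obvious projection $\underline{\mathcal{X}}\times\underline{U}_\phi\to\underline{\mathcal{X}}$ of a product of superdomains. Surjectivity on $C$-points is immediate: fix any $\phi\colon\Red{M}\to N$; then for every $\chi\in\underline{\mathcal{X}}(C)$ the distinguished lift $\Phi_{C,\chi}$ with component fields $(\phi\times\id_C,0,0)$ provides a preimage of $\chi$ in $\underline{\Fieldspace}_{\mathcal{X}}(C)$. This completes the proposed proof.
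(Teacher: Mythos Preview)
Your proposal is correct and follows essentially the same approach as the paper, which simply asserts the result ``analogously to the case without gravitino'' without giving further details; your write-up fills in precisely the points (covering, bijectivity of charts, $\chi$-independence of $\underline{W}_\phi(C)$, smoothness of transitions, and surjectivity of the projection) that the paper leaves implicit. The one elaboration you add---invoking the Wess--Zumino frame to argue polynomial dependence of the component-field projections on $\chi$---goes beyond what the paper spells out, but is consistent with the framework used throughout and is a reasonable way to justify the smoothness of transitions in the $\chi$-direction.
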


In general, the mapping space \(\underline{\Fieldspace}_{\mathcal{X}}\) is not a global product, because the maps \(\Phi_{C, \chi}\) and hence also the coordinate changes \(\underline{\mathcal{X}}\times\underline{U}_\phi\to\underline{\mathcal{X}}\times\underline{U}_{\phi'}\) depend on the gravitino.
For a given \(\chi\in\underline{\mathcal{X}}(B)\) we set
\begin{equation}
	\begin{split}
		\underline{\Fieldspace}_\chi\colon \cat{SPoint}^{op}_B&\to \cat{Man} \\
		\left(b\colon C\to B\right)&\mapsto \Set{(b^*\chi, \Phi)\given \Phi\colon M_{b^*\chi}\to N}
	\end{split}
\end{equation}
Then \(\underline{\Fieldspace}_\chi\) is the infinite-dimensional supermanifold over \(B\) of maps \(M_\chi\to N\).

\subsection{The vector bundle of antiholomorphic forms}
In this section we want to construct a vector bundle \(\underline{\Targetbundle}_{\mathcal{X}}\to\underline{\Fieldspace}_{\mathcal{X}}\) such that the fiber over \((\chi, \Phi)\in\underline{\Fieldspace}_{\mathcal{X}}(C)\) is given by the even complex anti-linear differential forms on \(M_\chi\) with values in~\(\Phi^*\tangent{N}\):
\begin{equation}
	\begin{split}
		\underline{\Targetbundle}_{\mathcal{X}}(C)|_{(\chi, \Phi)}
		&= {\VSec{\dual{\cD}\otimes \Phi^*\tangent{N}}}^{0,1}_0. \\
	\end{split}
\end{equation}
We are interested in the vector bundle \(\underline{\Targetbundle}_{\mathcal{X}}\) because \(\DJBar\Phi\) induces a section \(\underline{\mathcal{S}}\colon \underline{\Fieldspace}_{\mathcal{X}}\to \underline{\Targetbundle}_{\mathcal{X}}\).

To see that \(\underline{\Targetbundle}_{\mathcal{X}}\) is indeed a vector bundle over \(\underline{\Fieldspace}_{\mathcal{X}}\), we will give a trivialization above the open subfunctors \(\underline{V}_{\mathcal{X},\phi}\subset\underline{\Fieldspace}_{\mathcal{X}}\) using parallel transport \(P^{\nablabar}_{\exp_\Phi}\).
Roughly, for every \(X\in \underline{U}_\phi\) the vector bundle isomorphism
\begin{equation}
	\id_{\dual{\cD}}\otimes P^{\nablabar}_{\exp_\Phi tX} \colon \dual{\cD}\otimes \Phi^*\tangent{N} \to\dual{\cD}\otimes {\left(\exp_\Phi X\right)}^*\tangent{N}
\end{equation}
gives a trivialization of \(\underline{\Targetbundle}_{\mathcal{X}}\).
As \(\nablabar\) preserves the almost complex structure \(\targetACI\), this map preserves the \(0,1\)-part of both vector bundles.
In order to make this precise we need to define parallel transport for supermanifolds and properly use the formalism of infinite-dimensional vector bundles for \(\underline{\Targetbundle}_\mathcal{X}\).

Recall that in ordinary, non-super, differential geometry vectors can be transported along a curve, see, for example,~\cite[Volume~1, Chapter II.3]{KN-FDG}:
Let \(E\to N\) be a vector bundle with connection \(\nabla^E\) and \(c\colon [0,1]\to N\) a curve in \(N\).
For every vector \(X\in E_{c(0)}\) the theory of ordinary differential equations guarantees the existence of a time-dependent vector field \(X_t\) such that
\begin{align}
	\nabla_{\partial_t}^{c^*E} X_t &=  0 &
	X_0 &= X
\end{align}
The vector \(X_1\in E_{c(1)}\) is called parallel transport of \(X\) along \(c\).
We will use the notation \(P_c^\nabla X = X_1\).
Analogously, we obtain for supermanifolds:
\begin{prop}
	Let \(f_\bullet\colon M\times [0,1]\to N\) be a time-indexed family of maps from a supermanifold \(M\) to the supermanifold \(N\).
	For every connection \(\nabla^E\) on a vector bundle \(E\to N\) and every vector field \(X\in\VSec{f_0^*E}\) there is a unique vector field \(X_\bullet\in\VSec{f_\bullet^*E}\) such that
	\begin{align}
		\nabla_{\partial_t}^E X_\bullet &=0 &
		X_0 = X
	\end{align}
	We call \(P^{\nabla^E}_{f_\bullet} X = X_1\in \VSec{f_1^*E}\) the parallel transport of \(X\) with respect to \(\nabla\) along \(f_\bullet\).

	The parallel transport \(P^{\nabla^E}_{f_\bullet}\colon f_0^*E\to f_1^*E\) is an isomorphism of vector bundles over~\(M\).
	If the connection \(\nabla^E\) preserves a \(G\)-structure on \(E\) the parallel transport preserves the \(G\)-structure.
\end{prop}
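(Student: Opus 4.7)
The plan is to reduce the supergeometric parallel transport equation to a hierarchy of classical linear ODEs by working in local trivializations and expanding in the odd generators of \(\cO_M\). Fix a \(C\)-point; by a partition-of-unity argument on the topological space \(\Smooth{M}\times[0,1]\) it suffices to treat the problem locally, so we choose a coordinate patch \(V\subset N\) equipped with a local frame \(e_A\) of \(E\) and assume that \(f_\bullet(U\times[0,1])\subset V\) for some open \(U\subset M\). Write \(X_\bullet=X_\bullet^A\,f_\bullet^*e_A\) with unknown coefficients \(X_\bullet^A\in\cO_{M\times[0,1]}\), and let the Christoffel symbols of \(\nabla^E\) in the frame \(e_A\) be \(\Gamma_B^A\in\VSec{\cotangent{N}\otimes\End{E}}\). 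The equation \(\nabla_{\partial_t}^{f_\bullet^*E}X_\bullet=0\) becomes the first-order linear system
\begin{equation}
    \partial_t X_\bullet^A + X_\bullet^B\,\left\langle\partial_t f_\bullet,\, f_\bullet^*\Gamma_B^A\right\rangle = 0, \qquad X_0^A\text{ prescribed},
\end{equation}
with coefficients in \(\cO_{M\times[0,1]}\).

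Next I would solve this system inductively in the nilpotent filtration. Choosing local supercoordinates \((x^a,\eta^\alpha)\) on \(U\) and expanding every function in the monomials \(\eta^{\underline{\alpha}}\), the component at multiindex \(\underline{\alpha}\) satisfies a classical linear ODE in \(t\) whose source term depends only on components \(X_\bullet^B\) of strictly lower multiindex and on smooth functions of \((x^a,t)\). Classical linear ODE theory (existence, uniqueness, smooth dependence on parameters) applied coefficient by coefficient yields a unique smooth solution \(X_\bullet^A\) on \(U\times[0,1]\) with the prescribed initial value, and the solution is automatically \(\cO_M\)-linear in \(X\) since the system is linear. Uniqueness makes the local solutions glue to a global \(X_\bullet\in\VSec{f_\bullet^*E}\), independent of the chosen frame and patch.

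The vector bundle isomorphism statement then follows formally: running the construction backwards along the reparametrized family \(t\mapsto f_{1-t}\) produces a map \(f_1^*E\to f_0^*E\) which by uniqueness is two-sided inverse to \(P^{\nabla^E}_{f_\bullet}\), and \(\cO_M\)-linearity in the initial datum shows it is a morphism of vector bundles over \(M\). For the \(G\)-structure statement, suppose the structure is encoded by a parallel tensor \(\tau\in\VSec{E^{\otimes p}\otimes (E^\vee)^{\otimes q}}\) with \(\nabla^E\tau=0\). Applying \(\nabla_{\partial_t}\) to \(\tau(X_\bullet,\dots)\) and using \(\nabla^{f_\bullet^*E}_{\partial_t}X_\bullet=0\) shows that this tensor evaluated on parallel sections satisfies itself the trivial parallel-transport equation, so it is preserved from \(t=0\) to \(t=1\); the same argument applied to any \(G\)-invariant tensor yields preservation of the \(G\)-structure.

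The only nontrivial point is the passage from the coefficient-wise classical ODEs back to a single supergeometric solution, which would be the main technical obstacle; however, because the system is linear the induction on the nilpotent degree terminates after finitely many steps and no convergence issue arises, so the difficulty is purely bookkeeping and can be handled by invoking the standard fact that a linear ODE in \(\cO_M\)-coefficients on a supermanifold admits a unique solution (analogous to \cite[Chapter~3.3]{EK-SGSRSSCA}).
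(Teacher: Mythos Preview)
The paper states this proposition without proof, treating it as a routine extension of classical parallel transport to the super setting. Your argument is correct and is exactly the standard approach one would take: reduce to a linear first-order ODE in a local frame, expand in the odd generators to obtain a finite hierarchy of classical linear ODEs with smooth parameter dependence, and invoke uniqueness to glue and to construct the inverse. The handling of the \(G\)-structure via parallel tensors is also the expected argument.

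One small remark: your partition-of-unity step at the beginning is slightly misleading, since linear ODEs do not need to be patched via partitions of unity; uniqueness alone guarantees that the local solutions agree on overlaps and hence glue, which you in fact use later. You could drop the partition-of-unity sentence without loss. Otherwise the proof is complete and would serve as a suitable justification for the proposition the paper leaves unproved.
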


For the precise definition of super vector bundles in the categorical reformulation of supermanifolds we refer to~\cite[Chapter~3.6]{S-GAASTS}.
Roughly, a functor \(\underline{E}\colon \cat{SPoint}^{op}\to \cat{VBun}\) with values in the category of vector bundles is a super vector bundle with typical fiber~\(F\) if
\begin{itemize}
	\item
		\(F\) is a super vector space and \(\underline{F}(C)={\left(F\otimes\cO_C\right)}_0\).
	\item
		\(\underline{E}(C)\) is a vector bundle over \(\underline{M}(C)\) with typical fiber \(\underline{F}(C)\) for every \(C\in\cat{SPoint}\).
	\item
		\(\underline{E}\) can be covered by a set of open subfunctors of the form \(\underline{U}\times\underline{F}\) where \(\underline{U}\) is an open subfunctor of \(\underline{M}\) with fiberwise linear glueing over \(\underline{M}\).
\end{itemize}

The vector bundle \(\underline{\Targetbundle}_{\mathcal{X}}\) is given by the functor
\begin{equation}
	\begin{split}
		\underline{\Targetbundle}_{\mathcal{X}}\colon \cat{SPoint}^{op}&\to \cat{Set} \\
		C &\mapsto \underline{\Targetbundle}_{\mathcal{X}}(C)
	\end{split}
\end{equation}
where
\begin{equation}
	\underline{\Targetbundle}_{\mathcal{X}}(C)
	= \Set{(\chi, \tilde{\Phi}, \Xi)\in \underline{\mathcal{X}}(C)\times \Hom_C(M_\chi, N)\times {\VSec{\dual{\cD}\otimes\tilde{\Phi}^*\tangent{N}}}^{0,1}_0}.
\end{equation}
It is obvious that there is a functor morphism \(\underline{\pi}\colon \underline{\Targetbundle}_{\mathcal{X}}\to\underline{\Fieldspace}_{\mathcal{X}}\), sending \((\chi, \tilde{\Phi}, \Xi)\) to \((\chi,\tilde{\Phi})\).
For \(\phi\colon \Red{M}\to N\) and \(\Phi_{C,\chi}\colon M_\chi\to N\) as in the previous section, the preimage
\begin{equation}
	{\underline{\pi}(C)}^{-1}(\underline{V}_{\mathcal{X}, \phi}(C))
	= \Set{(\chi, \tilde{\Phi}, \Xi)\given (\chi, \tilde{\Phi})\in \underline{V}_{\mathcal{X}, \phi}(C), \Xi\in{\VSec{\dual{\cD}\otimes\tilde{\Phi}^*\tangent{N}}}^{0,1}_0}
\end{equation}
can be mapped bijectively
\begin{equation}
	\begin{split}
		\underline{e}_{\mathcal{X}, \phi}(C)\colon {\underline{\pi}(C)}^{-1}(\underline{V}_{\mathcal{X}, \phi}(C)) &\to \underline{\mathcal{X}}(C)\times\underline{U}_\phi(C)\times{\VSec{\dual{\cD}\otimes\Phi_{C,\chi}^*\tangent{N}}}_0^{0,1} \\
		(\chi, \tilde{\Phi}, \Xi) &\mapsto \left(\underline{v}_{\mathcal{X},\phi}(C)(\chi, \tilde{\Phi}), {\left(\id_{\dual{\cD}}\otimes P^{\nablabar}_{\exp_{\Phi_{C,\chi}}tX}\right)}^{-1}\Xi\right)
	\end{split}
\end{equation}
using the map \(\underline{v}_{\mathcal{X},\phi}(C)\) and parallel transport.

In order to obtain a Fréchet vector bundle  over the Fréchet supermanifold \(\underline{\Fieldspace}\) we have to give the structure of a Fréchet vector space to \({\VSec{\dual{\cD}\otimes\Phi_{C,\chi}^*\tangent{N}}}_0^{0,1}\).
This is done analogously to the case of \({\VSec{\Phi_{C,\chi}^*\tangent{N}}}_0\) and using Lemma~\ref{lemma:DefinitionComponentEquations}, which gives \({\left(\cO_C\right)}_0\)-linear isomorphisms
\begin{equation}
	\begin{split}
		\MoveEqLeft
		{\VSec{\dual{\cD}\otimes\Phi_{C,\chi}^*\tangent{N}}}_0^{0,1} \\
		\cong{}& \VSec{\dual{S}\otimes\varphi_C^*\tangent{N}}_0^{0,1}\oplus\VSec{\varphi_C^*\tangent{N}}_0\oplus\VSec{\cotangent{\Smooth{M}}\otimes\varphi_C^*\tangent{N}}_0^{0,1}\oplus\VSec{\dual{S}\otimes\varphi_C^*\tangent{N}}_0^{0,1} \\
		\cong{}& \left(
		\VSec{\dual{\Red{S}}\otimes\phi^*\tangent{N}}^{0,1}\oplus\VSec{\phi^*\tangent{N}}\oplus\VSec{\cotangent{\Red{M}}\otimes\phi^*\tangent{N}}^{0,1} \right.\\
		& \left.\oplus\VSec{\dual{\Red{S}}\otimes\phi^*\tangent{N}}^{0,1}\otimes\cO_C\right)_0 \\
		\cong{}& \VSec{\dual{\Red{S}}\otimes\phi^*\tangent{N}}^{0,1}\otimes{\left(\cO_C\right)}_1 \oplus \left(\VSec{\phi^*\tangent{N}}\oplus\VSec{\cotangent{\Red{M}}\otimes\phi^*\tangent{N}}^{0,1}\right)\otimes{\left(\cO_C\right)}_0 \\
		& \oplus\VSec{\dual{\Red{S}}\otimes\phi^*\tangent{N}}^{0,1}\otimes{\left(\cO_C\right)}_1.
	\end{split}
\end{equation}
We denote the space after the last isomorphism by \(\underline{F}_\phi(C)\).
Notice how the decomposition into component fields allows to identify \({\VSec{\dual{\cD}\otimes\Phi_{C, \chi}^*\tangent{N}}}_0^{0,1}\) and \({\VSec{\dual{\cD}\otimes\Phi_{C, \chi'}^*\tangent{N}}}_0^{0,1}\) for different gravitinos \(\chi\) and \(\chi'\).
The spaces \(\underline{F}_\phi(C)\) can be equipped with the Fréchet structure induced by the \(C^k\)-seminorms and consequently \(\underline{F}_\phi\) is an infinite-dimensional Fréchet-superdomain.
Hence, \(\underline{\Targetbundle}_\mathcal{X}\) is locally isomorphic to \(\underline{\mathcal{X}}\times\underline{U}_\phi\times\underline{F}_\phi\) where \(\underline{\pi}\colon \underline{\Targetbundle}_{\mathcal{X}}\to \underline{\Fieldspace}_{\mathcal{X}}\) is locally the projection on \(\underline{\mathcal{X}}\times\underline{U}_\phi\).
For different maps \(\phi\), \(\phi'\) the change of trivialization
\begin{equation}
	\underline{\mathcal{X}}(C)\times\underline{U}_\phi(C)\times \underline{F}_\phi(C)\to \underline{\mathcal{X}}(C)\times\underline{U}_{\phi'}(C)\times\underline{F}_{\phi'}(C)
\end{equation}
is fiberwise \(\cO_C\)-linear because the parallel transport is \(\cO_C\)-linear.
Consequently:
\begin{prop}
	\(\underline{\Targetbundle}_{\mathcal{X}}\to\underline{\Fieldspace}_{\mathcal{X}}\) is a super vector bundle of infinite rank over \(\underline{\Fieldspace}_{\mathcal{X}}\) with atlas
	\begin{equation}
		\Set{\underline{e}_{\mathcal{X}, \phi}\colon \underline{\pi}^{-1}\left(\underline{V}_{\mathcal{X}, \phi}\right)\to \underline{\mathcal{X}}\times \underline{U}_\phi\times \underline{F}_\phi}.
	\end{equation}
\end{prop}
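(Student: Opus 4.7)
The plan is to verify the three conditions defining a super vector bundle as stated just above the proposition: that the typical fiber arises from a super vector space, that each $\underline{\Targetbundle}_{\mathcal{X}}(C)$ is a vector bundle over $\underline{\Fieldspace}_{\mathcal{X}}(C)$ with this fiber, and that the charts $\underline{e}_{\mathcal{X},\phi}$ cover $\underline{\Targetbundle}_{\mathcal{X}}$ with fiberwise $\cO_C$-linear transitions. Functoriality of $\underline{\Targetbundle}_{\mathcal{X}}$ under morphisms $C'\to C$ is immediate from base change, since pullback of sections preserves both the type $(0,1)$ decomposition and even parity. The typical fiber can be extracted from the decomposition displayed just before the proposition: setting $F_\phi = \VSec{\dual{\Red{S}}\otimes\phi^*\tangent{N}}^{0,1}\oplus \VSec{\phi^*\tangent{N}} \oplus \VSec{\cotangent{\Red{M}}\otimes\phi^*\tangent{N}}^{0,1}\oplus \VSec{\dual{\Red{S}}\otimes\phi^*\tangent{N}}^{0,1}$ with appropriate parities, one has $\underline{F}_\phi(C)={(F_\phi\otimes\cO_C)}_0$ as required.

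For the trivializations, I would check that $\underline{e}_{\mathcal{X},\phi}$ is a natural isomorphism onto $\underline{\mathcal{X}}\times \underline{U}_\phi\times \underline{F}_\phi$ at every $C$-point. Bijectivity is clear because $\underline{v}_{\mathcal{X},\phi}$ is bijective by the preceding proposition and parallel transport $\id_{\dual{\cD}}\otimes P^{\nablabar}_{\exp_{\Phi_{C,\chi}}tX}$ is an isomorphism of vector bundles. That the image indeed lies in the $(0,1)$-component follows from $\nablabar\targetACI=0$, so parallel transport with respect to $\nablabar$ commutes with $\targetACI$ and therefore preserves the type decomposition. Supersmoothness of $\underline{e}_{\mathcal{X},\phi}$ reduces to supersmoothness of $\underline{v}_{\mathcal{X},\phi}$ (already established) together with smooth dependence of parallel transport on the endpoint, i.e.\ on $X\in\underline{U}_\phi$. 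The latter is a parameter-dependent ODE argument: the curve $t\mapsto \exp_{\Phi_{C,\chi}}tX$ depends smoothly on $(\chi,X)$, and the linear parallel-transport ODE depends smoothly on its coefficients.

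Fiberwise $\cO_C$-linearity of the transition map $\underline{e}_{\mathcal{X},\phi'}\circ \underline{e}_{\mathcal{X},\phi}^{-1}$ on each fiber rests on two inputs: parallel transport is ${(\cO_C)}_0$-linear between fibers of the respective pullback bundles, and the component-field identification of Lemma~\ref{lemma:DefinitionComponentEquations} is $\cO_C$-linear. Supersmoothness of transitions then follows by composing with the already-established transitions on $\underline{\Fieldspace}_{\mathcal{X}}$. Coverage by the open subfunctors $\underline{\pi}^{-1}(\underline{V}_{\mathcal{X},\phi})$ is inherited from the covering of $\underline{\Fieldspace}_{\mathcal{X}}$ by the $\underline{V}_{\mathcal{X},\phi}$. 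The principal technical obstacle is the careful verification of smooth dependence of parallel transport on its endpoint in the Fréchet supersmooth category; this requires expanding $X$ in $\lambda$-monomials à la Example~\ref{ex:PointFunctorRmn}, applying the underlying ODE result on $\Red{M}$ componentwise, and checking $\cO_C$-linearity of the differential fiberwise, but no substantially new ingredient beyond the classical parametric ODE theorem is needed.
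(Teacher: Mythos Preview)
Your proposal is correct and follows essentially the same approach as the paper: the proposition is stated as a consequence of the discussion preceding it, which constructs the trivializations $\underline{e}_{\mathcal{X},\phi}$ via parallel transport with respect to $\nablabar$, identifies the fiber with $\underline{F}_\phi$ through the component-field decomposition of Lemma~\ref{lemma:DefinitionComponentEquations}, and observes that the transitions are fiberwise $\cO_C$-linear because parallel transport is. You spell out more explicitly the verification of the three defining conditions and the smooth dependence of parallel transport on the endpoint, which the paper leaves implicit at this stage (it returns to related estimates in the proof of Proposition~\ref{prop:SmoothSection}), but the underlying argument is the same.
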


The maps \(\underline{\mathcal{S}}(C)\colon (\chi, \Phi)\mapsto (\chi, \Phi, \DJBar\Phi)\) yield a functor morphism \(\underline{\mathcal{S}}\colon\underline{\Fieldspace}_{\mathcal{X}}\to\underline{\Targetbundle}_{\mathcal{X}}\).
It satisfies \(\underline{\pi}\circ\underline{\mathcal{S}}=\id_{\underline{\Fieldspace}_{\mathcal{X}}}\), that is, \(\underline{\mathcal{S}}\) is a section of \(\underline{\Targetbundle}_{\mathcal{X}}\).
\begin{prop}\label{prop:SmoothSection}
	The functor morphism \(\underline{\mathcal{S}}\colon \underline{\Fieldspace}_{\mathcal{X}}\to \underline{\Targetbundle}_{\mathcal{X}}\) is a smooth section.
\end{prop}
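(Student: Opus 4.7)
The plan is to reduce the claim to a local statement in the charts constructed in the previous subsection and then verify smoothness componentwise using Proposition~\ref{prop:ComponentsOfDJBar}. The section property $\underline{\pi} \circ \underline{\mathcal{S}} = \id_{\underline{\Fieldspace}_{\mathcal{X}}}$ is immediate from the definition of $\underline{\mathcal{S}}$; everything else amounts to showing that, for every $C \in \cat{SPoint}$, the coordinate expression of $\underline{\mathcal{S}}(C)$ in the charts $\underline{v}_{\mathcal{X},\phi}$ and $\underline{e}_{\mathcal{X},\phi}$, namely
\[
	(\chi, X) \mapsto \bigl(\id_{\dual{\cD}} \otimes P^{\nablabar}_{\exp_{\Phi_{C,\chi}}tX}\bigr)^{-1} \DJBar\bigl(\exp_{\Phi_{C,\chi}} X\bigr),
\]
is a smooth map of Fréchet superdomains whose differential at every point is $\cO_C$-linear.

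First I would decompose the target into its four component fields via Lemma~\ref{lemma:DefinitionComponentEquations} and use Proposition~\ref{prop:ComponentsOfDJBar} to read off each component of $\DJBar(\exp_{\Phi_{C,\chi}}X)$ as a polynomial algebraic expression in the component fields of $\chi$, of $\exp_{\Phi_{C,\chi}} X$, and of $(\exp_{\Phi_{C,\chi}}X)^*\targetACI$, together with first- and second-order covariant derivatives along $\Red{M}$. The super Riemann surface data of $M_\chi$ (Wess--Zumino frame, $\gamma$, $\omega^{LC}$, $\Dirac$, $\Div_m F_\alpha$, etc.) depend polynomially on the finite-dimensional parameter $\chi \in \underline{\mathcal{X}}(C)$, while the exponential map and Riemannian parallel transport on $N$ are smooth in both the base point and the fibre variable.

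Assembling these ingredients, each component of $\underline{\mathcal{S}}$ in local coordinates is the composition of pointwise algebraic operations on finite jets, substitution with the smooth map $\exp$, and continuous linear differential operators between the relevant $C^k$-Fréchet spaces. Such a composition is smooth in the Fréchet sense by the standard calculus for mapping spaces, compare~\cite{KM-CSGA}, and the differentials are $\cO_C$-linear because the whole construction is a natural transformation in $C$ and is therefore compatible with any base change $C' \to C$.

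The main obstacle is the simultaneous $\chi$-dependence of the super Riemann surface $M_\chi$ and of the connection $\nablabar^{\cotangent{M}\otimes\Phi^*\tangent{N}}$ entering Lemma~\ref{lemma:DefinitionComponentEquations}, since the local trivializations of $\underline{\Targetbundle}_{\mathcal{X}}$ used for different gravitinos live a priori over different super Riemann surfaces. I would absorb this into the bookkeeping by identifying every such trivialization with component field data on the fixed underlying Riemann surface $\Red{M}$ via the isomorphism derived at the end of the previous subsection, at which point the $\chi$-dependence reduces to the polynomial dependence of the Wess--Zumino frame on $\chi$ established in~\cite[Chapter~11]{EK-SGSRSSCA}, finishing the proof.
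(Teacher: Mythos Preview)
Your approach is essentially the paper's: localize via the charts \(\underline{v}_{\mathcal{X},\phi}\) and \(\underline{e}_{\mathcal{X},\phi}\), express the local representative \(\underline{\mathcal{F}}_\phi(C)\) through the component field decomposition of Lemma~\ref{lemma:DefinitionComponentEquations} and Proposition~\ref{prop:ComponentsOfDJBar}, and observe that the result is built from the smooth maps \(\exp\) and \(P^{\nablabar}\) together with differential operators on \(\Red{M}\). The paper is more explicit about one point you glossed over, namely how the inverse parallel transport \((\id_{\dual{\cD}}\otimes P^{\nablabar})^{-1}\) interacts with the operators \(\nablabar_{F_\alpha}\) and \(\DBarLaplace\) used to \emph{define} the component fields; it introduces correction terms \(\nablabar P^{-1}(Y)\) and writes out each of the four components of \(\underline{\mathcal{F}}_\phi(C)\) accordingly.

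One factual correction: you say the components involve ``first- and second-order covariant derivatives along \(\Red{M}\)'', but the paper's computation shows that after reduction via \(i_\chi^*\) only \emph{first}-order operators in the \(\Red{M}\)-directions survive (the apparent second order in \(\DBarLaplace\) is in the odd \(F_\alpha\)-directions and collapses under \(i_\chi^*\) to expressions like \(\Dirac\psi\)). This does not affect Fr\'echet smoothness here, but the paper explicitly records it because it is exactly what is needed later to extend \(\underline{\mathcal{F}}_\phi(C)\) to a map \(W^{k,q}\to W^{k-1,q}\) in Section~\ref{Sec:ModuliSpace}.
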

\begin{proof}
	It only remains to show that \(\underline{\mathcal{S}}\) is smooth.
	Locally the section \(\underline{\mathcal{S}}\) is given by a map \(\underline{\mathcal{F}}_\phi(C)\colon \underline{\mathcal{X}}(C)\times\underline{U}_\phi(C)\to \underline{F}_\phi(C)\).
	Let \(X\in\VSec{\Phi_{C, \chi}^*\tangent{N}}\) be the vector field determined by the component fields \((\xi, \zeta, \sigma)\in\underline{U}_\phi(C)\) with respect to the embedding \(i_\chi\colon \Red{M}\times C\to M_\chi\).
	Then \(\underline{\mathcal{F}}_\phi(C)\) maps the tuple \((\chi, \xi, \zeta, \sigma)\) to the component fields of
	\begin{equation}
		{\left(\id_{\dual{\cD}}\otimes P_{\exp_{\Phi_{C, \chi}} tX}^{\nablabar}\right)}^{-1}\DJBar \exp_{\Phi_{C, \chi}} X.
	\end{equation}
	The parallel transport along \(\exp_{\Phi_{C, \chi}} X\) is \(\cO_M\)-linear and depends smoothly on \(X\).
	Hence there exist for every vector field \(Y\) smooth linear maps
	\begin{equation}
		\nablabar P(Y)\colon \VSec{\Phi_{C, \chi}^*\tangent{N}}\to \VSec{{\left(\exp_{\Phi_{C, \chi}} X\right)}^*\tangent{N}}
	\end{equation}
	such that
	\begin{equation}
		\nablabar_Y \left(P_{\exp_{\Phi_{C, \chi}} tX} Z\right) = P_{\exp_{\Phi_{C, \chi}} tX}\left(\nabla_Y Z\right) + \left(\nablabar P(\nabla_Y X)\right) Z.
	\end{equation}
	The dependence of \(\nablabar P(Y)\) on \(Y\) is smooth.
	We denote by \(\nablabar P^{-1}(Y)\) the derivative of \({\left(P_{\exp_{\Phi_{C, \chi}} tX}^{\nablabar}\right)}^{-1}\) and higher derivatives by \(\nablabar^k P^{-1}(Y)\).
	Then, \(\underline{\mathcal{F}}_\phi(C)(\chi, \xi, \zeta, \sigma)\) is given by the following component fields
	\begin{align}
		\MoveEqLeftR
		i_\chi^*{\left(\id_{\dual{\cD}}\otimes P_{\exp_{\Phi_{C, \chi}} tX}^{\nablabar}\right)}^{-1}\DJBar \exp_{\Phi_{C, \chi}} X
		= {\left(\id_{\dual{S}}\otimes P_{\exp_{\varphi_{C, \chi}} ti_\chi^*X}^{\nablabar}\right)}^{-1}i_\chi^*\DJBar\exp_{\Phi_{C, \chi}} X \\
		\begin{split}
			\MoveEqLeftR
			\frac12\Tr_{\dual{g}_S}i_\chi^*\nablabar^{\cotangent{M}\otimes\Phi_{C,\chi}^*\tangent{N}}{\left(\id_{\dual{\cD}}\otimes P_{\exp_{\Phi_{C, \chi}} tX}^{\nablabar}\right)}^{-1}\DJBar \exp_{\Phi_{C, \chi}} X \\
			={}& \frac12 {\left(P_{\exp_{\varphi_{C, \chi}} ti_\chi^*X}^{\nablabar}\right)}^{-1}i_\chi^*\Tr_{\dual{g}_S}i_\chi^*\nablabar^{\cotangent{M}\otimes\Phi_\chi^*\tangent{N}}\DJBar\exp_{\Phi_{C, \chi}} X \\
				&+ \frac14\varepsilon^{\alpha\beta}i_\chi^*\left(\nablabar P^{-1}(\nabla_{F_\alpha} X)\right)i_\chi^*\left<F_\beta, \DJBar\exp_{\Phi_{C, \chi}}X\right>
		\end{split} \\
		\begin{split}
			\MoveEqLeftR
			\frac12\Tr_{\dual{g}_S}\left(\id_{\dual{S}}\otimes\gamma\otimes\id_{\varphi_{C,\chi}^*\tangent{N}}\right)i_\chi^*\nablabar^{\cotangent{M}\otimes\Phi_{C,\chi}^*\tangent{N}}{\left(\id_{\dual{\cD}}\otimes P_{\exp_{\Phi_{C, \chi}} tX}^{\nablabar}\right)}^{-1}\DJBar \exp_{\Phi_{C, \chi}} X \\
			={}& \frac12 \left(\id_{\cotangent{\Smooth{M}}}\otimes{\left(P_{\exp_{\varphi_{C, \chi}} ti_\chi^*X}^{\nablabar}\right)}^{-1}\right) \\
				&\cdot i_\chi^*\Tr_{\dual{g}_S}\left(\id_{\dual{S}}\otimes\gamma\otimes\id_{\varphi_{C,\chi}^*\tangent{N}}\right)i_\chi^*\nablabar^{\cotangent{M}\otimes\Phi_{C,\chi}^*\tangent{N}}\DJBar\exp_{\Phi_{C, \chi}} X \\
				&+ f^k\otimes \frac14\varepsilon^{\alpha\mu}\tensor{\gamma}{_k_\mu^\beta}i_\chi^*\left(\nablabar P^{-1}(\nabla_{F_\alpha} X)\right)i_\chi^*\left<F_\beta, \DJBar\exp_{\Phi_{C, \chi}}X\right>
		\end{split} \displaybreak[0]\\
		\begin{split}
			\MoveEqLeftR
			-\frac12i_\chi^*\DBarLaplace{\left(\id_{\dual{\cD}}\otimes P_{\exp_{\Phi_{C, \chi}} tX}^{\nablabar}\right)}^{-1}\DJBar \exp_{\Phi_{C, \chi}} X \\
			={}& -\frac12 {\left(\id_{\dual{S}}\otimes P_{\exp_{\varphi_{C, \chi}} ti_\chi^*X}^{\nablabar}\right)}^{-1}i_\chi^*\DBarLaplace\DJBar\exp_{\Phi_{C, \chi}} X \\
				&+ \frac14\varepsilon^{\alpha\beta}i_\chi^*\left(\id_{\dual{S}}\otimes \nablabar P^{-1}(\nabla_{F_\alpha} X)\right)i_\chi^*\nablabar_{F_\beta} \DJBar\exp_{\Phi_{C, \chi}}X \\
				&+ \frac14\varepsilon^{\alpha\beta}i_\chi^*\left(\id_{\dual{S}}\otimes \nablabar^2 P^{-1}(\DLaplace X)\right)i_\chi^* \DJBar\exp_{\Phi_{C, \chi}}X
		\end{split}
	\end{align}
	Component fields of \(\exp_{\Phi_{C, \chi}} X\) can be obtained from the component fields of \(X\) and smooth derivatives of the exponential maps.
	Together with the expressions in Proposition~\ref{prop:ComponentsOfDJBar} for the components of \(\DJBar\) and the derivatives of \(P_{\exp_{\Phi_{C, \chi}} tX}\) the above equations yield expressions for \(\underline{\mathcal{F}}_\phi(C)(\chi, \xi, \zeta, \sigma)\).

	In order to obtain the coefficients of in the expansion of the component fields in terms of the generators \(\lambda^\kappa\) of \(C\) one needs to take further derivatives in direction of \(\lambda^\kappa\).
	The resulting expressions are of the same form:
	a combination of differentials of the smooth maps \(\exp_{\Phi_{C, \chi}}\) and \(P_{\exp_{\Phi_{C, \chi}} tX}^{\nablabar}\) and at most one differential operator in directions tangential to \(\Red{M}\) of order one.
	Consequently, the resulting expressions for \(\underline{\mathcal{F}}_\phi(C)\) are smooth.
\end{proof}
 
\section{The moduli space of super \texorpdfstring{\(\targetACI\)}{J}-holomorphic curves}\label{Sec:ModuliSpace}
In this section we fix a closed compact Riemann surface \((\Red{M}, \Red{g})\) with spinor bundle~\(\Red{S}\) and an almost Kähler manifold \((N, \omega, \targetACI)\).
Recall that for any gravitino \(\chi\in\underline{\mathcal{X}}(C)\) the triple \((\Red{M}, \Red{g}, \chi)\) determines a super Riemann surface \(M_\chi\) over \(C\) together with an embedding \(i_\chi\colon \Red{M}\times C\to M_\chi\) which is the identity on the topological spaces.
The super \(\targetACI\)-holomorphic curves from \(M_\chi\) to \(N\) form a subset of \(\underline{\Fieldspace}_{\mathcal{X}}(C)\).
This leads to the following definition
\begin{defn}
	Let \(\mathcal{X}\subset\VSec{\cotangent{\Red{M}}\otimes \Red{S}}\) and \(A\in H_2(N)=H_2(N, \R)\).
	We call the subfunctor
	\begin{equation}
		\begin{split}
			\underline{\mathcal{M}(\mathcal{X}, A)}\colon \cat{SPoint}^{op}&\to \cat{Set} \\
			C & \mapsto \Set{(\chi, \Phi)\in \underline{\mathcal{X}}(C)\times\Hom_C(M_\chi, N)\given \DJBar\Phi=0 \text{ and } [\im\Red{\Phi}] = A}
		\end{split}
	\end{equation}
	of \(\underline{\Fieldspace}_{\mathcal{X}}\) the moduli space of \(\targetACI\)-holomorphic curves with fixed homology class \(A\) of the image of \(\Red{\Phi}\colon\Red{M}\to N\).
\end{defn}
Notice that \(\underline{\mathcal{M}(\mathcal{X}, A)}(\R)\) is the moduli space of \(\targetACI\)-holomorphic curves from \((\Red{M}, \Red{g})\) to the almost Kähler manifold \((N, \omega, \targetACI)\) whose homology class is given by \(A\).
We will  show that under certain conditions on the target \((N, \omega, \targetACI)\) the moduli space \(\underline{\mathcal{M}(\mathcal{X}, A)}\) is a finite dimensional supermanifold.
The proof proceeds by extending the construction of the moduli space of \(\targetACI\)-holomorphic curves \(\phi\colon \Red{M}\to N\) as presented, for example, in~\cite{McDS-JHCST} to super Riemann surfaces.
We use that \(\underline{\mathcal{M}(\mathcal{X}, A)}\) is the zero locus of the section \(\underline{\mathcal{S}}\) induced by \(\DJBar\).
Intuitively, if \(\underline{\mathcal{S}}\) is transversal to the zero section at a map~\(\Phi\), the preimage of zero should inherit a supermanifold structure in the neighbourhood of \(\Phi\) from the one on \(\underline{\Fieldspace}_{\mathcal{X}}\).
Using the component field approach, we will show that transversality of \(\underline{\mathcal{S}}\) is equivalent to the surjectivity of the differential operators~\(D_\phi\) and \(\Red{\Dirac}^{1,0}\) along \(\phi=\Red{\Phi}\), which can be interpreted as a condition on the target \((N, \omega, \targetACI)\).
Completing the charts of \(\underline{\Fieldspace}_{\mathcal{X}}\) allows then to calculate the dimension of the moduli space applying an index theorem to \(D_\phi\) and \(\Red{\Dirac}^{1,0}\).
In this Sobolev completion we then apply an implicit function theorem to a local expression of \(\underline{\mathcal{S}}\) and show smoothness of the solutions using elliptic regularity theory.

\subsection{Transversality}
In this Section we want to reduce the question whether \(\underline{\mathcal{S}}\) is transversal to the zero-section to an analytical condition for the almost Kähler target \((N, \omega, \targetACI)\).
To this end we calculate \(\tangent{\underline{\mathcal{S}}}(C)\) at points \(\Phi_C\), where \(\Phi_C\) is the extension of a \(\targetACI\)-holomorphic curve \(\phi\colon \Red{M}\to N\) to the super Riemann surface \(M\) determined by \((\Red{g}, \Red{S}, \chi=0)\).

The zero-section of \(\underline{\Targetbundle}_{\mathcal{X}}\) is the smooth functor morphism \(\underline{\mathcal{Z}}\colon \underline{\Fieldspace}_{\mathcal{X}}\to \underline{\Targetbundle}_{\mathcal{X}}\) given by the maps \(\underline{\mathcal{Z}}(C)\colon (\chi, \tilde{\Phi})\mapsto (\chi, \tilde{\Phi}, 0)\).
For every \(C\in\cat{SPoint}\) we have maps
\begin{align}
	\tangent{\underline{\mathcal{S}}(C)}\colon \tangent{\left(\underline{\Fieldspace}_{\mathcal{X}}(C)\right)}&\to \tangent{\left(\underline{\Targetbundle}_{\mathcal{X}}(C)\right)}, &
	\tangent{\underline{\mathcal{Z}}(C)}\colon \tangent{\left(\underline{\Fieldspace}_{\mathcal{X}}(C)\right)} &\to \tangent{\left(\underline{\Targetbundle}_{\mathcal{X}}(C)\right)}.
\end{align}
We say that \(\underline{\mathcal{S}}\) is transversal to the zero section at the \(\targetACI\)-holomorphic curve \(\phi\colon \Red{M}\to N\) if for all \(C\) the union of the subspaces
\begin{align}
	&\im \tangent[(0,\Phi_C)]{\underline{\mathcal{S}}(C)} &
	&\im \tangent[(0,\Phi_C)]{\underline{\mathcal{Z}}(C)}
\end{align}
generate \(\tangent[(0,\Phi_C, 0)]{\left(\underline{\Targetbundle}_{\mathcal{X}}(C)\right)}\).

We use the charts \(\underline{\mathcal{X}}\times\underline{U}_\phi\) of \(\underline{\Fieldspace}_{\mathcal{X}}\) and the trivialization \(\underline{\mathcal{X}}\times\underline{U}_\phi\times\underline{F}_\phi\) of \(\underline{\Targetbundle}_{\mathcal{X}}\).
Recall that
\begin{equation}
	\underline{U}_\phi(C)
	\subset \underline{W}_\phi(C)
	= \VSec{\phi^*\tangent{N}}{\otimes\left(\cO_C\right)}_0 \oplus \VSec{\Red{\dual{S}}\otimes\phi^*\tangent{N}}\otimes{\left(\cO_C\right)}_1\oplus\VSec{\phi^*\tangent{N}}{\otimes\left(\cO_C\right)}_0
\end{equation}
and
\begin{equation}
	\begin{split}
		\underline{F}_\phi(C)
		={}& \VSec{\dual{\Red{S}}\otimes\phi^*\tangent{N}}^{0,1}\otimes{\left(\cO_C\right)}_1 \oplus \VSec{\phi^*\tangent{N}}\otimes{\left(\cO_C\right)}_0 \\
			& \oplus\VSec{\cotangent{\Red{M}}\otimes\phi^*\tangent{N}}^{0,1}\otimes{\left(\cO_C\right)}_0\oplus\VSec{\dual{\Red{S}}\otimes\phi^*\tangent{N}}^{0,1}\otimes{\left(\cO_C\right)}_1.
	\end{split}
\end{equation}
The section \(\underline{\mathcal{S}}\) is locally given by \(\underline{\mathcal{F}}_\phi\colon \underline{\mathcal{X}}\times\underline{U}_\phi\to \underline{F}_\phi\).
We denote the derivative of \(\underline{\mathcal{F}}_\phi(C)\) at \((0,0,0,0)\) by
\begin{equation}
	\underline{\mathcal{F}}'_\phi(C)\colon \underline{\mathcal{X}}(C)\times\underline{W}_\phi(C) \to \underline{F}_\phi(C).
\end{equation}
Consequently, \(\underline{\mathcal{S}}\) is transversal to the zero-section at the \(\targetACI\)-holomorphic curve \(\phi\colon \Red{M}\to N\) if and only if \(\underline{\mathcal{F}}'_\phi(C)\) is surjective for all \(C\).

In order to calculate \(\underline{\mathcal{F}}'_\phi(C)\) we need some more notation.
For \(\varphi_C=\phi\times\id_C\colon \Smooth{M}\to N\) let us define
\begin{equation}
	\begin{split}
		D_{\varphi_C}\colon \VSec{\varphi_C^*\tangent{N}}&\to {\VSec{\cotangent{\Smooth{M}}\otimes\varphi_C^*\tangent{N}}}^{0,1} \\
		\xi&\mapsto\frac12\left(1+\ACI\otimes\targetACI\right)\left(\nabla\xi - \frac12\id_{\cotangent{\Smooth{M}}}\otimes\left(\targetACI\left(\nabla_\xi \targetACI\right)\right)\differential\varphi_C\right)
	\end{split}
\end{equation}
Notice that \(D_{\varphi_C}\) is the linearisation of \(\DelJBar\) at \(\varphi_C\), see~\cite[Proposition~3.1.1]{McDS-JHCST}.

For \(\zeta\in\VSec{\dual{S}\otimes\varphi_C^*\tangent{N}}\) let
\begin{align}
	\zeta^{1,0}&=\frac12\left(1-\ACI\otimes\targetACI\right)\zeta
	\in\VSec{\dual{S}\otimes_\C\varphi_C^*\tangent{N}}, \\
	\zeta^{0,1}&=\frac12\left(1+\ACI\otimes\targetACI\right)\zeta
	\in{\VSec{\dual{S}\otimes\varphi_C^*\tangent{N}}}^{0,1}.
\end{align}
Now define
\begin{align}
	\Dirac^{1,0}\colon \VSec{\dual{S}\otimes_\C\varphi_C^*\tangent{N}} &\to {\VSec{\dual{S}\otimes\varphi_C^*\tangent{N}}}^{0,1} \\
	\zeta^{1,0} &\mapsto\frac12\left(1 + \ACI\otimes\targetACI\right)\Dirac\zeta^{1,0} \\
	\Dirac^{0,1}\colon {\VSec{\dual{S}\otimes\varphi_C^*\tangent{N}}}^{0,1} &\to \VSec{\dual{S}\otimes_\C\varphi_C^*\tangent{N}} \\
	\zeta^{0,1} &\mapsto \frac12\left(1 - \ACI\otimes\targetACI\right)\Dirac\zeta^{0,1}
\end{align}
By Equation~\eqref{eq:CommuteIJDirac}, we know that
\begin{equation}
\label{eq:DiracOperatorsProjectors}
	\begin{pmatrix}
		{\left(\Dirac\zeta\right)}^{1,0} \\
		{\left(\Dirac\zeta\right)}^{0,1}
	\end{pmatrix}
	=
	\begin{pmatrix}
		\frac12 \ACI\gamma^k\otimes\left(\nabla_k\targetACI\right) & \Dirac^{0,1} \\
		\Dirac^{1,0} & -\frac12\ACI\gamma^k\otimes\left(\nabla_k\targetACI\right)
	\end{pmatrix}
	\begin{pmatrix}
		\zeta^{1,0} \\
		\zeta^{0,1}
	\end{pmatrix}
\end{equation}
where the diagonal terms vanish if \(N\) is Kähler.

\begin{prop}\label{prop:DifferentialOfF}
	The differential of \(\underline{\mathcal{F}}_\phi(C)\) at \((0,0,0,0)\) is given by
	\begin{equation}
		\begin{split}
			\underline{\mathcal{F}}'_\phi(C)\colon \underline{\mathcal{X}}(C)\times\underline{W}_\phi(C) &\to \underline{F}_\phi(C) \\
			(\rho, \xi, \zeta, \sigma) &\mapsto
			\left(\zeta^{0,1},
			\frac14\sigma,
			-D_{\varphi_C}\xi,
			-\hat\Dirac_{\varphi_C}\zeta + 2\left<\vee Q\rho,\differential\varphi_C\right>\right)
		\end{split}
	\end{equation}
	where
	\begin{equation}
		\hat\Dirac_{\varphi_C} \zeta
		= \Dirac^{1,0}\zeta^{1,0} - \frac12\left(\ACI \gamma^k\otimes\left(\nabla_k\targetACI\right)\zeta^{0,1} - \Tr_g\left<\left(\gamma\otimes\id_{\varphi_C^*\tangent{N}}\right)\zeta^{0,1}, \targetACI\varphi_C^*\nabla\targetACI\right>\differential{\varphi_C}\right).
	\end{equation}
\end{prop}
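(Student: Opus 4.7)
The plan is to compute the derivative directly from the explicit description of $\underline{\mathcal{F}}_\phi(C)$ worked out in Proposition~\ref{prop:SmoothSection}, exploiting the fact that we are linearising at a zero of $\DJBar$. At the base point, the embedding $i_{\chi=0}$ is the standard one, the extension $\Phi_C$ has component fields $\varphi_C=\phi\times\id_C$, $\psi_C=0$, $F_C=0$, and by Corollary~\ref{cor:SJCComponentEquations} together with $\DelJBar\phi=0$ we have $\DJBar\Phi_C=0$. In the expansion of the component fields of $(\id_{\dual{\cD}}\otimes P^{\nablabar}_{\exp_{\Phi_{C,\chi}}tX})^{-1}\DJBar\exp_{\Phi_{C,\chi}}X$ given in the proof of Proposition~\ref{prop:SmoothSection}, every term involving the derivatives $\nablabar P^{-1}$ of the parallel transport is multiplied by $\DJBar\exp_{\Phi_{C,\chi}}X$ or $\nablabar\DJBar\exp_{\Phi_{C,\chi}}X$. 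Since $\nablabar P^{-1}$ itself vanishes at $X=0$ (where $P^{-1}=\id$) and $\DJBar\Phi_C\equiv 0$, its covariant derivatives also vanish at the origin, so the first derivatives of all such correction terms vanish. Consequently $\underline{\mathcal{F}}'_\phi(C)$ is the pointwise linearisation, at the origin, of the four component expressions of $\DJBar\Phi$ listed in Proposition~\ref{prop:ComponentsOfDJBar}.

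Next I would plug in the infinitesimal data. A first-order perturbation of $\Phi$ by $X$ with components $(\xi,\zeta,\sigma)$, together with a gravitino perturbation by $\rho$, linearises the component fields of $\Phi$ to $(\varphi_C+\xi,\zeta,\sigma)$ and the super Riemann surface data to $\chi=\rho$. Using that $\mathfrak{j}=\langle\psi,\varphi^*\nabla\targetACI\rangle$ vanishes at $\psi=0$, that $\tensor[_{34}]{\mathfrak{j}}{}$ likewise vanishes when $\psi=0$ and $F=0$, and that any monomial of total degree at least two in $(\chi,\psi,F)$ contributes nothing to first order, the four components reduce considerably. The zeroth-order component becomes $\frac12(1+\ACI\otimes\targetACI)\zeta=\zeta^{0,1}$; the $F$-type first-order component becomes $\tfrac14\sigma$; the $\gamma$-twisted first-order component becomes the linearisation of $-\DelJBar\varphi$ at $\phi$, which by definition equals $-D_{\varphi_C}\xi$.

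The fourth component requires the most care. The only surviving summands in Proposition~\ref{prop:ComponentsOfDJBar} are the linearisations of $\Dirac\psi$ (in direction $\zeta$), of $-2\langle\vee Q\chi,\differential\varphi\rangle$ (in direction $\rho$), and of the mixed term $\tfrac12\Tr_g\langle(\gamma\otimes\id)\psi,\targetACI\varphi^*\nabla\targetACI\rangle\differential\varphi$ (in direction $\zeta$, where $\differential\varphi$ specialises to $\differential\phi$). One then pushes the outer projector $-\tfrac12(1+\ACI\otimes\targetACI)$ through each of these. For the Dirac term Equation~\eqref{eq:CommuteIJDirac} directly gives $\tfrac12(1+\ACI\otimes\targetACI)\Dirac\zeta=\Dirac^{1,0}\zeta^{1,0}-\tfrac12\ACI\gamma^k\otimes(\nabla_k\targetACI)\zeta^{0,1}$. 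For the gravitino term, the same commutation trick used in Equation~\eqref{eq:CommuteIJGravitino} transfers $(1+\ACI\otimes\targetACI)$ from the exterior of $\langle\vee Q\rho,\cdot\rangle$ into $(1-\ACI\otimes\targetACI)\differential\phi$; since $\phi$ is $\targetACI$-holomorphic this equals $2\differential\phi$, producing the summand $+2\langle\vee Q\rho,\differential\varphi_C\rangle$. For the mixed $\nabla\targetACI$-term the same principle, together with the anti-commutation $\targetACI(\nabla\targetACI)=-(\nabla\targetACI)\targetACI$, converts it into $+\tfrac12\Tr_g\langle(\gamma\otimes\id)\zeta^{0,1},\targetACI\varphi_C^*\nabla\targetACI\rangle\differential\varphi_C$. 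Collecting the three contributions according to the definition of $\hat\Dirac_{\varphi_C}$ yields exactly $-\hat\Dirac_{\varphi_C}\zeta+2\langle\vee Q\rho,\differential\varphi_C\rangle$.

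The main obstacle is the bookkeeping in this last step: carefully verifying that the projector $(1+\ACI\otimes\targetACI)$ commutes through each of the three tensorial expressions producing precisely the right $\zeta^{1,0}$ versus $\zeta^{0,1}$ splittings and the right factors of two, using both the $\targetACI$-holomorphicity of $\phi$ and the symmetry properties of $Q\rho$, $\gamma$, and $\nabla\targetACI$. The rest of the argument is essentially a direct substitution into the formulas established in Proposition~\ref{prop:ComponentsOfDJBar} and Proposition~\ref{prop:SmoothSection}.
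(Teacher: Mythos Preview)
Your proposal is correct and follows essentially the same approach as the paper: linearise the four component expressions of Proposition~\ref{prop:ComponentsOfDJBar} at the base point \((\chi,\psi,F)=(0,0,0)\), after observing that the parallel-transport corrections from Proposition~\ref{prop:SmoothSection} drop out to first order. The paper organises the same computation via an explicit one-parameter family \(\Phi_t=\exp_{\Phi_C}tY\) and differentiates at \(t=0\), which amounts to the identity \(\left.\tfrac{\d}{\d t}\right|_{t=0}(P^{\nablabar}_{\exp tY})^{-1}\!A_t=\nablabar_{\partial_t}A_t|_{t=0}\) rather than your product-rule argument; both routes use the vanishing \(\psi_0=0\), \(\mathfrak{j}_0=0\), \(\tensor[_{34}]{\mathfrak{j}}{_0}=0\) and \(\DelJBar\phi=0\) in exactly the places you indicate, and the fourth component is handled by the same commutation identity~\eqref{eq:CommuteIJDirac}.
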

\begin{proof}
	We will treat the summands of
	\begin{equation}
		\underline{\mathcal{F}}'_\phi(C)(\rho, \xi, \zeta, \sigma)
		= \underline{\mathcal{F}}'_\phi(C)(0, \xi, \zeta, \sigma) + \underline{\mathcal{F}}'_\phi(C)(\rho, 0, 0, 0)
	\end{equation}
	separately.
	Both summands can be obtained from variation of the component fields of \(\DJBar\Phi\) obtained in~\ref{prop:ComponentsOfDJBar} with respect to \((\varphi, \psi, F)\) and \(\chi\) respectively.

	For the first summand, let \(M\) be the family of super Riemann surface over \(C\) determined by \((\Red{g}, \Red{S}, \chi=0)\) and \(\Phi_C\colon M\to N\) the map whose component fields are given by \(\varphi_C=\phi\times C\), \(\psi_C=0\) and \(F_C=0\).
	We denote by \(Y\in\VSec{\Phi_C^*\tangent{N}}\) the vector field whose component fields are given by \((\xi, \zeta, \sigma)\in\underline{U}_\phi(C)\) with respect to the embedding \(i\colon \Red{M}\times C\to M_\chi\).
	Furthermore, denote the component fields of \(\Phi_t=\exp_{\Phi_C} tY\) by \(\varphi_t\), \(\psi_t\) and \(F_t\), where \(\Phi_0=\Phi_C\), \(\varphi_0 = \phi\times\id_C\), \(\psi_0=0\), \(F_0=0\).
	Then
	\begin{align}
		\left.\frac{\d}{\d{t}}\right|_{t=0} \varphi_t
		&= \left.\frac{\d}{\d{t}}\right|_{t=0} \left(\exp_{\Phi_C}tY\right)\circ i
		= i^*Y
		= \xi, \\
		\begin{split}
			\left.\nabla^{\dual{S}\otimes\varphi_t^*\tangent{N}}_{\partial_t}\right|_{t=0}\psi_t
			&= s^\alpha\otimes i^*\left(\left.\nabla_{\partial_t}\right|_{t=0}F_\alpha\Phi_t\right)
			= s^\alpha\otimes i^*\left(\left.\nabla_{F_\alpha}\partial_t\left(\exp_{\Phi_C} tY\right)\right|_{t=0}\right) \\
			&= s^\alpha\otimes i^*\nabla_{F_\alpha}Y
			= \zeta,
		\end{split} \\
		\begin{split}
			\left.\nabla_{\partial_t}\right|_{t=0}F_t
			&= \left.\nabla_{\partial_t}\right|_{t=0}i^*\DLaplace \Phi_t
			= \left.\nabla_{\partial_t}\right|_{t=0}i^*\varepsilon^{\alpha\beta}\nabla_{F_\alpha}F_\beta\Phi_t \\
			&= i^*\varepsilon^{\alpha\beta}\nabla_{F_\alpha}\nabla_{F_\beta}\left.\partial_t\left(\exp_{\Phi_C} tY\right)\right|_{t=0}
			= i^*\varepsilon^{\alpha\beta}\nabla_{F_\alpha}\nabla_{F_\beta}Y
			= \sigma.
		\end{split}
	\end{align}
	We have used here that the connection \(\nabla\) on \(N\) is the Levi-Civita connection and \(\psi_0=0\).
	Notice that \(\psi_0=0\) also implies
	\begin{align}
		\left.\nabla^{\dual{S}\otimes\varphi_t^*\tangent{N}}_{\partial_t}\right|_{t=0}\psi^{1,0}_t
		&= \zeta^{1,0}, &
		\left.\nabla^{\dual{S}\otimes\varphi_t^*\tangent{N}}_{\partial_t}\right|_{t=0}\psi^{0,1}_t
		&= \zeta^{0,1}.
	\end{align}
	We will use the same arguments for the summands of \(\underline{\mathcal{F}}'_\phi(C)(0, \xi, \zeta, \sigma)\in\underline{F}_\phi(C)\).
	For the first summand we have:
	\begin{equation}
		\begin{split}
			\MoveEqLeft
			\left.\frac{\d}{\d{t}}\right|_{t=0} i^*{\left(\id_{\dual{\cD}}\otimes P_{\exp_{\Phi_C} tY}^{\nablabar}\right)}^{-1}\DJBar\exp_{\Phi_C} tY
			= \left.i^*\nablabar^{\dual{\cD}\otimes\Phi_t^*\tangent{N}}_{\partial_t}\right|_{t=0}\DJBar\Phi_t \\
			&= \left.\nablabar^{\dual{S}\otimes\varphi_t^*\tangent{N}}_{\partial_t}\right|_{t=0}i^*\DJBar\Phi_t
			= \left.\nabla^{\dual{S}\otimes\varphi_t^*\tangent{N}}_{\partial_t}\right|_{t=0}\psi_t^{0,1}
			= \zeta^{0,1}
		\end{split}
	\end{equation}
	Here we have used~\ref{prop:ComponentsOfDJBar} and furthermore \(\targetACI\left(\nabla_\xi\targetACI\right)\DJBar\Phi_0=0\).
	For the second summand we obtain by the same arguments:
	\begin{equation}
		\begin{split}
			\MoveEqLeft
			\left.\frac{\d}{\d{t}}\right|_{t=0} \frac12\Tr_{\dual{g}_S}i^*\nablabar^{\cotangent{M}\otimes\Phi_C^*\tangent{N}}{\left(\id_{\dual{\cD}}\otimes P_{\exp_{\Phi_C} tY}^{\nablabar}\right)}^{-1}\DJBar \exp_{\Phi_C} tY \\
			&= \frac12\left.\nabla_{\partial_t}\right|_{t=0} \Tr_{\dual{g}_S}i^*\nablabar^{\cotangent{M}\otimes\Phi_C^*\tangent{N}}\DJBar \Phi_t  \\
			&= \left.\nabla_{\partial_t}\right|_{t=0}\left(\frac14F_t + \frac18\Tr_{\dual{g}_S}\left(\id_{\dual{S}}\otimes\mathfrak{j}_t\targetACI + \ACI\otimes\mathfrak{j}_t\right)\psi_t\right) \\
			&= \frac14\sigma
		\end{split}
	\end{equation}
	Here we have used that \(i^*\nablabar_{\partial_t}\nablabar_{F_\alpha} - i^*\nablabar{F_\alpha}\nablabar_{\partial_t} = \overline{R}(\xi, \psi_0) = 0\).
	For the third summand of \(\underline{\mathcal{F}}'_\phi(0, \xi, \zeta, \sigma)\) we obtain:
	\begin{equation}
		\begin{split}
			\MoveEqLeft
			\left.\frac{\d}{\d{t}}\right|_{t=0} \frac12\Tr_{\dual{g}_S}\left(\id_{\dual{S}}\otimes\gamma\otimes\id_{\varphi_C^*\tangent{N}}\right)i^*\nablabar^{\cotangent{M}\otimes\Phi^*\tangent{N}}{\left(\id_{\dual{\cD}}\otimes P_{\exp_{\Phi_C} tY}^{\nablabar}\right)}^{-1}\DJBar \exp_{\Phi_C} tY \\
			&= \frac12\left.\nabla_{\partial_t}\right|_{t=0} \Tr_{\dual{g}_S}\left(\id_{\dual{S}}\otimes\gamma\otimes\id_{\varphi_C^*\tangent{N}}\right)i^*\nablabar^{\cotangent{M}\otimes\Phi^*\tangent{N}}\DJBar \Phi_t \\
			&= \left.\nabla_{\partial_t}\right|_{t=0} \left(-\DelJBar\varphi_t + \frac18\left(1+\ACI\otimes\targetACI\right)\Tr_{\dual{g}_S}\left(\gamma\otimes\mathfrak{j}_t\targetACI\right)\psi_t\right) \\
			&= -D_\varphi \xi
		\end{split}
	\end{equation}
	Notice that for \(\Phi_0=\Phi_C\) the endomorphisms \(\mathfrak{j}_0\) and \(\tensor[_{34}]{\mathfrak{j}}{_0}\) vanish.
	\begin{equation}
		\begin{split}
			\MoveEqLeftR
			\left.\frac{\d}{\d{t}}\right|_{t=0} -\frac12i^*\DBarLaplace{\left(\id_{\dual{\cD}}\otimes P_{\exp_{\Phi_C} tX}^{\nablabar}\right)}^{-1}\DJBar \exp_{\Phi_C} Y \\
			={}& -\frac12\left.\nabla_{\partial_t}\right|_{t=0}i^*\DBarLaplace\DJBar \Phi_t \\
			={}&	-\frac12\left.\nabla_{\partial_t}\right|_{t=0}\left(1 + \ACI\otimes\targetACI\right)\left(\Dirac\psi_t - \frac16SR^N(\psi_t) \right.\\
				&+ \frac12\left(\id_{\dual{S}}\otimes\left(\targetACI\tensor[_{34}]{\mathfrak{j}}{_t} -\frac14\Tr_{\dual{g}_S}\mathfrak{j}_t\mathfrak{j}_t\right)\right)\psi_t \\
				&+ \left.\frac12\Tr_g\left<\left(\gamma\otimes\id_{\varphi_t^*\tangent{N}}\right)\psi_t, \targetACI\varphi_t^*\nabla\targetACI\right>\differential{\varphi_t} - \frac12\mathfrak{j}_t\targetACI F_t\right) \\
				={}& -\frac12\left(1+\ACI\otimes\targetACI\right)\left(\Dirac\zeta + \frac12\Tr_g\left<\left(\gamma\otimes\id_{\varphi_C^*\tangent{N}}\right)\zeta, \targetACI\varphi_C^*\nabla\targetACI\right>\differential{\varphi_C}\right) \\
				={}& -\Dirac^{1,0}\zeta^{1,0} + \frac12\left(\ACI \gamma^k\otimes\left(\nabla_k\targetACI\right)\zeta^{0,1} - \Tr_g\left<\left(\gamma\otimes\id_{\varphi_C^*\tangent{N}}\right)\zeta^{0,1}, \targetACI\varphi_C^*\nabla\targetACI\right>\differential{\varphi_C}\right)
		\end{split}
	\end{equation}

	It remains to calculate \(\underline{\mathcal{F}}'_\phi(C)(\rho, 0, 0, 0)\).
	This is obtained by taking the time derivative of the component fields of \(\DJBar\Phi_{C,\chi}\) for \(\chi=t\rho\).
	Considering \(\psi_{C, \chi}=0\), the only contribution is \(2\left<\vee Q\rho, \differential{\varphi}\right>\) for the fourth component field.
\end{proof}
\begin{rem}
	Instead of deriving the expressions for \(\underline{\mathcal{F}}'_\phi\) by varying the component expressions of \(\DJBar\Phi\), one can also directly compute component expressions of
	\begin{equation}
		\begin{split}
			\MoveEqLeft
			\left.\frac{\d}{\d{t}}\right|_{t=0}{\left(\id_{\dual{\cD}}\otimes P^{\nablabar}_{\exp_{\Phi} tY}\right)}^{-1}\DJBar\exp_{\Phi} tY
			= \left.\nablabar_{\partial_t} \DJBar\exp_{\Phi} tY\right|_{t=0} \\
			&= \left.\frac12\left(1+\ACI\otimes\targetACI\right)\left(\nabla Y - \frac12 \id\otimes \left(\targetACI\left(\nabla_{Y}\targetACI\right)\right)\differential{\Phi}\right)\right|_{\cD}.
		\end{split}
	\end{equation}
	This expression is the linearization of \(\DJBar\) at a super \(\targetACI\)-holomorphic curve \(\Phi\) in analogy to~\cite[Proposition~3.1.1]{McDS-JHCST}.
\end{rem}

\begin{prop}\label{prop:TransversalityIsAnalysis}
	The map \(\underline{\mathcal{F}}_\phi'(C)\) is surjective if the reduced operators
	\begin{equation}
		D_\phi\colon\VSec{\phi^*\tangent{N}}\to {\VSec{\cotangent{\Red{M}}\otimes\phi^*\tangent{N}}}^{0,1}
	\end{equation}
	and
	\begin{equation}
		\Red{\Dirac}^{1,0}\colon \VSec{\dual{\Red{S}}\otimes_\C\phi^*\tangent{N}} \to {\VSec{\dual{\Red{S}}\otimes\phi^*\tangent{N}}}^{0,1}
	\end{equation}
	are surjective.
\end{prop}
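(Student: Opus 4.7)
The plan is to solve the surjectivity question componentwise, exploiting the block structure of $\underline{\mathcal{F}}'_\phi(C)$ as computed in Proposition~\ref{prop:DifferentialOfF}. Given a target tuple
\begin{equation*}
(T_1,T_2,T_3,T_4)\in\underline{F}_\phi(C),
\end{equation*}
we read off four decoupled (or only loosely coupled) problems:
$\zeta^{0,1}=T_1$, $\tfrac14\sigma=T_2$, $-D_{\varphi_C}\xi=T_3$, and
$-\hat\Dirac_{\varphi_C}\zeta+2\langle\vee Q\rho,\differential\varphi_C\rangle=T_4$.
The first two are solved immediately by setting $\sigma:=4T_2$ and declaring the $(0,1)$-part of $\zeta$ to be $T_1$. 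We shall take $\rho=0$; the gravitino direction in $\mathcal{X}$ is not needed once $\Red{\Dirac}^{1,0}$ is surjective.

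The remaining content is the solvability of $-D_{\varphi_C}\xi=T_3$ and
\begin{equation*}
-\Dirac^{1,0}\zeta^{1,0}=T_4-\tfrac12\Bigl(\ACI\gamma^k\otimes(\nabla_k\targetACI)T_1-\Tr_g\langle(\gamma\otimes\id)T_1,\targetACI\varphi_C^*\nabla\targetACI\rangle\differential{\varphi_C}\Bigr),
\end{equation*}
where $\zeta^{0,1}=T_1$ has been substituted into $\hat\Dirac_{\varphi_C}$. Here the point is that $\varphi_C=\phi\times\id_C$ is a trivial family, so the operators $D_{\varphi_C}$ and $\Dirac^{1,0}$ are just the $\cO_C$-linear extensions of their reductions $D_\phi$ and $\Red{\Dirac}^{1,0}$. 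Writing a section of the appropriate bundle over $\Red{M}\times C$ in a basis $\lambda^{\underline{\kappa}}$ of $\cO_C$ of the correct parity, the equations decouple into a finite collection of equations over $\Red{M}$, one per multi-index $\underline{\kappa}$, each of the form $D_\phi\xi_{\underline{\kappa}}=(\text{known})_{\underline{\kappa}}$ or $\Red{\Dirac}^{1,0}\zeta^{1,0}_{\underline{\kappa}}=(\text{known})_{\underline{\kappa}}$. The hypothesis immediately produces preimages, which one reassembles into the desired $\xi$ and $\zeta^{1,0}$.

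Putting these pieces together, the tuple $(\rho,\xi,\zeta,\sigma):=(0,\xi,\zeta^{0,1}+\zeta^{1,0},4T_2)$ maps to $(T_1,T_2,T_3,T_4)$ under $\underline{\mathcal{F}}'_\phi(C)$, establishing surjectivity. The only subtle point — and the part deserving care — is the passage from surjectivity of $D_\phi$ and $\Red{\Dirac}^{1,0}$ to surjectivity of their $\cO_C$-linear extensions; it is handled by the coefficientwise argument above, which works because the bundles $\phi^*\tangent{N}$ and $\dual{\Red{S}}\otimes\phi^*\tangent{N}$ carry no odd contribution and the family is trivial. No hard analysis enters at this stage: the genuine analytic content (elliptic surjectivity of $D_\phi$ and $\Red{\Dirac}^{1,0}$) is hypothesized, and the implicit function / elliptic regularity work is deferred to the subsequent subsection.
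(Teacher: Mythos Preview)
Your proof is correct and follows essentially the same approach as the paper: write out the four component equations from Proposition~\ref{prop:DifferentialOfF}, solve the algebraic ones for \(\zeta^{0,1}\) and \(\sigma\) directly, then use that \(D_{\varphi_C}=D_\phi\otimes\id_{\cO_C}\) and \(\Dirac^{1,0}=\Red{\Dirac}^{1,0}\otimes\id_{\cO_C}\) to solve the remaining two equations coefficientwise in a \(\lambda^{\underline{\kappa}}\)-expansion. The paper's argument is the same in all essentials; your explicit choice \(\rho=0\) and your remark that the gravitino direction is unnecessary here anticipate the right-inverse constructed later in Proposition~\ref{prop:FkqCHasRightInverse}.
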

\begin{proof}
	The main point here is that for \(\lambda^{\kappa}\) coordinates of \(C\) and
	\begin{align}
		\xi &= \sum_{\underline{\kappa}\text{ even}}\lambda^{\underline{\kappa}}\tensor[_{\underline{\kappa}}]{\xi}{}
		\in\VSec{\phi^*\tangent{N}}\otimes{\left(\cO_C\right)}_0 \\
		\zeta^{1,0} &= \sum_{\underline{\kappa}\text{ odd}}\lambda^{\underline{\kappa}}\tensor[_{\underline{\kappa}}]{\zeta}{^{1,0}}
		\in\VSec{\dual{\Red{S}}\otimes_\C\phi^*\tangent{N}}\otimes{\left(\cO_C\right)}_1
	\end{align}
	we have
	\begin{align}
		D_{\varphi_C}\xi
		&= \sum_{\underline{\kappa}\text{ even}}\lambda^{\underline{\kappa}}D_\phi\tensor[_{\underline{\kappa}}]{\xi}{} &
		\Dirac^{1,0}\zeta^{1,0}
		&= \sum_{\underline{\kappa}\text{ even}}\lambda^{\underline{\kappa}}\Red{\Dirac}^{1,0}\tensor[_{\underline{\kappa}}]{\zeta}{^{1,0}}
	\end{align}
	because \(\varphi_C=\phi\times\id_C\).
	Now let \((t, u, v, w)\in\underline{F}_\phi(C)\).
	The equations
	\begin{equation}\label{eq:ComponentsF'}
		\begin{aligned}
			t ={}& \zeta^{0,1} \\
			u ={}& \frac14\sigma \\
			v ={}& -D_{\varphi_C}\xi \\
			w ={}& -\Dirac^{1,0}\zeta^{1,0} + \frac12\Tr_g\left<\left(\gamma\otimes\id_{\varphi_C^*\tangent{N}}\right)\zeta^{0,1}, \targetACI\varphi_C^*\nabla\targetACI\right>\differential{\varphi_C} \\
				&- \frac12\ACI \gamma^k\otimes\left(\nabla_k\targetACI\right)\zeta^{0,1} + 2\left<\vee Q\rho, \differential\varphi_C\right>
		\end{aligned}
	\end{equation}
	can be solved for \(\xi\), \(\zeta=(\zeta^{1,0},\zeta^{0,1})\) and \(\sigma\) if \(D_\phi\) and \(\Red{\Dirac^{1,0}}\) are surjective.
\end{proof}
It is an important result that for a generic almost complex structure \(\targetACI\) the operator~\(D_\phi\) is surjective for all simple \(\phi\colon \Red{M}\to N\), see~\cite[Theorem~3.1.5(ii)]{McDS-JHCST}.
We expect that similarly \(\Red{\Dirac}^{1,0}\) is surjective for a large open set of maps~\(\phi\) and almost complex structures~\(\targetACI\).
The Dirac operator \(\Red{\Dirac}^{1,0}\) depends on the almost complex structure through the Levi-Civita connection of the target metric \(n(X, Y)=\omega(\targetACI X, Y)\).
In Section~\ref{Sec:BochnersMethod} we will derive conditions for the surjectivity of \(\Red{\Dirac}^{1,0}\).

\subsection{The index of \texorpdfstring{\(D_\phi\)}{Dphi} and \texorpdfstring{\(\Red{\Dirac}^{1,0}\)}{DiracRed1,0}}
In this section we calculate the Fredholm-index of the operators \(D_\phi\) and \(\Red{\Dirac}^{1,0}\) in appropriate Sobolev spaces of sections.
For the analytical details we mainly use the Riemann--Roch Theorem for real linear Cauchy--Riemann operators presented in~\cite[Appendix~C]{McDS-JHCST}.
If the operators \(D_\phi\) and \(\Red{\Dirac}^{1,0}\) are surjective their index corresponds with the dimension of their kernel.
This will allow to calculate the dimension of the moduli space of super \(\targetACI\)-holomorphic curves in the next section.

Recall that a complex linear Cauchy--Riemann operator on a vector bundle \(E\to\Red{M}\) with almost complex structure \(J^E\) is a complex linear operator \(\mathfrak{D}\colon \VSec{E}\to \VSec{{\cotangent{\Red{M}}\otimes E}}^{0,1}\) satisfying the Leibniz rule \(\mathfrak{D}fY = \left(\bar\partial f\right)Y + f \mathfrak{D}Y\) for complex functions \(f\) and \(Y\in\VSec{E}\).
A smooth real linear Cauchy--Riemann operator is an operator of the form \(\mathfrak{D}=\mathfrak{D}_0 + \alpha\), where \(\mathfrak{D}_0\) is a smooth complex linear Cauchy--Riemann operator and \(\alpha\in{\VSec{\cotangent{\Red{M}}\otimes\End(E)}}^{0,1}\).
Operators of the form \(\frac12\left(1+\ACI\otimes J^E\right)\nabla^E\) are always real linear Cauchy--Riemann operators, in particular also the operator
\begin{equation}
	D_\phi\xi = \frac12\left(1+\ACI\otimes\targetACI\right)\left(\nabla\xi - \frac12\id_{\cotangent{\Smooth{M}}}\otimes\left(\targetACI\left(\nabla_\xi \targetACI\right)\right)\differential\phi\right).
\end{equation}
The Riemann--Roch theorem for Cauchy--Riemann operator~\cite[Theorem~C.1.10]{McDS-JHCST} states in particular that for every smooth real linear Cauchy--Riemann operator \(\mathfrak{D}\) the completion
\begin{equation}
	\mathfrak{D}\colon W^{k,q}(E)\to W^{k-1,q}{(\cotangent{\Red{M}}\otimes E)}^{0,1}
\end{equation}
is Fredholm for all integers \(k>0\), \(q>1\) and its real index is given by
\begin{equation}
	\ind \mathfrak{D}= n(2-2p)+ 2\left<c_1(E), [\Red{M}]\right>.
\end{equation}
Here \(p\) denotes the genus of \(\Red{M}\) and \(n\) is the complex rank of \(E\).
Furthermore the kernel of \(\mathfrak{D}\) is independent of \(k\) and \(q\).

This result is applied in~\cite{McDS-JHCST} mainly to
\begin{equation}
	D_\phi\colon W^{k,q}(\Red{\phi}^*\tangent{N})\to W^{k-1, q}{(\cotangent{\Red{M}}\otimes\phi^*\tangent{N})}^{0,1}
\end{equation}
whose index is given by
\begin{equation}
	\ind D_\phi = n(2-2p) + \left<c_1(\tangent{N}), [\im\phi]\right>.
\end{equation}
In order to apply the Riemann--Roch Theorem for real linear Cauchy--Riemann operators also to \(\Red{\Dirac}^{1,0}\), we need the following lemmata:
\begin{lemma}
	For
	\begin{equation}
		\begin{split}
			\delta_\gamma\colon\cotangent{\Red{M}}\otimes \dual{\Red{S}}\otimes_\C\phi^*\tangent{N} &\to {\left(\dual{\Red{S}}\otimes\phi^*\tangent{N}\right)}^{0,1} \\
			\alpha\otimes s\otimes_\C Y &\mapsto \gamma(\alpha)s\otimes Y
		\end{split}
	\end{equation}
	the short exact sequence of real vector bundles
	\begin{diag}\label{seq:twistedSpinors}
		\matrix[mat](m)
		{
			0 & \cotangent{\Red{M}}\otimes_\C\dual{\Red{S}}\otimes_\C\phi^*\tangent{N} & \cotangent{\Red{M}}\otimes \dual{\Red{S}}\otimes_\C\phi^*\tangent{N} &\\
				& \vphantom{0} & {\left(\dual{\Red{S}}\otimes\phi^*\tangent{N}\right)}^{0,1} & 0 \\
		} ;
		\path[pf]
		{
			(m-1-1) edge (m-1-2)
			(m-1-2) edge (m-1-3)
			(m-2-2) edge node[auto]{\(\delta_\gamma\)} (m-2-3)
			(m-2-3) edge (m-2-4)
		} ;
	\end{diag}
	is split by
	\begin{equation}
		{\ker\delta_\gamma}^\perp
		= {\left(\cotangent{\Red{M}}\otimes\dual{\Red{S}}\otimes_\C\phi^*\tangent{N}\right)}^{0,1}.
	\end{equation}
\end{lemma}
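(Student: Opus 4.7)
The plan is to recognize this sequence as the $\phi^*\tangent{N}$-twisted version of the decomposition $\cotangent{\Red{M}}\otimes\dual{\Red{S}}=\cotangent{\Red{M}}\otimes_\C\dual{\Red{S}}\oplus\dual{\Red{S}}$ under Clifford multiplication recalled at the end of Section~\ref{SSec:DJBarSRS}.

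The first point is to verify the interplay of the complex structures. In real dimension two, Clifford multiplication $\gamma$ intertwines the complex structure $\ACI$ on $\cotangent{\Red{M}}$ with $\ACI$ on $\dual{\Red{S}}$, i.e.\ $\gamma(\ACI\alpha)=\ACI\gamma(\alpha)$, while $\gamma(\alpha)$ anticommutes with $\ACI$ on spinors, $\gamma(\alpha)\ACI=-\ACI\gamma(\alpha)$. Consequently the assignment $\alpha\otimes s\mapsto\gamma(\alpha)s$ is $\C$-antilinear in the $\dual{\Red{S}}$-factor. Using this $\C$-antilinearity I would then verify that the formula $\alpha\otimes s\otimes_\C Y\mapsto\gamma(\alpha)s\otimes Y$ is well-defined modulo the tensor relation $\ACI s\otimes_\C Y=s\otimes_\C\targetACI Y$ and that its image lies in the $+1$-eigenspace $(\dual{\Red{S}}\otimes\phi^*\tangent{N})^{0,1}$ of $\ACI\otimes\targetACI$; equivalently, identifying $(\dual{\Red{S}}\otimes\phi^*\tangent{N})^{0,1}$ with $\dual{\Red{S}}\otimes_\C\overline{\phi^*\tangent{N}}$, the map is the evident extension by the identity on $\phi^*\tangent{N}$ of the untwisted Clifford map.

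Exactness then follows readily: the kernel of the untwisted $\delta_\gamma$ tensored with $\phi^*\tangent{N}$ over~$\C$ is precisely $\cotangent{\Red{M}}\otimes_\C\dual{\Red{S}}\otimes_\C\phi^*\tangent{N}$, and surjectivity is immediate from surjectivity in the untwisted case (alternatively by matching real ranks $4n=2n+2n$). For the splitting I would invoke the general fact that for two complex vector bundles $E$ and $F$ the real tensor product decomposes orthogonally into eigenspaces of $\ACI_E\otimes\ACI_F$,
\begin{equation}
	E\otimes_\R F=(E\otimes_\C F)\oplus(E\otimes F)^{0,1},
\end{equation}
where $E\otimes_\C F$ is the $-1$-eigenspace (complex structures agree) and $(E\otimes F)^{0,1}$ the $+1$-eigenspace (complex structures are opposite). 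Applied to $E=\cotangent{\Red{M}}$ and $F=\dual{\Red{S}}\otimes_\C\phi^*\tangent{N}$, the $-1$-eigenspace $\cotangent{\Red{M}}\otimes_\C F$ coincides with $\ker\delta_\gamma$ while the $+1$-eigenspace is exactly $(\cotangent{\Red{M}}\otimes\dual{\Red{S}}\otimes_\C\phi^*\tangent{N})^{0,1}$. Orthogonality of distinct eigenspaces of a self-adjoint operator yields the splitting ${\ker\delta_\gamma}^\perp=(\cotangent{\Red{M}}\otimes\dual{\Red{S}}\otimes_\C\phi^*\tangent{N})^{0,1}$.

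The only real subtlety is the bookkeeping of the three complex structures and the two different $\otimes_\C$ conventions (one on the source bundle, one implicit in the identification of $(0,1)$-parts with conjugate-complex tensor products); once the anticommutation $\gamma\ACI=-\ACI\gamma$ on spinors is established, every remaining identification is formal.
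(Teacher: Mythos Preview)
Your argument is correct and essentially complete; the only quibble is a sign in the intertwining relation (with the paper's conventions one has \(\gamma(\ACI\alpha)=\gamma(\alpha)\ACI=-\ACI\gamma(\alpha)\), not \(\gamma(\ACI\alpha)=\ACI\gamma(\alpha)\)), but this does not affect the logic since only the anticommutation \(\gamma(\alpha)\ACI=-\ACI\gamma(\alpha)\) is actually used.

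The route differs from the paper's. The paper verifies that \(\delta_\gamma\) is complex antilinear via the identity
\(\delta_\gamma(\alpha\otimes(s\otimes Y-\ACI s\otimes\targetACI Y))=(1+\ACI\otimes\targetACI)(\gamma(\alpha)s\otimes Y)\)
and then writes down an explicit right inverse \(\tfrac12\gamma\colon s^\alpha\otimes\tensor[_\alpha]{\psi}{}\mapsto-\tfrac12 f^k\otimes s^\beta\otimes\tensor{\gamma}{_k_\beta^\alpha}\tensor[_\alpha]{\psi}{}\). You instead invoke the abstract orthogonal eigenspace decomposition \(E\otimes_\R F=(E\otimes_\C F)\oplus(E\otimes F)^{0,1}\) under \(\ACI_E\otimes\ACI_F\) and identify the two summands with \(\ker\delta_\gamma\) and its complement. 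Your approach makes the identity \({\ker\delta_\gamma}^\perp=(\cotangent{\Red{M}}\otimes\dual{\Red{S}}\otimes_\C\phi^*\tangent{N})^{0,1}\) immediate from self-adjointness of \(\ACI_E\otimes\ACI_F\), whereas the paper leaves that identification implicit in the form of the splitting map. Conversely, the paper's explicit \(\tfrac12\gamma\) is convenient for the next lemma, where \(\Red{\Dirac}^{1,0}\) is rewritten as \(\delta_\gamma\circ\DelJBar^{\nablabar}\) and one wants the concrete inverse on the complement. Both arguments are short; yours is the cleaner structural explanation, the paper's the more computational one.
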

\begin{proof}
	Notice that \(\delta_\gamma\) is indeed complex antilinear and hence
	\begin{equation}
		\delta_\gamma(\alpha\otimes\left(s \otimes Y - \ACI s\otimes \targetACI Y\right))
		= \left(1+\ACI\otimes\targetACI\right)\left(\gamma(\alpha)s\otimes Y\right).
	\end{equation}
	The sequence~\eqref{seq:twistedSpinors} is split by the complex anti-linear map
	\begin{equation}
		\begin{split}
			\frac12\gamma\colon {\left(\dual{\Red{S}}\otimes\phi^*\tangent{N}\right)}^{0,1} &\to \cotangent{\Red{M}}\otimes\dual{\Red{S}}\otimes_\C\phi^*\tangent{N} \\
			s^\alpha\otimes\tensor[_\alpha]{\psi}{} &\mapsto -\frac12f^k\otimes s^\beta\otimes \tensor{\gamma}{_k_\beta^\alpha}\tensor[_\alpha]{\psi}{}
			\qedhere
		\end{split}
	\end{equation}
\end{proof}

\begin{lemma}
	Denote by \(J^{\dual{\Red{S}}\otimes_\C\phi^*\tangent{N}}=\ACI\otimes_\C\id_{\phi^*\tangent{N}}=\id_{\dual{\Red{S}}}\otimes_\C\targetACI\) the almost complex structure on \(\dual{\Red{S}}\otimes_\C\phi^*\tangent{N}\) and by \(\nablabar^{\dual{\Red{S}}\otimes_\C\phi^*\tangent{N}}=\nabla^{\dual{\Red{S}}}\otimes_\C\nablabar\) the induced almost complex connection.
	Then for every \(\zeta^{1,0}\in\VSec{\dual{\Red{S}}\otimes\phi^*\tangent{N}}\)
	\begin{equation}
		\Red{\Dirac}^{1,0} \zeta^{1,0}
		= \delta_\gamma \circ \DelJBar^{\nablabar^{\dual{\Red{S}}\otimes_\C\phi^*\tangent{N}}} \zeta^{1,0}
		= \delta_\gamma\left(\frac12\left(1+\ACI\otimes J^{\dual{\Red{S}}\otimes\phi^*\tangent{N}}\right)\nablabar^{\dual{\Red{S}}\otimes\varphi^*\tangent{N}}\zeta^{1,0}\right).
	\end{equation}
\end{lemma}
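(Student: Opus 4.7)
The plan is to reduce everything to the factorization $\Dirac = \widehat{\delta}_\gamma \circ \nabla$, where $\widehat{\delta}_\gamma$ is the obvious extension of $\delta_\gamma$ from the complex tensor bundle to the full real tensor bundle $\cotangent{\Red{M}}\otimes\dual{\Red{S}}\otimes\phi^*\tangent{N}$, and to absorb the discrepancy between $\nabla$ and $\nablabar$ into the outer $(0,1)$-projector. First I would rewrite
\begin{equation*}
	\Red{\Dirac}^{1,0}\zeta^{1,0} = \tfrac12\bigl(1+\ACI\otimes\targetACI\bigr)\widehat{\delta}_\gamma(\nabla\zeta^{1,0})
\end{equation*}
and split $\nabla\zeta^{1,0} = \nablabar\zeta^{1,0} + \tfrac12\, f^k\otimes\bigl(\id_{\dual{\Red{S}}}\otimes\targetACI(\nabla_{f_k}\targetACI)\bigr)\zeta^{1,0}$ using Equation~\eqref{eq:DefinitionNablaBar}.

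For the $\nablabar$-summand, the key observation is that $\nablabar$ preserves $\targetACI$, so $\nablabar\zeta^{1,0}$ actually lies in $\cotangent{\Red{M}}\otimes\dual{\Red{S}}\otimes_\C\phi^*\tangent{N}$, the domain of the honest $\delta_\gamma$ from the preceding lemma. That lemma then gives $\delta_\gamma(\nablabar\zeta^{1,0}) \in \VSec{\dual{\Red{S}}\otimes\phi^*\tangent{N}}^{0,1}$, so the outer projector $\tfrac12(1+\ACI\otimes\targetACI)$ acts as the identity; moreover $\delta_\gamma$ annihilates its kernel $\cotangent{\Red{M}}\otimes_\C\dual{\Red{S}}\otimes_\C\phi^*\tangent{N}$, which is precisely the $(1,0)$-part in the $\cotangent{\Red{M}}\otimes\dual{\Red{S}}$ factor, so $\delta_\gamma(\nablabar\zeta^{1,0}) = \delta_\gamma\bigl(\DelJBar^{\nablabar}\zeta^{1,0}\bigr)$, which is already the claimed right-hand side. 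For the correction summand the task reduces to showing that it lies in the $-1$ eigenspace of $\ACI\otimes\targetACI$ and is therefore killed by the outer projector. Three anticommutations will enter: $\gamma(f^k)\ACI = -\ACI\gamma(f^k)$ on spinors (dimension two), $\{\targetACI,\nabla\targetACI\}=0$ on $\tangent{N}$ (differentiate $\targetACI^2=-\id$), and the complex-linearity identity $(\ACI\otimes\id)\zeta^{1,0} = (\id\otimes\targetACI)\zeta^{1,0}$ of the input. Commuting $\ACI\otimes\targetACI$ through the three tensor factors in turn produces an overall sign of $-1$, confirming the desired eigenvalue.

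The main obstacle is this final sign-tracking step: while each of the three anticommutation relations is elementary, their sequential composition is easily mis-signed, and any stray sign would produce a spurious $\nabla\targetACI$-contribution to $\Red{\Dirac}^{1,0}$ in conflict with the claimed identity. I would carry the computation out explicitly in a local $\UGL(1)$-frame $f_a$ on $\Red{M}$ together with a Hermitian frame on $\phi^*\tangent{N}$ to keep the bookkeeping transparent, treating $\zeta^{1,0}$ as a formal combination $s^\alpha\otimes Y_\alpha$ subject to the $\C$-linearity constraint.
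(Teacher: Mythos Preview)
Your proposal is correct. The paper proves the lemma by a direct frame computation going the other direction: it expands \(\DelJBar^{\nablabar}\zeta^{1,0}\) in an orthonormal frame \(f_k\), \(s^\kappa\), applies \(\delta_\gamma\), and recognizes the resulting expression as \(\Red{\Dirac}^{1,0}\zeta^{1,0}\) using the explicit coordinate form of \(\Dirac\) together with the \(\nabla\targetACI\)-correction built into \(\nablabar\).

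Your route is more structural. Rather than matching frame expressions, you split \(\nabla=\nablabar+\tfrac12\targetACI(\nabla\targetACI)\) and treat the two pieces separately: for the \(\nablabar\)-part you invoke the exact sequence of the preceding lemma to see that \(\delta_\gamma\) already lands in the \((0,1)\)-space and annihilates the \((1,0)\)-projection, so both the inner and outer projectors come for free; for the correction you show it lies in the \(-1\)-eigenspace of \(\ACI\otimes\targetACI\) via the three anticommutations you list. This makes transparent \emph{why} the \(\nabla\targetACI\)-term disappears (it is killed by the outer projector) rather than having it cancel inside a frame expression, and it uses the preceding lemma as a genuine ingredient rather than just motivation. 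The paper's computation is shorter on the page but leaves the final identification \(=\Red{\Dirac}^{1,0}\zeta^{1,0}\) implicit; your argument trades a couple of lines of frame algebra for a cleaner explanation of the mechanism.
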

\begin{proof}
By straightforward calculation
\begin{equation}
	\begin{split}
		\MoveEqLeft
		\DelJBar^{\nablabar^{\dual{\Red{S}}\otimes\phi^*\tangent{N}}} \zeta^{1,0}
		= \frac12\left(1+\ACI\otimes J^{\dual{\Red{S}}\otimes\phi^*\tangent{N}}\right)\nablabar^{\dual{\Red{S}}\otimes\varphi^*\tangent{N}}\zeta^{1,0} \\
		={}& \frac12f^k\otimes s^\kappa\left(\delta_k^l\delta_\kappa^\beta + \tensor{\ACI}{_k^l}\tensor{\ACI}{_\kappa^\lambda}\right)\left(-\frac12\omega_l^{LC}\tensor{\ACI}{_\lambda^\tau}\tensor[_\tau]{\zeta}{^{1,0}} + \nabla^{\phi^*\tangent{N}}_{f_l}\tensor[_\lambda]{\zeta}{^{1,0}} - \frac12 \left(\nabla_{f_l}\targetACI\right)\targetACI\tensor[_\lambda]{\zeta}{^{1,0}}\right)
	\end{split}
\end{equation}
The image \(\DelJBar^{\nablabar^{\dual{\Red{S}}\otimes\phi^*\tangent{N}}}\zeta^{1,0}\) is a section of \({\left(\cotangent{\Red{M}}\otimes\dual{\Red{S}}\otimes_\C\phi^*\tangent{N}\right)}^{0,1}\) and \(\delta_\gamma\) maps it to
\begin{equation}
	\begin{split}
		\delta_\gamma\circ \DelJBar^{\nablabar^{\dual{\Red{S}}\otimes\phi^*\tangent{N}}} \zeta^{1,0}
		&= -s^\kappa\tensor{\gamma}{^l_\kappa^\lambda}\left(-\frac12\omega_l^{LC}\tensor{\ACI}{_\lambda^\tau}\tensor[_\tau]{\zeta}{^{1,0}} + \nabla_{f_l}\tensor[_\lambda]{\zeta}{^{1,0}} - \frac12\left(\nabla_{f_l}\targetACI\right)\targetACI\tensor[_\lambda]{\zeta}{^{1,0}}\right) \\
		&= \Red{\Dirac}^{1,0}\zeta^{1,0}.
		\qedhere
	\end{split}
\end{equation}
\end{proof}

Notice that \(\delta_\gamma\) is an isomorphism of real vector bundles.
Consequently, the reduced operator
\begin{equation}
\Red{\Dirac}^{1,0}\colon W^{k,q}(\dual{\Red{S}}\otimes_\C\phi^*\tangent{N}) \to W^{k-1, q}{(\dual{\Red{S}}\otimes\phi^*\tangent{N})}^{0,1}
\end{equation}
is a Fredholm operator with index
\begin{equation}
	\begin{split}
		\ind \Red{\Dirac}^{1,0}
		&= n(2-2p) + 2\left<c_1(\dual{\Red{S}}\otimes\phi^*\tangent{N}), [\Red{M}]\right> \\
		&= 2\left<c_1(\tangent{N}), [\im\phi]\right>.
	\end{split}
\end{equation}

\subsection{Charts for \texorpdfstring{\(\underline{\mathcal{M}(\mathcal{X}, A)}}{M(X,A)}\)}
In this section we show that if \(\underline{\mathcal{S}}\) is transversal to the zero section at the \(\targetACI\)-holomorphic curve \(\phi\colon \Red{M}\to N\), there is a local chart for \(\underline{\mathcal{M}(\mathcal{X}, A)}\) around \(\phi\) of finite dimension
\begin{equation}
	\begin{split}
		\MoveEqLeft
		\dim \ker D_\phi|\dim\ker\Red{\Dirac}^{1,0} + \dim \mathcal{X}
		=\ind D_\phi|\ind\Red{\Dirac}^{1,0} + \dim\mathcal{X}\\
		&= 2n(1-p) + 2\left<c_1(\tangent{N}), A\right>|2\left<c_1(\tangent{N}), A\right> + \dim\mathcal{X}.
	\end{split}
\end{equation}
The union of all such charts gives the moduli space \(\underline{\mathcal{M}(\mathcal{X}, A)}\) of super \(\targetACI\)-holomorphic curves the structure of a supermanifold fibered over \(\underline{\mathcal{X}}\).
This implies the main Theorem~\ref{thm:MainThm}.

To obtain a local chart \(\underline{\mathcal{F}}_\phi^{-1}(0)\) for \(\underline{\mathcal{S}}^{-1}(0)\) we apply the implicit function theorem to \(\underline{\mathcal{F}}_\phi(C)\).
In order to apply the implicit function theorem and use Fredholm theory, we have to complete the spaces of smooth sections \(\underline{U}_\phi(C)\) and \(\underline{F}_\phi(C)\) to appropriate Sobolev spaces.
Then we have to show that \(0\) is a regular value of all \(\underline{\mathcal{F}}_\phi(C)\).
As solutions of \(\DJBar\Phi=0\) are always smooth, the charts for the moduli space of super \(\targetACI\)-holomorphic curves are actually smooth.

Recall that for any vector bundle \(E\to \Red{M}\), any positive integer \(k\) and any real number \(q>1\) the Sobolev space \(W^{k,q}(E)\) is the space of sections of \(E\) such that in any coordinate chart weak derivatives of order up to \(k\) exist and are in \(L^q\).
Let \(\norm{\cdot}_{k,q}\) be a norm on \(W^{k,q}(E)\) turning it into a Banach space.
Such a norm can be obtained by adding up the \(W^{k,p}\)-norms in coordinate charts using a partition of unity.
For the remainder of this section we choose \(k>1\), \(q>1\) such that \(k-\frac2q>1\).
This choice of \(k\), \(q\) guarantees that all functions are at least \(C^1\) by Sobolev embedding theorems and all products, such as norm scalar products, are of the same regularity.
\begin{defn}
	For every map \(\phi\colon \Red{M}\to N\) and every \(C\in\cat{SPoint}\) we define the Banach spaces
	\begin{align}
		\begin{split}
			\MoveEqLeftR
			\underline{W}^{k,q}_\phi(C)
			= W^{k,q}(\phi^*\tangent{N}){\otimes\left(\cO_C\right)}_0 \oplus W^{k,q}(\Red{\dual{S}}\otimes\phi^*\tangent{N})\otimes{\left(\cO_C\right)}_1 \\
				&\oplus W^{k,q}(\phi^*\tangent{N}){\otimes\left(\cO_C\right)}_0
		\end{split} \\
		\begin{split}
			\MoveEqLeftR
			\underline{F}^{k-1,q}_\phi(C)
			= W^{k,q}{(\dual{\Red{S}}\otimes\phi^*\tangent{N})}^{0,1}\otimes{\left(\cO_C\right)}_1 \oplus W^{k,q}(\phi^*\tangent{N})\otimes{\left(\cO_C\right)}_0 \\
				& \oplus W^{k-1,q}{(\cotangent{\Red{M}}\otimes\phi^*\tangent{N})}^{0,1}\otimes{\left(\cO_C\right)}_0\oplus W^{k-1,q}{(\dual{\Red{S}}\otimes\phi^*\tangent{N})}^{0,1}\otimes{\left(\cO_C\right)}_1.
		\end{split}
	\end{align}
	Here we define the Banach space structure on the tensor product as the finite direct sum of the Banach spaces \(W^{k,q}\) indexed by expressions \(\lambda^{\underline{\kappa}}\) for \(\lambda^{\kappa}\) coordinates on \(C\).
	That is, for \((\xi, \zeta, \sigma)\in\underline{W}^{k,q}_\phi(C)\) we define the \(W^{k,q}\)-norm as
	\begin{equation}
		\norm{(\xi, \zeta, \sigma)}_{k,q} = \sum_{\underline{\kappa}\text{ even}}\left(\norm{\tensor[_{\underline{\kappa}}]{\xi}{}}_{k,q} + \norm{\tensor[_{\underline{\kappa}}]{\sigma}{}}_{k,q}\right) + \sum_{\underline{\kappa}\text{ odd}}\norm{\tensor[_{\underline{\kappa}}]{\zeta}{}}_{k,q}.
	\end{equation}
	Here we have chosen norms \(\norm{\cdot}_{k,q}\) on \(W^{k,q}(\phi^*\tangent{N})\) and \(W^{k,q}(\dual{\Red{S}}\otimes\phi^*\tangent{N})\).
	The case \(\underline{F}_\phi^{k-1,q}\) is treated analogously.

	Furthermore, we define
	\begin{equation}
		\underline{U}_\phi^{k,q}(C)
		= \Set{(\xi, \zeta, \sigma)\in \underline{W}^{k,q}_\phi(C)\given \tensor[_0]{\xi}{}\in U},
	\end{equation}
	where \(U\subset \tangent{N}\) is the open neighbourhood of zero where the exponential map is bijective to \(V\subset N\times N\).
\end{defn}

Notice that the compactness of \(\Red{M}\) implies that \(\underline{W}_\phi(C)\) is dense in \(\underline{W}_\phi^{k,q}(C)\) with respect to \(\norm{\cdot}_{k,q}\), and \(\underline{F}_\phi(C)\) is dense in \(\underline{F}_\phi^{k-1,q}\) with respect to \(\norm{\cdot}_{k-1,q}\).
Hence:
\begin{lemma}
	For every \(\phi\colon \Red{M}\to N\) and every \(C\in\cat{SPoint}\) the smooth map \(\underline{\mathcal{F}}_\phi(C)\colon \underline{\mathcal{X}}(C)\times\underline{U}_\phi(C)\to \underline{F}_\phi(C)\) extends continuously to a smooth map
	\begin{equation}
		\underline{\mathcal{F}}_\phi^{k,q}(C)\colon \underline{\mathcal{X}}(C)\times\underline{U}_\Phi^{k,q}(C) \to \underline{F}_\Phi^{k-1,q}(C).
	\end{equation}
	Consequently, \(\underline{\mathcal{F}}_\phi^{k,q}\colon \underline{\mathcal{X}}\times\underline{U}_\phi^{k,q}\to\underline{F}_\phi^{k-1,q}\) is a smooth map between Banach super domains.
	The differential
	\begin{equation}
		{\underline{\mathcal{F}}'}_\phi^{k,q}(C)\colon \underline{\mathcal{X}}(C)\times \underline{W}_\phi^{k,q}(C) \to \underline{F}_\phi^{k-1,q}(C)
	\end{equation}
	of \(\underline{\mathcal{F}}_\phi^{k,q}(C)\) at zero is given by the formulas in Proposition~\ref{prop:DifferentialOfF}.
\end{lemma}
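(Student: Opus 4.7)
The plan is to read off regularity directly from the explicit component formulas for $\underline{\mathcal{F}}_\phi(C)$ derived in the proof of Proposition~\ref{prop:SmoothSection}. Recall that each component of $\underline{\mathcal{F}}_\phi(C)(\chi, \xi, \zeta, \sigma)$ is a combination of: (i) fibrewise smooth maps on $\phi^*\tangent{N}$ obtained from the exponential map $\exp_{\Phi_{C,\chi}}$, the parallel transport $P^{\nablabar}_{\exp tX}$ and the higher derivatives $\nablabar^k P^{-1}$, evaluated along $X \in \VSec{\Phi_{C,\chi}^*\tangent{N}}$ which is determined by $(\xi, \zeta, \sigma)$; (ii) the component fields $\mathfrak{j}$, $\tensor[_{34}]{\mathfrak{j}}{}$ of $\Phi^*\targetACI$; (iii) at most one spatial differential operator of order one along $\Red{M}$, such as $\nabla_{f_k}$, $\Dirac$ or $\DLaplace$, applied to $(\xi, \zeta, \sigma)$. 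There are no terms of higher order in spatial derivatives because the compositions with $\exp$ and $P^{\nablabar}$ are pointwise operations.

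The first step is then the $\omega$-lemma for sections: because $k - 2/q > 1$, the Sobolev embedding $W^{k,q}(\Red{M}) \hookrightarrow C^1(\Red{M})$ holds, so for any smooth fibrewise map $F\colon E \to E'$ between smooth vector bundles over $\Red{M}$ with base $\phi$, the induced map $\VSec{E} \ni Y \mapsto F \circ Y \in \VSec{E'}$ extends to a smooth map $W^{k,q}(E) \to W^{k,q}(E')$ in a neighbourhood of the zero section; its Fréchet derivatives are computed by the usual chain rule. Applying this to the finitely many smooth fibrewise operations appearing in the formulas of Proposition~\ref{prop:SmoothSection}, one obtains smooth maps $\underline{U}_\phi^{k,q}(C) \to W^{k,q}(\Red{M}, \cdot)$ in each coordinate. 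Combining with the continuous linear operators $\nabla_{f_k}, \Dirac, \DLaplace\colon W^{k,q} \to W^{k-1,q}$ and keeping track of the parity in the coordinates $\lambda^\kappa$ of $C$, we obtain a smooth map $\underline{\mathcal{F}}_\phi^{k,q}(C)\colon \underline{\mathcal{X}}(C)\times \underline{U}_\phi^{k,q}(C) \to \underline{F}_\phi^{k-1,q}(C)$. The same argument at the level of $\underline{\mathcal{X}} \times \underline{U}_\phi^{k,q}$ shows this assembles into a supersmooth natural transformation between Banach superdomains in the sense of Definition~\ref{defn:SManFunctor}; linearity in the $C$-direction is inherited from Proposition~\ref{prop:SmoothSection}.

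Second, the density of $\VSec{\phi^*\tangent{N}}$ in $W^{k,q}(\phi^*\tangent{N})$ and of the corresponding spinor sections, combined with the continuity just established, shows that $\underline{\mathcal{F}}_\phi^{k,q}(C)$ is the unique continuous extension of $\underline{\mathcal{F}}_\phi(C)$. Finally, the differential at zero is computed by the chain rule applied to the extended formulas; since Fréchet differentiation commutes with the continuous extension of smooth maps and agrees with the formal variation on smooth sections, the result of Proposition~\ref{prop:DifferentialOfF} transfers verbatim. The right-hand sides there involve only $D_{\varphi_C}$, $\hat{\Dirac}_{\varphi_C}$, $\left<\vee Q\rho, \differential\varphi_C\right>$ and the identity map on $\sigma$, each of which is a bounded linear map $\underline{W}_\phi^{k,q}(C) \to \underline{F}_\phi^{k-1,q}(C)$.

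The only genuine obstacle is a bookkeeping one: one must check that the $\omega$-lemma can be applied uniformly in the Grassmann-algebra coordinates $\lambda^\kappa$ and that the finite sum decomposition $\xi = \sum_{\underline{\kappa} \text{ even}}\lambda^{\underline{\kappa}}\tensor[_{\underline{\kappa}}]{\xi}{}$ is compatible with the Sobolev norm chosen on $\underline{W}_\phi^{k,q}(C)$. This follows from the fact that the norm on the tensor product was defined as the direct sum of the $W^{k,q}$-norms on each coefficient $\tensor[_{\underline{\kappa}}]{\xi}{}$, so that the finitely many coefficient-wise compositions remain smooth and the assembly back into $\underline{W}_\phi^{k,q}(C)$ is continuous.
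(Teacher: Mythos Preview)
Your proposal is correct and follows essentially the same approach as the paper: both arguments read off regularity directly from the component formulas established in Proposition~\ref{prop:SmoothSection}, observing that each component is a composition of smooth fibrewise maps (coming from \(\exp\) and parallel transport) with at most one first-order spatial differential operator. Your version is in fact more detailed than the paper's, since you make explicit the role of the \(\omega\)-lemma, the density argument for the continuous extension, and the bookkeeping with the Grassmann coordinates \(\lambda^{\underline{\kappa}}\); the paper simply asserts that ``the formulas show the regularity'' and checks which components land in \(W^{k,q}\) versus \(W^{k-1,q}\).
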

\begin{proof}
	We have argued in the proof of Proposition~\ref{prop:SmoothSection} that \(\underline{\mathcal{F}}_\phi(\chi, \xi, \zeta, \sigma)\) is given by a combination of differentials of the smooth maps \(\exp_{\Phi_C}\) and \(P_{\exp_{\Phi_C}}^{\nablabar}\) and at most one differential operator of order one acting on \((\xi, \zeta, \sigma)\).
	If \((\xi, \zeta, \sigma)\) are not smooth but rather an element of \(\underline{U}_\phi^{k,q}(C)\), the formulas show the regularity of the components of \(\underline{\mathcal{F}}_\phi(C)X\).
	Using the expressions from Proposition~\ref{prop:ComponentsOfDJBar}, we see that the first two component fields of \(\underline{\mathcal{F}}_\phi(C)X\) involve only algebraic expressions, smooth maps, and the component fields of \(X\).
	Consequently, they lie in \(W^{k,q}{(\Red{\dual{S}}\otimes\phi^*\tangent{N})}^{0,1}\otimes{\left(\cO_C\right)}_1\) and \(W^{k,q}(\phi^*\tangent{N})\otimes{\left(\cO_C\right)}_0\).
	The other two component fields of \(\underline{\mathcal{F}}_\phi(C)\) involve in addition first order differential operators, hence they are elements of \(W^{k-1,q}{(\cotangent{\Red{M}}\otimes\phi^*\tangent{N})}^{0,1}\otimes{\left(\cO_C\right)}_0\) and \(W^{k-1,q}{(\Red{\dual{S}}\otimes\phi^*\tangent{N})}^{0,1}\otimes{\left(\cO_C\right)}_1\) respectively.
	This shows the claim.
\end{proof}

For the remainder of this Chapter we assume that \(\phi\colon \Red{M}\to N\) is a \(\targetACI\)-holomorphic curve with \([\im \phi]=A\in H_2(N)\) such that both
\begin{align}
	D_\phi\colon W^{k,q}(\phi^*\tangent{N})&\to W^{k-1,q}{(\cotangent{\Red{M}}\otimes\phi^*\tangent{N})}^{0,1} \\
	\Red{\Dirac}^{1,0}\colon W^{k,q}(\dual{\Red{S}}\otimes_\C\phi^*\tangent{N})&\to W^{k-1,q}{(\dual{\Red{S}}\otimes\phi^*\tangent{N})}^{0,1}
\end{align}
are surjective.
Any surjective Fredholm operator has a (non-canonical) right-inverse, that is, there are linear operators
\begin{align}
	R_1 \colon W^{k-1, q}{(\cotangent{\Red{M}}\otimes\phi^*\tangent{N})}^{0,1} &\to W^{k,q}(\phi^*\tangent{N}) \\
	R_2 \colon  W^{k-1, q}{(\dual{\Red{S}}\otimes\phi^*\tangent{N})}^{0,1} &\to W^{k,q}(\dual{\Red{S}}\otimes_\C\phi^*\tangent{N})
\end{align}
such that
\begin{align}
	D_\phi\circ R_1 &= \id, &
	\Red{\Dirac}^{1,0}\circ R_2 &= \id.
\end{align}
The choice of such right-inverses gives splittings:
\begin{align}
	W^{k,q}(\phi^*\tangent{N}) &= \ker D_\phi\oplus W^{k-1, q}{(\cotangent{\Red{M}}\otimes\phi^*\tangent{N})}^{0,1} \\
	W^{k,q}(\dual{\Red{S}}\otimes_\C\phi^*\tangent{N}) &= \ker\Red{\Dirac}^{1,0}\oplus W^{k-1, q}{(\dual{\Red{S}}\otimes\phi^*\tangent{N})}^{0,1}
\end{align}
Notice, the Proposition~\ref{prop:TransversalityIsAnalysis} extends to all \(\underline{\mathcal{F}}_\phi^{k,q}(C)\) and a choice of right-inverses for \(D_\phi\) and \(\Red{\Dirac}^{1,0}\) yields a right-inverse for \({\underline{\mathcal{F}}'}_\phi^{k,q}(C)\) as follows:
\begin{prop}\label{prop:FkqCHasRightInverse}
	There exists a right-inverse to \({\underline{\mathcal{F}}'}_\phi^{k,q}(C)\), yielding a splitting
	\begin{equation}\label{eq:SplittingRightInverseF'}
		\underline{\mathcal{X}}(C)\times \underline{W}_\phi^{k,q}(C)
		= {\ker\underline{\mathcal{F}}'}_\phi^{k,q}(C)\oplus \underline{F}_\phi^{k-1,q}(C).
	\end{equation}
	Consequently, \(0\) is a regular value of \(\underline{\mathcal{F}}_\phi^{k,q}(C)\).
	The kernel \(\ker{\underline{\mathcal{F}}'}^{k,q}_\phi(C)\) is isomorphic to
	\begin{equation}
		\ker D_\phi\otimes{\left(\cO_C\right)}_0 \oplus \left(\ker \Red{\Dirac}^{1,0}\oplus \mathcal{X}\right)\otimes {\left(\cO_C\right)}_1.
	\end{equation}
	The splitting~\eqref{eq:SplittingRightInverseF'} is functorial in \(C\).
\end{prop}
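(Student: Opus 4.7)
The plan is to construct a right inverse for $\underline{\mathcal{F}}'^{k,q}_\phi(C)$ by solving the four component equations in Proposition~\ref{prop:DifferentialOfF} (in the Sobolev-completed form) in a specific order, using that $\varphi_C = \phi \times \id_C$ so the operators $D_{\varphi_C}$ and $\Dirac^{1,0}$ are just the $\cO_C$-linear extensions of $D_\phi$ and $\Red{\Dirac}^{1,0}$ to tensor products with $\cO_C$. In particular, if $R_1$ is a right inverse of $D_\phi$ and $R_2$ is a right inverse of $\Red{\Dirac}^{1,0}$ (which exist because these are surjective Fredholm operators), then $\id_{\cO_C} \otimes R_1$ and $\id_{\cO_C} \otimes R_2$ are right inverses of $D_{\varphi_C}$ and $\Dirac^{1,0}$, respectively. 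The right-inverse depends on $C$ only through the tensor factor $\cO_C$, so functoriality in $C$ is automatic.

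Given $(t, u, v, w) \in \underline{F}^{k-1,q}_\phi(C)$, I would first read off $\sigma = 4u$ from the second component and $\zeta^{0,1} = t$ from the first; then I would set $\xi = -R_1 v$ to solve the third equation $-D_{\varphi_C}\xi = v$; finally I would set $\rho = 0$ and solve the fourth equation for $\zeta^{1,0}$ via
\begin{equation*}
\zeta^{1,0} = R_2\Bigl(-w - \tfrac12\ACI\gamma^k\otimes(\nabla_k\targetACI)\zeta^{0,1} + \tfrac12\Tr_g\bigl\langle(\gamma\otimes\id)\zeta^{0,1},\targetACI\varphi_C^*\nabla\targetACI\bigr\rangle\differential\varphi_C\Bigr),
\end{equation*}
which makes sense since $\zeta^{0,1} = t$ has already been fixed. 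This produces a bounded linear right inverse $\underline{F}^{k-1,q}_\phi(C) \to \underline{\mathcal{X}}(C) \times \underline{W}^{k,q}_\phi(C)$, which proves surjectivity of ${\underline{\mathcal{F}}'}^{k,q}_\phi(C)$ (i.e.\ that $0$ is a regular value) and induces the asserted splitting.

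To identify the kernel, set the right-hand side to zero: this forces $\sigma = 0$, $\zeta^{0,1} = 0$, $D_{\varphi_C}\xi = 0$, and $\Dirac^{1,0}\zeta^{1,0} = 2\langle\vee Q\rho,\differential{\varphi_C}\rangle$ (the correction terms in $\hat\Dirac_{\varphi_C}$ all involve $\zeta^{0,1}$ and vanish). Since $\varphi_C = \phi \times \id_C$, the first condition becomes $\xi \in \ker D_\phi \otimes (\cO_C)_0$. For the last equation, given any $\rho \in \mathcal{X}\otimes(\cO_C)_1$ a particular solution is $2(\id\otimes R_2)\langle\vee Q\rho,\differential\varphi\rangle$, and the full solution set is this particular solution plus $\ker\Red{\Dirac}^{1,0}\otimes(\cO_C)_1$. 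This yields the $\cO_C$-linear isomorphism
\begin{equation*}
\ker{\underline{\mathcal{F}}'}^{k,q}_\phi(C) \;\cong\; \ker D_\phi \otimes (\cO_C)_0 \,\oplus\, \bigl(\ker\Red{\Dirac}^{1,0}\oplus\mathcal{X}\bigr) \otimes (\cO_C)_1,
\end{equation*}
and the same formula works in families, giving the required functoriality in $C$.

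The step likely to require the most care is the bookkeeping for $\zeta^{1,0}$: one must verify that the correction terms involving $(\nabla_k\targetACI)\zeta^{0,1}$ and $\varphi_C^*\nabla\targetACI$ land in $W^{k-1,q}{(\dual{\Red{S}}\otimes\phi^*\tangent{N})}^{0,1}\otimes(\cO_C)_1$ so that $R_2$ can be applied, and that the resulting section indeed lives in the right Sobolev class; given the choice $k - \frac2q > 1$ this follows from the multiplication property of $W^{k,q}$ times $W^{k-1,q}$ and the smoothness of $\targetACI$ and $\varphi_C$, so the argument is routine but must be checked to conclude boundedness of the right inverse.
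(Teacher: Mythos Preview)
Your proposal is correct and follows essentially the same approach as the paper: pick right inverses \(R_1\), \(R_2\) of the surjective Fredholm operators \(D_\phi\) and \(\Red{\Dirac}^{1,0}\), extend \(\cO_C\)-linearly, solve the four component equations in the obvious order with \(\rho=0\), and then read off the kernel as the three summands you describe. The paper's proof is nearly identical line for line, including the explicit formula for \(\zeta^{1,0}\) and the decomposition of the kernel into \(\ker D_\phi\otimes(\cO_C)_0\), \(\ker\Red{\Dirac}^{1,0}\otimes(\cO_C)_1\), and the graph of \(\rho\mapsto 2R_{2,C}\langle\vee Q\rho,\differential\varphi_C\rangle\) over \(\underline{\mathcal{X}}(C)\).
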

\begin{proof}
	We pick right-inverses of \(R_1\) of \(D_\phi\) and \(R_2\) of \(\Red{\Dirac}^{1,0}\).
	As \(D_\phi\) and \(\Red{\Dirac}^{1,0}\) are surjective, also \({\underline{\mathcal{F}}'}_\phi^{k,q}(C)\) is surjective by Proposition~\ref{prop:TransversalityIsAnalysis}.
	We have to construct a linear operator
	\begin{equation}
		\underline{R}(C)\colon \underline{F}_\phi^{k-1,q}(C)\to \underline{\mathcal{X}}(C)\times\underline{W}_\phi^{k,q}(C)
	\end{equation}
	such that for any \((t, u, v, w)\in\underline{F}_\phi^{k-1,q}(C)\) the image \((\rho, \xi, \zeta, \sigma)=R(C)(t, u, v, w)\) satisfies
	\begin{equation}
		{\underline{\mathcal{F}}'}_\phi^{k,q}(C)(\rho, \xi, \zeta, \sigma) = (t, u, v, w).
	\end{equation}
	As \(D_{\varphi_C}=D_\phi\otimes\id_{\cO_C}\) and \(\Dirac^{1,0}=\Red{\Dirac}^{1,0}\otimes\id_{\cO_C}\) we obtain right-inverses \(R_{1,C}=R_1\otimes\id_{\cO_C}\) of \(D_{\varphi_C}\) and \(R_{2,C}=R_2\otimes\id_{\cO_C}\) of \(\Dirac^{1,0}\).
	We can read of a solution \((\rho, \xi, \zeta, \sigma)\) from the explicit formulas in Equation~\eqref{eq:ComponentsF'}:
	\begin{align}
		\rho &= 0, \\
		\xi &= - R_{1,C} v, \\
		\zeta^{0,1} &= t, \\
		\zeta^{1,0} &= -R_{2,C}\left(w + \frac12\left(\ACI \gamma^k\otimes\left(\nabla_k\targetACI\right)t + \Tr_g\left<\left(\gamma\otimes\id_{\varphi_C^*\tangent{N}}\right)t, \targetACI\varphi_C^*\nabla\targetACI\right>\differential{\varphi_C}\right)\right), \\
		\sigma &= 4u.
	\end{align}
	This defines a linear right inverse to \({\underline{\mathcal{F}}'}_\phi^{k,q}(C)\) in a functorial manner.

	From the formulas~\eqref{eq:ComponentsF'} one reads off that the kernel of \({\underline{\mathcal{F}}'}^{k,q}_\phi(C)\) is the direct sum of the linear spaces
	\begin{align}
		&\Set{(0, \xi, 0, 0)\in \underline{\mathcal{X}}(C)\times\underline{W}_\phi^{k,q}(C)\given \xi\in \ker D_{\varphi_C}}, \\
		&\Set{(0, 0, \zeta, 0)\in \underline{\mathcal{X}}(C)\times\underline{W}_\phi^{k,q}(C)\given \zeta = 0\oplus \zeta^{1,0}\text{ s.t. }\zeta^{1,0}\in\ker\Dirac^{1,0}}, \\
		&\Set{(\rho, 0, \zeta, 0)\in \underline{\mathcal{X}}(C)\times\underline{W}_\phi^{k,q}(C)\given \zeta=0\oplus\zeta^{1,0}\text{ s.t. }\zeta^{1,0}=2R_{2,C}\left<\vee Q\rho, \differential\varphi_C\right>}.
	\end{align}
	Again because \(D_{\varphi_C}=D_\phi\otimes\id_{\cO_C}\) and \(\Dirac^{1,0}=\Red{\Dirac}^{1,0}\otimes\id_{\cO_C}\), the first one is isomorphic to \(\ker D_\phi\otimes{\left(\cO_C\right)}_0\), the second one to \(\ker\Red{\Dirac}^{1,0}\otimes{\left(\cO_C\right)}_1\) and the third is isomorphic to~\(\underline{\mathcal{X}}(C)\).
	This shows the claim.
\end{proof}

The right-inverse to \({\underline{\mathcal{F}}'}_\phi^{k,q}\) allows us to apply the implicit function theorem to the smooth map \(\underline{\mathcal{F}}_\phi^{k,q}\colon \underline{\mathcal{X}}\times \underline{U}_\phi^{k,q}\to \underline{F}_\phi^{k-1,q}\) between Banach superdomains.
Hence, close to zero the domain of \(\underline{\mathcal{F}}_\phi^{k,q}\) can be written as a product of kernel and image of \(\underline{\mathcal{F}}_\phi^{k,q}\).

\begin{prop}\label{prop:ImplicitFunctionTheorem}
There exist open sets \(U_0\subset\underline{U}_\phi^{k,q}(\R^{0|0})\), \(K_0\subset\underline{\R^{\ind D_\phi|\ind \Red{\Dirac}^{1,0}}}(\R^{0|0})\) and \(F_0\subset\underline{F}_\phi^{k-1,q}(\R^{0|0})\) such that we have a smooth isomorphism
	\begin{equation}
		\underline{\mathcal{X}} \times \underline{U}^{k,q}_\phi|_{U_0}
		= \underline{\mathcal{X}} \times \underline{\R^{\ind D_\phi|\ind \Red{\Dirac}^{1,0}}}|_{K_0}\times \underline{F}_\phi^{k-1,q}|_{F_0}.
	\end{equation}
	With respect to this product, for all \((\chi, \mathfrak{k}, \mathfrak{f})\in \underline{\mathcal{X}}(C)\times \underline{U}_\phi^{k,q}(C)\) it holds \(\underline{\mathcal{F}}_\phi^{k,q}(\chi, \mathfrak{k}, \mathfrak{f}) = \mathfrak{f}\).
	In particular, for all \((\chi, \mathfrak{k}, \mathfrak{f})\in\underline{\mathcal{X}}(C)\times\underline{U}_\phi^{k,q}(C)\) the equality \(\DJBar\exp_{\Phi_{C,\chi}} (\chi, \mathfrak{k}, \mathfrak{f})=0\) is equivalent to \(\mathfrak{f}=0\).
\end{prop}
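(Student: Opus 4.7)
The plan is to deduce this from an inverse function theorem for supersmooth maps between Banach superdomains, applied to a straightening of $\underline{\mathcal{F}}^{k,q}_\phi$ built from the splitting of Proposition~\ref{prop:FkqCHasRightInverse}. Fix once and for all right-inverses $R_1$ of $D_\phi$ and $R_2$ of $\Red{\Dirac}^{1,0}$; these induce a continuous $C$-natural projection $\underline{\pi}_K$ onto the kernel $\underline{K} = \ker\underline{\mathcal{F}}'^{k,q}_\phi$ together with the direct sum decomposition $\underline{\mathcal{X}}\times\underline{W}^{k,q}_\phi = \underline{K}\oplus\underline{F}^{k-1,q}_\phi$. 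By that same proposition, $\underline{K}\cong \underline{\mathcal{X}}\times\underline{\R}^{\ind D_\phi|\ind\Red{\Dirac}^{1,0}}$, where one uses that $\ker D_\phi$ and $\ker\Red{\Dirac}^{1,0}$ are finite-dimensional of the stated dimensions because these operators are surjective Fredholm.

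I then introduce the supersmooth functor morphism
\begin{equation}
\underline{\Psi}\colon \underline{\mathcal{X}}\times \underline{U}^{k,q}_\phi \longrightarrow \underline{K}\times\underline{F}^{k-1,q}_\phi,\qquad (\chi,X)\longmapsto \bigl(\underline{\pi}_K(\chi,X),\,\underline{\mathcal{F}}^{k,q}_\phi(\chi,X)\bigr).
\end{equation}
Its differential at the origin splits along $\underline{K}\oplus\underline{F}^{k-1,q}_\phi$ as $\underline{\pi}_K$ on the first component and as $(0,\underline{\mathcal{F}}'^{k,q}_\phi)$ on the second; by the definition of $\underline{K}$ and by the right-inverse property of $R_1, R_2$, the resulting block matrix is the identity. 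Hence $d\underline{\Psi}|_0$ is a topological linear isomorphism of super Banach spaces, natural in $C$. Applying the inverse function theorem in the Molotkov--Sachse framework yields open neighbourhoods $U_0, K_0, F_0$ on which $\underline{\Psi}$ is a superdiffeomorphism; composing its inverse with the tautological factorisation produces exactly the isomorphism asserted. In these straightened coordinates $\underline{\mathcal{F}}^{k,q}_\phi$ becomes the projection onto $\underline{F}^{k-1,q}_\phi$, giving $\underline{\mathcal{F}}^{k,q}_\phi(\chi,\mathfrak{k},\mathfrak{f})=\mathfrak{f}$.

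The last equivalence then follows because $\underline{\mathcal{F}}^{k,q}_\phi$ is by construction the local description of the section $\underline{\mathcal{S}}$ in the trivialisation $\underline{e}_{\mathcal{X},\phi}$, which is fibrewise a linear isomorphism; in particular $\underline{\mathcal{F}}^{k,q}_\phi(\chi,X)=0$ iff $\DJBar\exp_{\Phi_{C,\chi}}X=0$, and transferring through the superdiffeomorphism $\underline{\Psi}^{-1}$ gives the $\mathfrak{f}=0$ criterion.

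The main delicate point is the inverse function theorem itself in the Molotkov--Sachse categorical framework. However, this reduces to a $C$-indexed family of classical Banach-space inverse function theorems: for each $C \in \cat{SPoint}$ one obtains a smooth local inverse of $\underline{\Psi}(C)$, and uniqueness of these inverses together with the $(\cO_C)_0$-linearity of $d\underline{\Psi}|_0$ propagates naturally through the morphisms of $\cat{SPoint}$, so the family assembles into a supersmooth functor morphism. Concerns about loss of regularity are confined to this proposition's Sobolev setting; the promotion to smooth charts for $\underline{\mathcal{M}(\mathcal{X},A)}$ afterwards is handled by elliptic regularity of $\DJBar\Phi=0$ and is treated separately.
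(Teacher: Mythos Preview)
Your proof is correct and follows essentially the same strategy as the paper: use the splitting of Proposition~\ref{prop:FkqCHasRightInverse} together with an implicit/inverse function theorem in the Molotkov--Sachse setting. The paper applies the implicit function theorem twice---first to straighten \(\underline{\mathcal{F}}_\phi^{k,q}\) into a projection onto \(\underline{F}_\phi^{k-1,q}\), obtaining a kernel factor of dimension \(\ind D_\phi \mid \ind\Red{\Dirac}^{1,0}+\dim\mathcal{X}\), and then a second time to split off \(\underline{\mathcal{X}}\) from that kernel factor using surjectivity of the projection \(\ker\underline{\mathcal{F}}'^{k,q}_\phi\to\underline{\mathcal{X}}\). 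Your single application of the inverse function theorem to \(\underline{\Psi}=(\underline{\pi}_K,\underline{\mathcal{F}}_\phi^{k,q})\) accomplishes both steps at once, because \(\underline{\pi}_K=\id-R\circ\underline{\mathcal{F}}'^{k,q}_\phi\) preserves the \(\rho\)-component (the right inverse \(R\) has \(\rho=0\)), so the \(\underline{\mathcal{X}}\)-factor is already visible in \(\underline{\Psi}\). This is slightly more economical; the paper's two-step version makes the fibration over \(\underline{\mathcal{X}}\) more explicitly visible as a separate argument, which is convenient when it is later used to restrict to fixed \(\chi\).
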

\begin{proof}
	We apply the implicit function theorem for Banach supermanifolds, see~\cite[Proposition~6.3.2]{M-IDCSM}, twice.
	First, there exist open sets \(\tilde{U}_0\subset\underline{U}_\phi^{k,q}(\R^{0|0})\), \(\tilde{F}_0\subset\underline{F}_\phi^{k-1,q}(\R^{0|0})\), \(\tilde{K}_0\subset\underline{\R^{\ind D_\phi|\ind \Red{\Dirac}^{1,0} + \dim \mathcal{X}}}(\R^{0|0})\) and a smooth isomorphism
	\begin{equation}
		\underline{\tilde{f}}\colon \underline{\mathcal{X}} \times \underline{U}^{k,q}_\phi|_{\tilde{U}_0}
		\to \underline{\R^{\ind D_\phi|\ind \Red{\Dirac}^{1,0}+\dim \mathcal{X}}}|_{\tilde{K}_0}\times \underline{F}_\phi^{k-1,q}|_{\tilde{F}_0},
	\end{equation}
	such that the map \(\underline{\mathcal{F}}_\phi^{k,q}\) is given by projection on the second factor.
	The tangent space of \(\underline{\R^{\ind D_\phi|\ind\Red{\Dirac}^{1,0}+\dim\mathcal{X}}}|_{\tilde{K}_0}\) at zero is given by \(\ker{\underline{\mathcal{F}}'}_\phi^{k,q}\), that is elements \((\rho, \xi, \zeta, \sigma)\in \underline{\mathcal{X}}(C)\times \underline{W}_\phi^{k,q}\) such that
	\begin{align}
		\sigma &= 0, &
		\zeta^{0,1} &= 0, \\
		D_\phi \xi &= 0, &
		\Dirac^{1,0}\zeta^{1,0} - 2\left<\vee Q\rho, \differential{\varphi}_C\right> &= 0.
	\end{align}
	The differential of the projection \(\underline{\mathcal{X}}\times\underline{U}_\phi^{k,q}\to \underline{\mathcal{X}}\) at zero is given by \((\rho, \xi, \zeta, \sigma)\mapsto \rho\).
	As \(\Red{\Dirac}^{1,0}\) and hence \(\Dirac^{1,0}\) are surjective, the projection \(\ker{\underline{\mathcal{F}}'}_\phi^{k,q}\to\underline{\mathcal{X}}\) is also surjective.
	Hence we can apply the implicit function theorem again to split of \(\underline{\mathcal{X}}\) from \(\underline{\R^{\ind D_\phi|\ind\Red{\Dirac}^{1,0}+\dim\mathcal{X}}}|_{\tilde{K}_0}\).
	That is, there are open subsets \(U_0\subset\tilde{U}_0\), \(K_0\subset \tilde{K}_0\) and \(F_0\subset\tilde{F}_0\) and a smooth isomorphism
	\begin{equation}
		\underline{f}\colon \underline{\mathcal{X}} \times \underline{U}^{k,q}_\phi|_{U_0}
		\to \underline{\mathcal{X}} \times \underline{\R^{\ind D_\phi|\ind \Red{\Dirac}^{1,0}}}|_{K_0}\times \underline{F}_\phi^{k-1,q}|_{F_0}.
	\end{equation}
	such that the map \(\underline{\mathcal{F}}_\phi^{k,q}\) is given by projection on the last factor.
\end{proof}

Let us denote the superdomain \(\underline{\R^{\ind D_\phi|\ind\Red{\Dirac}^{1,0}}}|_{K_0}\) constructed in Proposition~\ref{prop:ImplicitFunctionTheorem} by \(\underline{K}_\phi\).
The superdomain \(\underline{\mathcal{X}}\times\underline{K}_\phi\) is immersed in \(\underline{\mathcal{X}}\times\underline{U}_\phi^{k,q}\) and represents the preimage \({\left(\underline{\mathcal{F}}_\phi^{k,q}\right)}^{-1}(0)\).
We now show that its elements are smooth.

\begin{prop}
	Let \((\chi, \xi, \zeta, \sigma)\in\underline{\mathcal{X}}(C)\times \underline{U}_\phi^{k,q}(C)\) such that \(\underline{\mathcal{F}}_\phi^{k,q}(C)(\chi, \xi, \zeta, \sigma)=0\).
	Then \((\xi, \zeta, \sigma)\) is smooth.
	Consequently, the embedding \(\underline{\mathcal{X}}\times\underline{K}_\phi\hookrightarrow\underline{\mathcal{X}}\times\underline{U}_\phi^{k,q}\) constructed in Proposition~\ref{prop:ImplicitFunctionTheorem} factors over \(\underline{U}_\phi\).
\end{prop}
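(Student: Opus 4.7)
The plan is to translate the abstract equation $\underline{\mathcal{F}}_\phi^{k,q}(C)(\chi, \xi, \zeta, \sigma) = 0$ into the system of component field equations given by Corollary~\ref{cor:SJCComponentEquations} and then apply standard elliptic bootstrap. By construction, $\underline{\mathcal{F}}_\phi^{k,q}(C)(\chi, \xi, \zeta, \sigma) = 0$ is equivalent to $\DJBar\Phi = 0$, where $\Phi = \exp_{\Phi_{C,\chi}} Y$ and $Y \in \VSec{\Phi_{C,\chi}^*\tangent{N}}$ is the vector field whose component fields are $(\xi, \zeta, \sigma)$. Because $\exp$ on $N$ and the parallel transport $P^{\nablabar}_{\exp_{\Phi_{C,\chi}} t Y}$ are fiberwise smooth maps, the component fields $(\varphi, \psi, F)$ of $\Phi$ lie in the same Sobolev classes $W^{k,q}$ as $(\xi, \zeta, \sigma)$ respectively (this uses the Sobolev multiplication/composition theorems valid since we assumed $k - 2/q > 1$, so $W^{k,q}$ is a Banach algebra embedding into $C^1$). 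Hence it suffices to show that the component fields $(\varphi, \psi, F)$ are smooth.

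Corollary~\ref{cor:SJCComponentEquations} decouples the system into algebraic and differential parts. First, $F = 0$ directly, so $F$ is smooth. Second, the algebraic condition $(1 + \ACI \otimes \targetACI)\psi = 0$ forces $\psi \in \VSec{\dual{S}\otimes_{\C}\varphi^*\tangent{N}}$, a condition that will be preserved under bootstrap. The two remaining equations are
\begin{align}
    \DelJBar\varphi &= -\left<Q\chi, \psi\right> - \tfrac14 \Tr_{\dual g_S}(\gamma \otimes \mathfrak{j} \targetACI)\psi, \\
    \Dirac \psi &= 2\left<\vee Q\chi, \differential{\varphi}\right> - \norm{Q\chi}^2 \psi + \tfrac13 SR^N(\psi),
\end{align}
where $\DelJBar$ and $\Dirac$ are the classical (real-linear) Cauchy--Riemann and twisted Dirac operators on $\Red{M}$ (extended $\cO_C$-linearly), both of which are elliptic first-order operators with smooth coefficients. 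Note $\mathfrak{j} = \langle \psi, \varphi^*\nabla\targetACI\rangle$ and $\chi \in \underline{\mathcal{X}}(C)$ is smooth by hypothesis.

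The core of the argument is now a standard bootstrap. Assuming $(\varphi, \psi) \in W^{k,q}$, the right-hand side of the first equation is a polynomial expression in $\chi$, $\psi$, and smooth functions of $\varphi$ (namely $\varphi^*\nabla\targetACI$ pulled back along $\varphi$), hence lies in $W^{k,q}$ by the Sobolev algebra property. The same holds for the second equation, where the cubic term $SR^N(\psi)$ involves $\varphi^* R^N$. Local elliptic regularity for $\DelJBar$ (applied componentwise, since being a real-linear Cauchy--Riemann operator it gains one derivative, see~\cite[Appendix~B]{McDS-JHCST}) yields $\varphi \in W^{k+1,q}$; similarly elliptic regularity for the Dirac operator $\Dirac$ gives $\psi \in W^{k+1,q}$. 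Iterating this argument, one obtains $(\varphi, \psi) \in W^{k',q}$ for every $k' \geq k$, hence $(\varphi, \psi) \in C^\infty$ by the Sobolev embedding theorem.

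Consequently $(\xi, \zeta, \sigma)$ is smooth, and the embedding $\underline{\mathcal{X}} \times \underline{K}_\phi \hookrightarrow \underline{\mathcal{X}} \times \underline{U}_\phi^{k,q}$ from Proposition~\ref{prop:ImplicitFunctionTheorem} factors through $\underline{U}_\phi$. The main technical point to check carefully is that the Sobolev composition estimates apply uniformly in the family parameters on $C$, which follows from the decomposition into finitely many coefficients $\tensor[_{\underline{\kappa}}]{(\xi, \zeta, \sigma)}{}$ and the fact that the nonlinear operations act polynomially in these coefficients with smooth coefficient functions. No new difficulty arises from the supergeometric setting beyond this bookkeeping.
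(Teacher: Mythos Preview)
Your proof is correct and follows essentially the same strategy as the paper: reduce \(\underline{\mathcal{F}}_\phi^{k,q}=0\) to the component field equations of Corollary~\ref{cor:SJCComponentEquations} via the exponential map, then bootstrap using elliptic regularity for \(\DelJBar\) and \(\Dirac\). The only organizational difference is that the paper carries out the bootstrap by expanding in the generators \(\lambda^{\underline{\sigma}}\) of \(\cO_C\) and inducting on degree \(|\underline{\sigma}|\), which has the advantage that at each step the equation for the top-degree coefficient is \emph{linear} with smooth coefficients (determined by the already-smooth lower-degree data), so only linear elliptic regularity is needed after the base case \(\Red{\tilde\varphi}\); your direct bootstrap on the Sobolev index \(k\) instead treats the full quasilinear system at once, which is fine but requires the regularity theory for \(\DelJBar\) with a nonzero \(W^{k,q}\) right-hand side and for \(\Dirac\) with \(W^{k,q}\) coefficients.
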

\begin{proof}
	As \((\xi, \zeta, \sigma)\in\underline{U}_\phi^{k,q}(C)\) with \(k-\frac2q>1\), we obtain by Sobolev embedding theorem that all coefficients of \((\xi, \zeta, \sigma)\) are at least once continuously differentiable.
	Let us denote the component fields of \(\exp_{\Phi_{C,\chi}}(\xi, \zeta, \sigma)\) by \(\tilde{\varphi}\), \(\tilde{\psi}\) and \(\tilde{F}\).
	All coefficients in \(\lambda\)-expansions of the components are likewise at least \(C^1\) because \(\exp_{\Phi_{C,\chi}}\) is smooth.
	The component fields \(\tilde{\varphi}\), \(\tilde{\psi}\) and \(\tilde{F}\) satisfy
	\begin{equation}\label{eq:EqComponentsTilde}
		\begin{aligned}
			0 &= \left(1+\ACI\otimes\targetACI\right)\tilde{\psi}, \\
			0 &= \tilde{F}, \\
			0 &= \frac12\left(1 + \ACI\otimes\targetACI\right)\left(\differential{\tilde{\varphi}} + \left<\chi, \tilde{\psi}\right> +\frac14\Tr_{\dual{g}_S}\left(\gamma\otimes\tilde{\mathfrak{j}}\targetACI\right)\tilde{\psi}\right), \\
			0 &= \frac12\left(1 + \ACI\otimes\targetACI\right)\left(\Dirac\tilde{\psi} - 2\left<\vee Q\chi,\differential{\tilde{\varphi}}\right> - \frac13SR^N(\tilde{\psi})\right).
		\end{aligned}
	\end{equation}
	The components \(\tilde{\psi}^{0,1}=0\) and \(F=0\) are obviously smooth.
	The reduced map \(\Red{\tilde{\varphi}}\) satisfies \(\DelJBar\Red{\tilde{\varphi}}=0\) and hence is also smooth, see, for example,~\cite[Appendix~B.4]{McDS-JHCST}.
	To show that the nilpotent part of \(\tilde{\varphi}\) and \(\tilde{\psi}\) are also smooth, we restrict to coordinate neighbourhoods on \(\Red{M}\) with coordinates \(x^1, x^2\) and \(y^a\) on \(N\).
	The expansion of~\(\tilde{\varphi}\) and~\(\tilde{\psi}\) in terms of the generators \(\lambda^\sigma\) of \(\cO_C\) is then given by
	\begin{align}
		\tilde{\varphi}^\# y^a &= \sum_{\underline{\sigma}\text{ even}} \lambda^{\underline{\sigma}}\tensor[_{\underline{\sigma}}]{\tilde{\varphi}}{^a}(x) &
		\tilde{\psi} = \sum_{\underline{\sigma}\text{ odd}} \lambda^{\underline{\sigma}}\tensor[_{\underline{\sigma}}]{\tilde{\psi}}{}
	\end{align}
	where \(\tensor[_{\underline{\sigma}}]{\tilde{\varphi}}{^a}(x)\in C^1(\R^2)\) and \(\tensor[_{\underline{\sigma}}]{\tilde{\psi}}{}\) are \(C^1\)-sectinos of \(\Red{S}\otimes\Red{\tilde{\varphi}}^*\tangent{N}\).
	The coefficients of \(\lambda^{\underline{\sigma}}\) in the differential equations in~\eqref{eq:EqComponentsTilde} are of the form
	\begin{align}
		\frac12\left(1 + \ACI\otimes\targetACI\right)\d\tensor[_{\underline{\sigma}}]{\tilde{\varphi}}{}
		&= \tilde{r}_1(\tensor[_{\underline{\nu}}]{\tilde{\varphi}}{}, \tensor[_{\underline{\nu}}]{\tilde{\psi}}{}; \abs{\underline{\nu}}<\abs{\underline{\sigma}}), \\
		\Red{\Dirac}^{1,0}\tensor[_{\underline{\sigma}}]{\tilde{\psi}}{}
		&= \tilde{r}_2(\tensor[_{\underline{\nu}}]{\tilde{\varphi}}{}, \tensor[_{\underline{\nu}}]{\tilde{\psi}}{}; \abs{\underline{\nu}}<\abs{\underline{\sigma}}),
	\end{align}
	where the right hand side depends smoothly only on the terms \(\tensor[_{\underline{\nu}}]{\tilde{\varphi}}{}\) and \(\tensor[_{\underline{\nu}}]{\tilde{\psi}}{}\) of degree lower than \(\abs{\underline{\sigma}}\).
	As the differential operator on the left-hand side is elliptic, the functions \(\tensor[_{\underline{\sigma}}]{\tilde{\varphi}}{}\) and \(\tensor[_{\underline{\sigma}}]{\tilde{\psi}}{}\) are smooth if the right hand side is.
	Consequently, one can conclude inductively that all \(\tensor[_{\underline{\sigma}}]{\tilde{\varphi}}{}\) and \(\tensor[_{\underline{\sigma}}]{\tilde{\psi}}{}\) are smooth.
	As the map \(\exp_{\Phi_{C,\chi}}\) is smooth and invertible also \((\xi, \zeta, \sigma)\) is smooth.
\end{proof}

The superdomain \(\underline{\mathcal{X}}\times \underline{K}_\phi\) forms a local coordinate patch for the moduli space \(\underline{\mathcal{M}(\mathcal{X}, A)}\) around \(\phi\).
For any \(\phi'= \exp_\phi Y\) for \(Y\in\underline{K}_\phi(\R^{0|0})\), the change of coordinates from \(\underline{\mathcal{X}}\times\underline{K}_\phi\) to \(\underline{\mathcal{X}}\times\underline{K}_{\phi'}\) is smooth as it is given by a composition of the exponential map and its inverse at a different point.
Consequently, in this section we have proven the following main theorem:

\MainThm

By definition, a \(C\)-point of \(\underline{\mathcal{M}(\mathcal{X}, A)}\) is a pair \((\chi, \Phi)\) consisting of a gravitino \(\chi\in\underline{\mathcal{X}}(C)\) and a super \(\targetACI\)-holomorphic curve \(\Phi\colon M_\chi\to N\) such that \([\im \Red{\Phi}]=A\in H_2(N)\).
Here \(M_\chi\) is the super Riemann surface determined by fixed \(\Red{g}\) and \(\Red{S}\) as well as \(\chi\) which can vary in \(\underline{\mathcal{X}}\).

In order to obtain the moduli space of super \(\targetACI\)-holomorphic curves for a fixed super Riemann surface \(M\) determined by a fixed triple \((\Red{g}, \Red{S}, \chi)\) we have to take the preimage of \(\chi\) under the projection \(\underline{\pi}_{\mathcal{X}}\colon\underline{\mathcal{M}(\mathcal{X}, A)}\to \underline{\mathcal{X}}\).
This is done as follows:
For \(\chi\in\underline{\mathcal{X}}(B)\) define
\begin{equation}
	\begin{split}
		\underline{\mathcal{M}(\chi, A)}\colon \cat{SPoint}^{op}_B&\to \cat{Set} \\
		\left(b\colon C\to B\right) &\mapsto \Set{(b^*\chi, \Phi)\in \underline{\mathcal{M}(\mathcal{X}, A)}(C)}
	\end{split}
\end{equation}
The charts \(\underline{\mathcal{X}}\times \underline{K}_\phi\) of \(\underline{\mathcal{M}(\mathcal{X}, A)}\) restrict to charts \(\Set{\chi}\times \underline{K}_\phi\) of \(\underline{\mathcal{M}(\chi, A)}\) and the change of charts \(\underline{K}_\phi\to\underline{K}_{\phi'}\) is a smooth map between superdomains depending on \(\chi\), that is, relative to \(B\).
Hence, the charts \(\Set{\underline{K}_\phi}\) equip \(\underline{\mathcal{M}(\mathcal{X}, A)}\) with the structure of a supermanifold over \(B\) of relative dimension
\begin{equation}
	2n(1-p) + 2\left<c_1(\tangent{N}), A\right>|2\left<c_1(\tangent{N}), A\right>.
\end{equation}

Given that we already allowed the gravitino to vary in a finite-dimensional superdomain it would be a natural generalization to consider families of super Riemann surfaces where all \((g, S, \chi)\) are allowed to vary and the corresponding super \(\targetACI\)-holomorphic curves.
The resulting moduli space would be a fiber bundle over the moduli space of super Riemann surfaces where the fibers are given by the moduli spaces constructed here.
We will postpone this until a theory of the moduli space of super Riemann surfaces in terms of \((g, S, \chi)\) is available.
However, notice that the only closed compact super Riemann surface of genus zero is \(\ProjectiveSpace[\C]{1|1}\), see~\cites{CR-SRSUTT}[Section~9.4]{EK-SGSRSSCA}.
In this case and for \(\mathcal{X}=\Set{0}\), the moduli space \(\underline{\mathcal{M}(\mathcal{X}, A)}\) is the moduli space of all super \(\targetACI\)-holomorphic curves of genus zero.
In analogy to the theory of \(\targetACI\)-holomorphic curves we hope to construct a compactification of the moduli space of super \(\targetACI\)-holomorphic curves and to obtain geometric invariants from the compactified moduli space.

\subsection{Surjectivity of \texorpdfstring{\(\Red{\Dirac}^{1,0}\)}{Dred10psi} via Bochner method}\label{Sec:BochnersMethod}
In this section we apply the Bochner method to obtain that under suitable positivity assumptions on the curvature tensor \(R^N\) on \(\phi^*\tangent{N}\) the Dirac-operator \(\Red{\Dirac}^{1,0}\) is surjective.
Combining those surjectivity results with the transversality theory for \(\targetACI\)-holomorphic curves we obtain examples of maps satisfying the transversality condition for super \(\targetACI\)-holomorphic curves as needed for Theorem~\ref{thm:MainThm}.

In order to fix notation and signs, let us recall Bochner's method for the twisted Dirac operator \(\Red{\Dirac}\).
For the Bochner method in the general setting see, for example,~\cite[Chapter~II.8]{LM-SG}.
In this section and in contrast to before, we use the positive definite metric \(\overline{g}_{\Red{S}}\) on \(\Red{S}\) induced by the \(\UGL(1)\)-structure of the spin structure.
With respect to this metric the operator \(\Red{\Dirac}\colon \Red{\dual{S}}\otimes\phi^*\tangent{N}\to \Red{S}\otimes\phi^*\tangent{N}\) is formally anti-self-adjoint.
That is, for all \(\psi\), \(\Psi\in\VSec{\dual{\Red{S}}\otimes\phi^*\tangent{N}}\)
\begin{equation}
	\int_{\Red{M}} \dual{\overline{g}_{\Red{S}}}\otimes \phi^*n\left(\Dirac\psi, \Psi\right)\d{vol}_g
	= -\int_{\Red{M}} \dual{\overline{g}_{\Red{S}}}\otimes \varphi^*n\left(\psi, \Dirac\Psi\right)\d{vol}_g
\end{equation}
It follows that the index of all Sobolev completions of \(\Red{\Dirac}\) is zero.
The Bochner identity is
\begin{equation}\label{eq:DiracBochnerIdentity}
	\begin{split}
		\Red{\Dirac}^2 \psi
		={}& \gamma(f^k)\nabla^{\dual{\Red{S}}\otimes\phi^*\tangent{N}}_{f_k}\left(\gamma(f^l)\nabla^{\dual{\Red{S}}\otimes\phi^*\tangent{N}}_{f_l}\psi\right) \\
		={}& \gamma(f^k)\gamma(\nabla^{\cotangent{\Red{M}}}_{f_k}f^l)\nabla^{\dual{\Red{S}}\otimes\phi^*\tangent{N}}_{f_l}\psi + \gamma(f^k)\gamma(f^l)\nabla^{\dual{\Red{S}}\otimes\phi^*\tangent{N}}_{f_k}\nabla^{\dual{\Red{S}}\otimes\phi^*\tangent{N}}_{f_l}\psi \\
		={}&  \delta^{kl}\left(\nabla^{\dual{\Red{S}}\otimes\phi^*\tangent{N}}_{f_k}\nabla^{\dual{\Red{S}}\otimes\phi^*\tangent{N}}_{f_l}\psi - \nabla^{\dual{\Red{S}}\otimes\phi^*\tangent{N}}_{\nabla^{\cotangent{\Smooth{M}}}_{f_k}f_l}\psi\right) \\
			& + \gamma^1\gamma^2\left(\nabla^{\dual{\Red{S}}\otimes\phi^*\tangent{N}}_{f^1}\nabla^{\dual{\Red{S}}\otimes\phi^*\tangent{N}}_{f_2}\psi - \nabla^{\dual{\Red{S}}\otimes\phi^*\tangent{N}}_{f^2}\nabla^{\dual{\Red{S}}\otimes\phi^*\tangent{N}}_{f_1}\psi \right.\\
			&- \left.f^l\left(\nabla^{\tangent{\Red{M}}}_{f_1}f_2 - \nabla^{\tangent{\Red{M}}}_{f_2}f_1\right)\nabla^{\dual{\Red{S}}\otimes\phi^*\tangent{N}}_{f_l}\psi\right) \\
		={}& \Tr_g \nabla^{\dual{\Red{S}}\otimes\phi^*\tangent{N}}\nabla^{\dual{\Red{S}}\otimes\phi^*\tangent{N}}\psi + \gamma^1\gamma^2R^{\dual{\Red{S}}\otimes\phi^*\tangent{N}}(f_1, f_2)\psi
	\end{split}
\end{equation}
If we assume that \(\psi\) is in the kernel of \(\Red{\Dirac}\), we have
\begin{equation}
	\begin{split}
		0
		={}& \int_{\Red{M}} \norm{\Red{\Dirac}\psi}^2 \d{vol}_g \\
		={}& -\int_{\Red{M}} \dual{\overline{g}_{\Red{S}}}\otimes \phi^*n\left(\Red{\Dirac}\Red{\Dirac}\psi, \psi\right) \d{vol}_g \\
		={}& -\int_{\Red{M}} \dual{\overline{g}_{\Red{S}}}\otimes\phi^*n\left(\Tr_g \nabla^{\dual{\Red{S}}\otimes\phi^*\tangent{N}}\nabla^{\dual{\Red{S}}\otimes\phi^*\tangent{N}}\psi, \psi\right) \\
			&\quad + \dual{\overline{g}_{\Red{S}}}\otimes\phi^*n\left(\gamma^1\gamma^2R^{\dual{\Red{S}}\otimes\phi^*\tangent{N}}(f_1, f_2)\psi, \psi\right) \d{vol}_g \\
		={}& \int_{\Red{M}} \norm{\nabla^{\dual{\Red{S}}\otimes\phi^*\tangent{N}}\psi}^2 - \dual{\overline{g}_{\Red{S}}}\otimes\phi^*n\left(\gamma^1\gamma^2R^{\dual{\Red{S}}\otimes\phi^*\tangent{N}}(f_1, f_2)\psi, \psi\right) \d{vol}_g.
	\end{split}
\end{equation}
In particular if the curvature term is everywhere non-negative and strictly positive at one point, we obtain a contradiction.
In that case, the twisted spinor \(\psi\) has to vanish; the Dirac operator \(\Red{\Dirac}\) has trivial kernel.

Let us now turn to the operator \(\Red{\Dirac}^{1,0}\).
For \(\psi\in\VSec{\dual{\Red{S}}\otimes_\C\varphi^*\tangent{N}}\) and \(\Psi\in\VSec{{\left(\dual{\Red{S}}\otimes\varphi^*\tangent{N}\right)}^{0,1}}\) we have
\begin{equation}
	\begin{split}
		\MoveEqLeft
		\int_{\Red{M}}\dual{\overline{g}_{\Red{S}}}\otimes \phi^*n\left(\Red{\Dirac}^{1,0}\psi, \Psi\right)\d{vol}_g \\
		&= \frac14\int_{\Red{M}}\dual{\overline{g}_{\Red{S}}}\otimes \phi^*n\left(\Red{\Dirac}\left(1-\ACI\otimes\targetACI\right)\psi, \left(1+\ACI\otimes\targetACI\right)\Psi\right)\d{vol}_g \\
		&= -\frac14\int_{\Red{M}}\dual{\overline{g}_{\Red{S}}}\otimes \phi^*n\left(\left(1-\ACI\otimes\targetACI\right)\psi, \Red{\Dirac}\left(1+\ACI\otimes\targetACI\right)\Psi\right)\d{vol}_g \\
		&= -\int_{\Red{M}}\dual{\overline{g}_{\Red{S}}}\otimes \phi^*n\left(\psi, \Red{\Dirac}^{0,1}\Psi\right)\d{vol}_g
	\end{split}
\end{equation}
That is, \(-\Red{\Dirac}^{0,1}\) is the formal adjoint of \(\Red{\Dirac}^{1,0}\) and hence,
\begin{equation}
	0
	= \ind \Red{\Dirac}
	= \ind \Red{\Dirac}^{1,0} + \ind\Red{\Dirac}^{0,1}.
\end{equation}
By definition of the operators \(\Red{\Dirac}^{1,0}\) and \(\Red{\Dirac}^{0,1}\), see also Equation~\eqref{eq:DiracOperatorsProjectors}, we read of that for \(\psi\in\VSec{\dual{S}\otimes_\C\varphi^*\tangent{N}}\) and \(\Psi\in\VSec{{\left(\dual{S}\otimes\varphi^*\tangent{N}\right)}^{0,1}}\) we have
\begin{align}
	\Dirac^{1,0}\psi &= \gamma(f^k)\overline{\nabla}_{f_k}\psi, &
	\Dirac^{0,1}\Psi &= \gamma(f^k)\overline{\nabla}_{f_k}\Psi.
\end{align}
Analogously to~\eqref{eq:DiracBochnerIdentity}, we obtain:
\begin{align}
	\Dirac^{1,0}\Dirac^{0,1}\Psi
	&= \Tr_g\overline{\nabla}^{\dual{\Red{S}}\otimes\phi^*\tangent{N}}\overline{\nabla}^{\dual{\Red{S}}\otimes\phi^*\tangent{N}}\Psi + \gamma^1\gamma^2\overline{R}^{\dual{\Red{S}}\otimes\phi^*\tangent{N}}(f_1, f_2)\Psi, \\
	\Dirac^{0,1}\Dirac^{1,0}\psi
	&= \Tr_g\overline{\nabla}^{\dual{\Red{S}}\otimes\phi^*\tangent{N}}\overline{\nabla}^{\dual{\Red{S}}\otimes\phi^*\tangent{N}}\psi + \gamma^1\gamma^2\overline{R}^{\dual{\Red{S}}\otimes\phi^*\tangent{N}}(f_1, f_2)\psi,
\end{align}
where \(\overline{R}^{\dual{\Red{S}}\otimes\phi^*\tangent{N}}\) is the curvature of \(\overline{\nabla}^{\dual{\Red{S}}\otimes\phi^*\tangent{N}}\).
For \(\Psi\in\ker\Red{\Dirac}^{0,1}\) we obtain
\begin{equation}
	\begin{split}
		0
		&= \int_{\Red{M}}\norm{\Red{\Dirac}^{0,1}\Psi}^2 \d{vol}_g \\
		&= \int_{\Red{M}} \norm{\overline{\nabla}^{\dual{\Red{S}}\otimes\phi^*\tangent{N}}\Psi}^2 - \dual{\overline{g}_{\Red{S}}}\otimes\phi^*n\left(\gamma^1\gamma^2\overline{R}^{\dual{\Red{S}}\otimes\phi^*\tangent{N}}(f_1, f_2)\Psi, \Psi\right) \d{vol}_g.
	\end{split}
\end{equation}
This proves the following proposition:
\begin{prop}
	\begin{enumerate}
		\item
			Assume that for any \(\Psi\in {\VSec{\dual{\Red{S}}\otimes\phi^*\tangent{N}}}^{0,1}\) the curvature expression
			\begin{equation}
				- \dual{\overline{g}_{\Red{S}}}\otimes\phi^*n\left(\gamma^1\gamma^2\overline{R}^{\dual{\Red{S}}\otimes\phi^*\tangent{N}}(f_1, f_2)\Psi, \Psi\right)
			\end{equation}
			is non-negative and strictly positive at one point.
			Then the kernel of \(\Red{\Dirac}^{0,1}\) consists only of the zero-section.
			Consequently, \(\Red{\Dirac}^{1,0}\) is surjective.
		\item
			Assume that for any \(\psi\in \VSec{\dual{\Red{S}}\otimes_C\phi^*\tangent{N}}\) the curvature expression
			\begin{equation}
				- \dual{\overline{g}_{\Red{S}}}\otimes\phi^*n\left(\gamma^1\gamma^2\overline{R}^{\dual{\Red{S}}\otimes\phi^*\tangent{N}}(f_1, f_2)\psi, \psi\right)
			\end{equation}
			is non-negative and strictly positive at one point.
			Then the kernel of \(\Red{\Dirac}^{1,0}\) consists only of the zero-section.
	\end{enumerate}
\end{prop}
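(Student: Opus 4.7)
The plan is essentially to package the computations that precede the statement into a clean application of the Bochner method, since almost all the analytic work has already been carried out in the paragraphs above. I would proceed in the order: parallel vanishing, index-theoretic passage to surjectivity, then repeat for the second part.

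For part (1), I would begin by assuming $\Psi\in\ker\Red{\Dirac}^{0,1}$ and invoking the Bochner identity already derived, namely
\begin{equation}
    0=\int_{\Red{M}}\norm{\Red{\Dirac}^{0,1}\Psi}^2\d{vol}_g=\int_{\Red{M}}\norm{\overline{\nabla}^{\dual{\Red{S}}\otimes\phi^*\tangent{N}}\Psi}^2-\dual{\overline{g}_{\Red{S}}}\otimes\phi^*n\bigl(\gamma^1\gamma^2\overline{R}^{\dual{\Red{S}}\otimes\phi^*\tangent{N}}(f_1,f_2)\Psi,\Psi\bigr)\d{vol}_g.
\end{equation}
Under the hypothesis the integrand is pointwise non-negative, so both summands vanish identically. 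In particular $\overline{\nabla}^{\dual{\Red{S}}\otimes\phi^*\tangent{N}}\Psi\equiv 0$, which means $\Psi$ is parallel for $\overline{\nabla}$; and at the distinguished point where the curvature pairing is strictly positive, the vanishing of the curvature integrand forces $\Psi=0$. A parallel section that vanishes at one point of the connected Riemann surface $\Red{M}$ vanishes everywhere, so $\ker\Red{\Dirac}^{0,1}=0$.

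To derive surjectivity of $\Red{\Dirac}^{1,0}$ from this, I would use the two facts established in the excerpt: that $-\Red{\Dirac}^{0,1}$ is the formal $L^2$-adjoint of $\Red{\Dirac}^{1,0}$, and that $\ind\Red{\Dirac}^{1,0}+\ind\Red{\Dirac}^{0,1}=\ind\Red{\Dirac}=0$. Passing to Sobolev completions $W^{k,q}\to W^{k-1,q}$ as in the previous subsection, the cokernel of the Fredholm operator $\Red{\Dirac}^{1,0}$ is identified with the $L^2$-orthogonal complement of its image, which by formal self-adjointness equals $\ker\Red{\Dirac}^{0,1}$ (smoothness of elements in this kernel follows from elliptic regularity, since $\Red{\Dirac}$ is elliptic). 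Hence $\ker\Red{\Dirac}^{0,1}=0$ forces $\Red{\Dirac}^{1,0}$ to be surjective.

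Part (2) is entirely analogous: start with $\psi\in\ker\Red{\Dirac}^{1,0}$, use the second Bochner identity $\Red{\Dirac}^{0,1}\Red{\Dirac}^{1,0}\psi=\Tr_g\overline{\nabla}\overline{\nabla}\psi+\gamma^1\gamma^2\overline{R}(f_1,f_2)\psi$ together with the same integration-by-parts manoeuvre to conclude $\overline{\nabla}\psi\equiv 0$ and pointwise vanishing at the curvature-positivity point, hence $\psi\equiv 0$. There is no real obstacle; the only technical point worth watching is that the Bochner formula must be applied with the almost-complex covariant derivative $\overline{\nabla}$ and its curvature $\overline{R}$ (rather than the Levi-Civita $\nabla$), so that it remains compatible with the splitting into $(1,0)$ and $(0,1)$ parts — but this is precisely the form in which the identity has already been written above.
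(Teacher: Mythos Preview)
Your proposal is correct and follows essentially the same approach as the paper: the paper's proof consists solely of the line ``This proves the following proposition'' placed immediately after the Bochner identity and the integral computation for \(\Psi\in\ker\Red{\Dirac}^{0,1}\), and your argument simply spells out the standard conclusion of that Bochner computation (non-negativity forces both integrands to vanish, hence \(\Psi\equiv 0\)) together with the passage from \(\ker\Red{\Dirac}^{0,1}=0\) to surjectivity of \(\Red{\Dirac}^{1,0}\) via the formal adjoint relation already established.
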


\begin{ex}[Maps from a surface with constant curvature to {\(\C^n\)}]\label{ex:SurfaceToCn}
	By Gauß--Bonnet Theorem, a closed surface with constant scalar curvature \(\mathfrak{s}\), has positive scalar curvature in the case of genus zero, vanishing scalar curvature in the case of genus one, and negative scalar curvature for genus two and higher.
	Up to a conformal rescaling we can assume that the scalar curvature is either \(\mathfrak{s}=+2\), \(\mathfrak{s}=0\) or \(\mathfrak{s}=-2\).
	Let us use the orthonormal frame \(f_a\) on \(\tangent{\Red{M}}\) and \(s_\alpha\) on \(S\).
	With respect to those frames, we have \(\gamma(f^1)\gamma(f^2)=\ACI\), the curvature tensor on \(\dual{S}\) is given by \(R^{\dual{S}}(f_1, f_2)=\frac14\mathfrak{s}\ACI\) and \(\Psi=s^\alpha\otimes \tensor[_\alpha]{\Psi}{}\).
	Then
	\begin{equation}
		- \dual{\overline{g}_{\Red{S}}}\otimes\phi^*n\left(\gamma^1\gamma^2\overline{R}^{\dual{\Red{S}}\otimes\phi^*\tangent{N}}(f_1, f_2)\Psi, \Psi\right)
		= \frac14\mathfrak{s}\norm{\Psi}^2.
	\end{equation}
	In the case of genus zero, that is \(\mathfrak{s}=2\), this expression is non-negative and strictly positive at any point where \(\Psi\) does not vanish.
	Consequently, both \(\Red{\Dirac}^{1,0}\) and \(\Red{\Dirac}^{0,1}\) are bijective for maps \(\ProjectiveSpace[\C]{1}\to \C^n\).

	In the case of maps \(\phi\colon \ProjectiveSpace[_\C]{1}\to\C^n\) the operator \(D_\phi\) coincides with several direct copies of \(\overline{\partial}\) and can be shown to be surjective via Dolbeault cohomology, see~\cite[Lemma~3.3.1]{McDS-JHCST}.
	Hence, by Theorem~\ref{thm:MainThm}, the moduli space of super \(\targetACI\)-holomorphic curves \(\Phi\colon \ProjectiveSpace[\C]{1|1}\to \C^n\) can be constructed and is a smooth supermanifold of real dimension \(2n|0\).
	As holomorphic maps from a compact surface to \(\C^n\) are constant, the moduli space of super \(\targetACI\)-holomorphic curves \(\Phi\colon \ProjectiveSpace[\C]{1|1}\to \C^n\) coincides with \(\C^n\).
\end{ex}

\begin{ex}[Maps from a surface with constant curvature to spaces with constant holomorphic sectional curvature]\label{ex:SurfaceToConstantHolomorphicBisectionalCurvature}
	Recall that a Kähler manifold is said to be of constant holomorphic sectional curvature \(\sigma\) if
	\begin{equation}
		\sigma(X) = n\left(R^N(X, \targetACI X) \targetACI X, X\right)
	\end{equation}
	is independent of the vector field \(X\) where \(\norm{X}=1\), see, for example~\cite{GK-HBC}.
	Examples of manifolds with constant holomorphic sectional curvature are \(\ProjectiveSpace[\C]{n}\) with the Fubini--Study metric having \(\sigma=4\), hyperbolic space with \(\sigma=-4\) and euclidean \(\C^n\) with \(\sigma=0\).
	In our notation the curvature tensor of a manifold with constant holomorphic curvature for \(X\), \(Y\), \(Z\), \(W\in\VSec{\tangent{N}}\) is given by
	\begin{equation}
		\begin{split}
			n(R^N(X, Y)Z, W)
			={}& \frac\sigma4\left(n(X, W)n(Y, Z) - n(X, Z)n(Y, W) + n(Z, \targetACI X)n(Y, \targetACI W)  \right.\\
			&-\left. n(X, \targetACI W)n(\targetACI Y, Z) - 2n(X, \targetACI Y)n(Z, \targetACI W)\right),
		\end{split}
	\end{equation}
	and hence, using the notation from Example~\ref{ex:SurfaceToCn}
	\begin{equation}
		\begin{split}
			\MoveEqLeftR
			- \dual{\overline{g}_{\Red{S}}}\otimes\phi^*n\left(\gamma^1\gamma^2\overline{R}^{\dual{\Red{S}}\otimes\phi^*\tangent{N}}(f_1, f_2)\Psi, \Psi\right) \\
			={}& \frac14\mathfrak{s}\norm{\Psi}^2 - \delta^{\alpha\beta}\phi^*n\left(R^N(\differential{\phi}(f_1), \differential{\phi}(f_2))\tensor{\ACI}{_\alpha^\kappa}\tensor[_\kappa]{\Psi}{}, \tensor[_\beta]{\Psi}{}\right) \\
			={}& \frac14\mathfrak{s}\norm{\Psi}^2 - \frac\sigma4\delta^{\alpha\beta}\left(\phi^*n(\differential{\phi}(f_1), \tensor[_\beta]{\Psi}{})\phi^*n(\differential{\phi}(f_2), \tensor{\ACI}{_\alpha^\kappa}\tensor[_\kappa]{\Psi}{}) \right.\\
				&- \phi^*n(\differential{\phi}(f_1), \tensor{\ACI}{_\alpha^\kappa}\tensor[_\kappa]{\Psi}{})\phi^*n(\differential{\phi}(f_2),\tensor[_\beta]{\Psi}{}) \\
				&+ \phi^*n(\tensor{\ACI}{_\alpha^\kappa}\tensor[_\kappa]{\Psi}{}, \targetACI\differential{\phi}(f_1))\phi^*n(\targetACI\differential{\phi}(f_2), \tensor[_\beta]{\Psi}{}) \\
				&- \phi^*n(\differential{\phi}(f_1), \targetACI\tensor[_\beta]{\psi}{})\phi^*n(\targetACI\differential{\phi}(f_2), \tensor{\ACI}{_\alpha^\kappa}\tensor[_\kappa]{\Psi}{}) \\
				&- \left.2\phi^*n(\differential{\phi}(f_1), \targetACI\differential{\phi}(f_2))\phi^*n(\tensor{\ACI}{_\alpha^\kappa}\tensor[_\kappa]{\Psi}{}, \targetACI\tensor[_\beta]{\Psi}{})\right) \\
			={}& \frac14\mathfrak{s}\norm{\Psi}^2 + \frac\sigma4\delta^{\alpha\beta}\delta^{kl}\phi^*n(\differential{\phi}(f_k), \tensor[_\alpha]{\Psi}{})\phi^*n(\differential{\phi}(f_l), \targetACI\tensor{\ACI}{_\beta^\kappa}\tensor[_\kappa]{\Psi}{}) \\
				&+ \frac\sigma4\delta^{\alpha\beta}\norm{\differential{\phi}}^2\phi^*n(\tensor[_\beta]{\Psi}{}, \targetACI\tensor{\ACI}{_\alpha^\kappa}\tensor[_\kappa]{\Psi}{}).
		\end{split}
	\end{equation}
	Consequently, for \(\left(1\pm\ACI\otimes\targetACI\right)\Psi=0\), we have
	\begin{equation}
		\begin{split}
			\MoveEqLeftR
			- \dual{\overline{g}_{\Red{S}}}\otimes\phi^*n\left(\gamma^1\gamma^2\overline{R}^{\dual{\Red{S}}\otimes\phi^*\tangent{N}}(f_1, f_2)\Psi, \Psi\right) \\
			={}& \frac14\mathfrak{s}\norm{\Psi}^2 \mp \frac\sigma4\left(\norm{\phi^*n(\differential{\phi}, \Psi)}^2 + \norm{\differential{\phi}}^2\norm{\Psi}^2\right).
		\end{split}
	\end{equation}
	This yields the following cases for \(\sigma>0\):
	\begin{itemize}
		\item\emph{genus \(p=0\)}: \(\Red{\Dirac}^{0,1}\) is injective \(\Red{\Dirac}^{1,0}\) is surjective for all \(\phi\); \(\Red{\Dirac}^{1,0}\) is injective, if \(\norm{\differential{\phi}}^2\leq\frac{\mathfrak{s}}{2\sigma}\) with strict inequality at one point.
		\item\emph{genus \(p=1\)}: \(\Red{\Dirac}^{0,1}\) is injective and \(\Red{\Dirac}^{1,0}\) is surjective as long as \(\differential{\phi}\neq0\) at one point.
		\item\emph{genus \(p>1\)}: \(\Red{\Dirac}^{0,1}\) is injective and \(\Red{\Dirac}^{1,0}\) is surjective if \(\norm{\differential{\phi}}^2\geq\frac{\mathfrak{s}}{2\sigma}\) with strict inequality at one point.
	\end{itemize}
	For \(\sigma<0\) the roles of \(\Red{\Dirac}^{1,0}\) and \(\Red{\Dirac}^{0,1}\) are reversed.

	It is shown in~\cite[Theorem~3.1.5]{McDS-JHCST} that \(D_\phi\) is surjective for all simple \(\phi\) and generic almost complex structure~\(\targetACI\).
	If one uses in the above enumeration the strict inequalities, the surjectivity of \(\Red{\Dirac}^{1,0}\) is preserved under small perturbations of \(\targetACI\).
	Consequently, for an almost complex structure~\(\targetACI'\) close to \(\targetACI\) the moduli space of simple super \(\targetACI'\)-holomorphic curves can be constructed.
\end{ex}
\begin{ex}[{Maps from \(\ProjectiveSpace[\C]{1}\) to \(\ProjectiveSpace[\C]{n}\)}]
	Every homology class \(A\in H_2(\ProjectiveSpace[\C]{n}, \Z)\) can be written as a product of \(k\in\Z\) and the Poincaré dual of the hyperplane class.
	One can check using the Euler sequence that \(\left<c_1(\tangent{\ProjectiveSpace[\C]{n}}), A\right>=k(n+1)\).
	The dimension formula for the super moduli space yields
	\begin{equation}
		2n + 2k(n+1)|2k(n+1).
	\end{equation}
	The bundle \(\phi^*\tangent{N}\) is globally generated by holomorphic sections and splits into a direct sum of line bundles whose first Chern number is non-negative.
	Hence, by~\cite[Lemma~3.3.1]{McDS-JHCST} the operator \(D_\phi\) is surjective.
	The previous Example~\ref{ex:SurfaceToConstantHolomorphicBisectionalCurvature} shows that \(\Red{\Dirac}^{1,0}\) is surjective for all maps \(\phi\colon \ProjectiveSpace[\C]{1}\to \ProjectiveSpace[\C]{n}\).
	Consequently, the moduli space of super \(\targetACI\)-holomorphic curves \(\ProjectiveSpace[\C]{1|1}\to \ProjectiveSpace[\C]{n}\) can be constructed using Theorem~\ref{thm:MainThm}.

	If we use the metric of constant scalar curvature \(\mathfrak{s}=2\) on \(\ProjectiveSpace[\C]{1}\) and the Fubini--Study metric on \(\ProjectiveSpace[\C]{n}\), that is \(\sigma=4\), we obtain in addition the following statement:
	If \(\norm{\differential{\phi}}^2\leq\frac14\), the operator \(\Red{\Dirac}^{1,0}\) is bijective and hence \([\im\phi]\) is the trivial homology class (\(k=0\)).
\end{ex}

\counterwithin{lemma}{section}
\counterwithin{equation}{section}
\appendix
\section{Appendix: Connections on the target}
In this appendix we prove a Lemma on symmetry properties of a contraction of the curvature tensor \(R^N\) of the target with coefficients of the twisted spinor \(\psi\) that has been used at different places in the main text.
\begin{lemma}\label{lemma:SRIdentities}
	For the pullback of the curvature of \(N\) along \(\varphi\) and the components of \(\psi=s^\mu\otimes\tensor[_\mu]{\psi}{}\) it holds
	\begin{equation}
		6R^N(\tensor[_\mu]{\psi}{}, \tensor[_\nu]{\psi}{})\tensor[_\sigma]{\psi}{}
		= 2\left(\Gamma_{\mu\nu}^t\tensor{\gamma}{_t_\sigma^\tau} - \delta_{\mu\nu}\tensor{\ACI}{_\sigma^\tau}\right)\tensor[_\tau]{{SR^N(\psi)}}{}
		= \left(\delta_{\nu\sigma}\tensor{\ACI}{_\nu^\tau} - \Gamma_{\nu\sigma}^t\tensor{\gamma}{_t_\mu^\tau}\right)\tensor[_\tau]{{SR^N(\psi)}}{}.
	\end{equation}
	Analogously, for the derivative of the curvature tensor it holds
	\begin{equation}
		\begin{split}
			6\left(\nabla_{\tensor[_\rho]{\psi}{}}R^N\right)\left(\tensor[_\mu]{\psi}{}, \tensor[_\nu]{\psi}{}\right)\tensor[_\sigma]{\psi}{}
			&= 2\left(\Gamma_{\mu\nu}^t\tensor{\gamma}{_t_\sigma^\tau} - \delta_{\mu\nu}\tensor{\ACI}{_\sigma^\tau}\right)\varepsilon^{\kappa\lambda}\left(\nabla_{\tensor[_\rho]{\psi}{}}R^N\right)\left(\tensor[_\tau]{\psi}{}, \tensor[_\kappa]{\psi}{}\right)\tensor[_\lambda]{\psi}{} \\
			&= \left(\delta_{\nu\sigma}\tensor{\ACI}{_\nu^\tau} - \Gamma_{\nu\sigma}^t\tensor{\gamma}{_t_\mu^\tau}\right)\varepsilon^{\kappa\lambda}\left(\nabla_{\tensor[_\rho]{\psi}{}}R^N\right)\left(\tensor[_\tau]{\psi}{}, \tensor[_\kappa]{\psi}{}\right)\tensor[_\lambda]{\psi}{}.
		\end{split}
	\end{equation}
\end{lemma}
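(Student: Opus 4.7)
The plan is to reduce the stated identity to a purely algebraic statement about a three-index tensor in the two-dimensional spinor space equipped with its Clifford structure, and then to solve that algebraic problem by a Fierz-type decomposition. Since every trilinear expression in the odd sections \(\tensor[_\mu]{\psi}{}\) lives in a finite-dimensional space of tensors with spinor indices running over \(\{3,4\}\), the identity is really just a matter of identifying enough symmetries of the tensor \(T_{\mu\nu\sigma}:=R^N(\tensor[_\mu]{\psi}{},\tensor[_\nu]{\psi}{})\tensor[_\sigma]{\psi}{}\) to pin it down uniquely from its contraction \(SR^N(\psi)_\tau=\varepsilon^{\kappa\lambda}T_{\tau\kappa\lambda}\).

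First I would establish two elementary symmetries of \(T_{\mu\nu\sigma}\). Expanding in a frame of \(\varphi^*\tangent{N}\), the coefficients \(\tensor[_\mu]{\psi}{^A}\) are odd functions on \(\Smooth{M}\), so swapping \(\mu\) and \(\nu\) produces one sign from the antisymmetry \(R^N(X,Y)Z=-R^N(Y,X)Z\) on \(N\) and one sign from the anticommutativity of the odd coefficients, yielding \(T_{\mu\nu\sigma}=T_{\nu\mu\sigma}\). The first Bianchi identity on \(N\), combined with the observation that rotating three odd coefficients cyclically amounts to two transpositions and therefore introduces no net sign, yields the cyclic relation \(T_{\mu\nu\sigma}+T_{\nu\sigma\mu}+T_{\sigma\mu\nu}=0\).

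Next I would invoke the two-dimensional spinor algebra. The space of \(\varphi^*\tangent{N}\)-valued tensors symmetric in the first two spinor indices is three-dimensional at each target direction and is spanned by \(\delta_{\mu\nu}\) together with the two traceless-symmetric tensors \(\Gamma_{\mu\nu}^{1}\) and \(\Gamma_{\mu\nu}^{2}\) (one checks \(\delta^{\mu\nu}\Gamma_{\mu\nu}^{t}=0\) from the explicit values of \(\Gamma\) recorded in Example~\ref{ex:SJCtoR2n}). So we obtain a unique decomposition
\begin{equation}
	T_{\mu\nu\sigma}=\delta_{\mu\nu}A_\sigma+\Gamma_{\mu\nu}^{t}B_{t\sigma},
\end{equation}
with coefficients \(A_\sigma, B_{t\sigma}\in\VSec{\varphi^*\tangent{N}}\). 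Contracting with \(\delta^{\mu\nu}\) extracts \(A_\sigma\), while contracting with a dual to \(\Gamma\) extracts \(B_{t\sigma}\); feeding these into \(SR^N(\psi)_\tau=\varepsilon^{\kappa\lambda}T_{\tau\kappa\lambda}\) and using the Bianchi relation of the previous step to express \(T\) with the free index in the last slot yields a square linear system for \(A\) and \(B\) in terms of \(SR^N(\psi)\). Solving it and identifying \(\Gamma_{\mu\nu}^{t}\gamma_{t\sigma}^{\tau}\) as the traceless-symmetric Fierz projector on \(S\otimes S\otimes S\) — with the trace part absorbed into \(\delta_{\mu\nu}\ACI_\sigma^\tau\) via \(\gamma^1\gamma^2=\pm\ACI\) — matches the coefficients in the statement. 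The second form of the identity is obtained by the analogous argument applied to the pair \((\nu,\sigma)\) in place of \((\mu,\nu)\), using the symmetry in the first two arguments of \(R^N\) to recast the role of the free index before extracting \(SR^N(\psi)\).

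Finally, the derivative version is immediate from the same scheme. The tensor \((\nabla_Z R^N)(X,Y)W\) retains the antisymmetry in \(X,Y\) (the covariant derivative acts on the tensor \(R^N\), not on its arguments), and differentiating the first Bianchi identity in the \(Z\)-direction gives the corresponding cyclic identity on \((\nabla_{\tensor[_\rho]{\psi}{}}R^N)(\tensor[_\mu]{\psi}{},\tensor[_\nu]{\psi}{})\tensor[_\sigma]{\psi}{}\); hence the decomposition argument runs verbatim. The main obstacle is the bookkeeping in the Fierz step: one must check that the sign conventions for \(\Gamma\), \(\gamma\), \(\ACI\) and the Clifford square \(\gamma^1\gamma^2\) are internally consistent with those fixed earlier in the paper, since the factor of \(\tfrac{1}{6}\) on the left is sensitive to exactly which irreducible summand of \(S\otimes S\otimes S\) survives after imposing both symmetry and Bianchi. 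Once the conventions are pinned down this reduces to a short finite verification in \(2\times2\) matrices.
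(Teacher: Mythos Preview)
Your proposal is correct and follows essentially the same approach as the paper: both arguments rest on the symmetry \(T_{\mu\nu\sigma}=T_{\nu\mu\sigma}\) coming from the oddness of the \(\tensor[_\mu]{\psi}{}\) together with the (super) first Bianchi identity, followed by a Fierz-type decomposition of the symmetric pair of spinor indices into \(\delta_{\mu\nu}\) and \(\Gamma_{\mu\nu}^t\). The only difference is packaging: where you set up and solve a small linear system abstractly, the paper simply enumerates the index combinations \(\mu,\nu,\sigma\in\{3,4\}\) and reads off the coefficients directly, which amounts to the ``short finite verification in \(2\times2\) matrices'' you defer at the end.
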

\begin{proof}
	We use the anti-symmetry
	\begin{equation}
		R^N(X, Y)=-{(-1)}^{\p{X}\p{Y}}R^N(Y,X)
	\end{equation}
	and the Bianchi identity of the curvature tensor \(R^N\)
	\begin{equation}
		R^N(X,Y)Z+{\left(-1\right)}^{\p{Z}(\p{X}+\p{Y})}R^N(Z,X)Y + {\left(-1\right)}^{\p{X}(\p{Y}+\p{Z})} R^N(Y,Z)X=0,
	\end{equation}
	for vector fields \(X\), \(Y\) and \(Z\in\VSec{\varphi^*\tangent{N}}\) of parity \(\p{X}\), \(\p{Y}\) and \(\p{Z}\).
	For the odd coefficients \(\tensor[_\alpha]{\psi}{}\), \(\alpha=3,4\) of \(\psi\) we have
	\begin{align}
		R^N\left(\tensor[_3]{\psi}{}, \tensor[_3]{\psi}{}\right)\tensor[_3]{\psi}{}
		&= R^N\left(\tensor[_4]{\psi}{}, \tensor[_4]{\psi}{}\right)\tensor[_4]{\psi}{}
		= 0, \\
		R^N\left(\tensor[_4]{\psi}{}, \tensor[_3]{\psi}{}\right)\tensor[_3]{\psi}{}
		&= R^N\left(\tensor[_3]{\psi}{}, \tensor[_4]{\psi}{}\right)\tensor[_3]{\psi}{}
		= -\frac12R^N\left(\tensor[_3]{\psi}{}, \tensor[_3]{\psi}{}\right)\tensor[_4]{\psi}{}, \\
		R^N\left(\tensor[_4]{\psi}{}, \tensor[_3]{\psi}{}\right)\tensor[_4]{\psi}{}
		&= R^N\left(\tensor[_3]{\psi}{}, \tensor[_4]{\psi}{}\right)\tensor[_4]{\psi}{}
		= -\frac12R^N\left(\tensor[_4]{\psi}{}, \tensor[_4]{\psi}{}\right)\tensor[_3]{\psi}{}.
	\end{align}
	It follows
	\begin{align}
		\begin{split}
			\tensor[_3]{{SR^N(\psi)}}{}
			&= \varepsilon^{\kappa\lambda}R^N(\tensor[_3]{\psi}{}, \tensor[_\kappa]{\psi}{})\tensor[_\lambda]{\psi}{}
			= R^N(\tensor[_3]{\psi}{}, \tensor[_3]{\psi}{})\tensor[_4]{\psi}{} - R^N(\tensor[_3]{\psi}{}, \tensor[_4]{\psi}{})\tensor[_3]{\psi}{} \\
			&= -3R\left(\tensor[_3]{\psi}{}, \tensor[_4]{\psi}{}\right)\tensor[_3]{\psi}{},
		\end{split} \\
		\begin{split}
			\tensor[_4]{{SR^N(\psi)}}{}
			&= \varepsilon^{\kappa\lambda}R^N(\tensor[_4]{\psi}{}, \tensor[_\kappa]{\psi}{})\tensor[_\lambda]{\psi}{}
			= R^N(\tensor[_4]{\psi}{}, \tensor[_3]{\psi}{})\tensor[_4]{\psi}{} - R^N(\tensor[_4]{\psi}{}, \tensor[_4]{\psi}{})\tensor[_3]{\psi}{} \\
			&= 3R\left(\tensor[_3]{\psi}{}, \tensor[_4]{\psi}{}\right)\tensor[_4]{\psi}{}.
		\end{split}
	\end{align}
	We now use a Fierz-type identity to obtain
	\begin{equation}\label{eq:PullbackCurvatureFierz}
			2R^N(\tensor[_\mu]{\psi}{}, \tensor[_\nu]{\psi}{})\tensor[_\sigma]{\psi}{}
			=\delta_{\mu\nu}\delta^{\kappa\lambda}R^N(\tensor[_\kappa]{\psi}{}, \tensor[_\lambda]{\psi}{})\tensor[_\sigma]{\psi}{} + \Gamma_{\mu\nu}^t\varepsilon^{\kappa\tau}\tensor{\gamma}{_t_\tau^\lambda}R^N(\tensor[_\kappa]{\psi}{}, \tensor[_\lambda]{\psi}{})\tensor[_\sigma]{\psi}{}.
	\end{equation}
	In order to further simplify the summands, we calculate
	\begin{align}
		\begin{split}
			\delta^{\kappa\lambda}R^N(\tensor[_\kappa]{\psi}{}, \tensor[_\lambda]{\psi}{})\tensor[_3]{\psi}{}
			&= R^N(\tensor[_3]{\psi}{}, \tensor[_3]{\psi}{})\tensor[_3]{\psi}{} + R^N(\tensor[_4]{\psi}{}, \tensor[_4]{\psi}{})\tensor[_3]{\psi}{} \\
			&= -2R^N(\tensor[_3]{\psi}{}, \tensor[_4]{\psi}{})\tensor[_4]{\psi}{}
			= -\frac23\tensor[_4]{{SR^N(\psi)}}{},
		\end{split}\\
		\begin{split}
			\delta^{\kappa\lambda}R^N(\tensor[_\kappa]{\psi}{}, \tensor[_\lambda]{\psi}{})\tensor[_4]{\psi}{}
			&= R^N(\tensor[_3]{\psi}{}, \tensor[_3]{\psi}{})\tensor[_4]{\psi}{} + R^N(\tensor[_4]{\psi}{}, \tensor[_4]{\psi}{})\tensor[_4]{\psi}{} \\
			&= -2R^N(\tensor[_3]{\psi}{}, \tensor[_4]{\psi}{})\tensor[_3]{\psi}{}
			= \frac23\tensor[_3]{{SR^N(\psi)}}{},
		\end{split}
	\end{align}
	and hence for the first summand in~\eqref{eq:PullbackCurvatureFierz}
	\begin{equation}
		\delta^{\kappa\lambda}R^N(\tensor[_\kappa]{\psi}{}, \tensor[_\lambda]{\psi}{})\tensor[_\sigma]{\psi}{}
		= -\frac23\tensor{\ACI}{_\sigma^\tau}\tensor[_\tau]{{SR^N(\psi)}}{}.
	\end{equation}
	For the second summand verify
	\begin{align}
		\begin{split}
			\varepsilon^{\kappa\tau}\tensor{\gamma}{_1_\tau^\lambda}R^N(\tensor[_\kappa]{\psi}{}, \tensor[_\lambda]{\psi}{})\tensor[_3]{\psi}{}
			&= R^N(\tensor[_3]{\psi}{}, \tensor[_3]{\psi}{})\tensor[_3]{\psi}{} - R^N(\tensor[_4]{\psi}{}, \tensor[_4]{\psi}{})\tensor[_3]{\psi}{} \\
			&= 2R^N(\tensor[_3]{\psi}{}, \tensor[_4]{\psi}{})\tensor[_4]{\psi}{}
			= \frac23\tensor[_4]{{SR^N(\psi)}}{},
		\end{split}\\
		\begin{split}
			\varepsilon^{\kappa\tau}\tensor{\gamma}{_1_\tau^\lambda}R^N(\tensor[_\kappa]{\psi}{}, \tensor[_\lambda]{\psi}{})\tensor[_4]{\psi}{}
			&= R^N(\tensor[_3]{\psi}{}, \tensor[_3]{\psi}{})\tensor[_4]{\psi}{} - R^N(\tensor[_4]{\psi}{}, \tensor[_4]{\psi}{})\tensor[_4]{\psi}{} \\
			&= -2R^N(\tensor[_3]{\psi}{}, \tensor[_4]{\psi}{})\tensor[_3]{\psi}{}
			= \frac23\tensor[_3]{{SR^N(\psi)}}{},
		\end{split}\\
		\begin{split}
			\varepsilon^{\kappa\tau}\tensor{\gamma}{_2_\tau^\lambda}R^N(\tensor[_\kappa]{\psi}{}, \tensor[_\lambda]{\psi}{})\tensor[_3]{\psi}{}
			&= R^N(\tensor[_3]{\psi}{}, \tensor[_4]{\psi}{})\tensor[_3]{\psi}{} + R^N(\tensor[_4]{\psi}{}, \tensor[_3]{\psi}{})\tensor[_3]{\psi}{} \\
			&= 2R^N(\tensor[_3]{\psi}{}, \tensor[_4]{\psi}{})\tensor[_3]{\psi}{}
			= -\frac23\tensor[_3]{{SR^N(\psi)}}{},
		\end{split}\\
		\begin{split}
			\varepsilon^{\kappa\tau}\tensor{\gamma}{_2_\tau^\lambda}R^N(\tensor[_\kappa]{\psi}{}, \tensor[_\lambda]{\psi}{})\tensor[_4]{\psi}{}
			&= R^N(\tensor[_3]{\psi}{}, \tensor[_4]{\psi}{})\tensor[_4]{\psi}{} + R^N(\tensor[_4]{\psi}{}, \tensor[_3]{\psi}{})\tensor[_4]{\psi}{} \\
			&= 2R^N(\tensor[_3]{\psi}{}, \tensor[_4]{\psi}{})\tensor[_4]{\psi}{}
			= \frac23\tensor[_4]{{SR^N(\psi)}}{},
		\end{split}
	\end{align}
	or shorter
	\begin{equation}
		\varepsilon^{\kappa\tau}\tensor{\gamma}{_t_\tau^\lambda}R^N(\tensor[_\kappa]{\psi}{}, \tensor[_\lambda]{\psi}{})\tensor[_\sigma]{\psi}{}
		= \frac23\tensor{\gamma}{_t_\sigma^\tau}\tensor[_\tau]{{SR^N(\psi)}}{}.
	\end{equation}
	This shows the first claim of the Lemma.
	The second equality can be derived analogously.
	First, the application of Fierz identity yields
	\begin{equation}
			2R^N(\tensor[_\mu]{\psi}{}, \tensor[_\nu]{\psi}{})\tensor[_\sigma]{\psi}{}
			=\delta_{\nu\sigma}\delta^{\kappa\lambda}R^N(\tensor[_\mu]{\psi}{}, \tensor[_\kappa]{\psi}{})\tensor[_\lambda]{\psi}{} + \Gamma_{\nu\sigma}^t\varepsilon^{\kappa\tau}\tensor{\gamma}{_t_\tau^\lambda}R^N(\tensor[_\mu]{\psi}{}, \tensor[_\kappa]{\psi}{})\tensor[_\lambda]{\psi}{}.
	\end{equation}
	Here for the first summand verify
	\begin{align}
		\delta^{\kappa\lambda}R^N(\tensor[_3]{\psi}{}, \tensor[_\kappa]{\psi}{})\tensor[_\lambda]{\psi}{}
		&= R^N(\tensor[_3]{\psi}{}, \tensor[_3]{\psi}{})\tensor[_3]{\psi}{} + R^N(\tensor[_3]{\psi}{}, \tensor[_4]{\psi}{})\tensor[_4]{\psi}{}
		= \frac13\tensor[_4]{{SR^N(\psi)}}{}, \\
		\delta^{\kappa\lambda}R^N(\tensor[_4]{\psi}{}, \tensor[_\kappa]{\psi}{})\tensor[_\lambda]{\psi}{}
		&= R^N(\tensor[_4]{\psi}{}, \tensor[_3]{\psi}{})\tensor[_3]{\psi}{} + R^N(\tensor[_4]{\psi}{}, \tensor[_4]{\psi}{})\tensor[_4]{\psi}{}
		= -\frac13\tensor[_3]{{SR^N(\psi)}}{},
	\end{align}
	that is,
	\begin{equation}
		\delta^{\kappa\lambda}R^N(\tensor[_\sigma]{\psi}{}, \tensor[_\kappa]{\psi}{})\tensor[_\lambda]{\psi}{}
		= \frac13\tensor{\ACI}{_\sigma^\tau}\tensor[_\tau]{{SR^N(\psi)}}{}.
	\end{equation}
	For the second summand,
	\begin{align}
		\begin{split}
			\varepsilon^{\kappa\tau}\tensor{\gamma}{_1_\tau^\lambda}R^N(\tensor[_3]{\psi}{}, \tensor[_\kappa]{\psi}{})\tensor[_\lambda]{\psi}{}
			&= R^N(\tensor[_3]{\psi}{}, \tensor[_3]{\psi}{})\tensor[_3]{\psi}{} - R^N(\tensor[_3]{\psi}{}, \tensor[_4]{\psi}{})\tensor[_4]{\psi}{} \\
			&= -R^N(\tensor[_3]{\psi}{}, \tensor[_4]{\psi}{})\tensor[_4]{\psi}{}
			= -\frac13\tensor[_4]{{SR^N(\psi)}}{},
		\end{split}\\
		\begin{split}
			\varepsilon^{\kappa\tau}\tensor{\gamma}{_1_\tau^\lambda}R^N(\tensor[_4]{\psi}{}, \tensor[_\kappa]{\psi}{})\tensor[_\lambda]{\psi}{}
			&= R^N(\tensor[_4]{\psi}{}, \tensor[_3]{\psi}{})\tensor[_3]{\psi}{} - R^N(\tensor[_4]{\psi}{}, \tensor[_4]{\psi}{})\tensor[_4]{\psi}{} \\
			&= R^N(\tensor[_3]{\psi}{}, \tensor[_4]{\psi}{})\tensor[_3]{\psi}{}
			= -\frac13\tensor[_3]{{SR^N(\psi)}}{},
		\end{split} \\
		\begin{split}
			\varepsilon^{\kappa\tau}\tensor{\gamma}{_2_\tau^\lambda}R^N(\tensor[_3]{\psi}{}, \tensor[_\kappa]{\psi}{})\tensor[_\lambda]{\psi}{}
			&= R^N(\tensor[_3]{\psi}{}, \tensor[_3]{\psi}{})\tensor[_4]{\psi}{} + R^N(\tensor[_3]{\psi}{}, \tensor[_4]{\psi}{})\tensor[_3]{\psi}{} \\
			&= -R^N(\tensor[_3]{\psi}{}, \tensor[_4]{\psi}{})\tensor[_3]{\psi}{}
			= \frac13\tensor[_3]{{SR^N(\psi)}}{},
		\end{split}\\
		\begin{split}
			\varepsilon^{\kappa\tau}\tensor{\gamma}{_2_\tau^\lambda}R^N(\tensor[_4]{\psi}{}, \tensor[_\kappa]{\psi}{})\tensor[_\lambda]{\psi}{}
			&= R^N(\tensor[_4]{\psi}{}, \tensor[_3]{\psi}{})\tensor[_4]{\psi}{} + R^N(\tensor[_4]{\psi}{}, \tensor[_4]{\psi}{})\tensor[_3]{\psi}{} \\
			&= R^N(\tensor[_3]{\psi}{}, \tensor[_4]{\psi}{})\tensor[_3]{\psi}{}
			= -\frac13\tensor[_4]{{SR^N(\psi)}}{},
		\end{split}
	\end{align}
	yields
	\begin{equation}
		\varepsilon^{\kappa\tau}\tensor{\gamma}{_t_\tau^\lambda}R^N(\tensor[_\sigma]{\psi}{}, \tensor[_\kappa]{\psi}{})\tensor[_\lambda]{\psi}{}
		= -\frac13\tensor{\gamma}{_t_\sigma^\tau}\tensor[_\tau]{{SR^N(\psi)}}{}.
	\end{equation}
	This shows the first claim.

	The identities for the derivatives of the curvature tensor follow analogously, because the tensor \(\nabla_{\tensor[_\rho]{\psi}{}}R^N\) has the same symmetry properties as \(R^N\).
\end{proof}

\printbibliography

\textsc{Enno Keßler\\
Center of Mathematical Sciences and Applications,
Harvard University,
20~Garden Street,
Cambridge, MA 02138,
USA}\\
\texttt{ek@cmsa.fas.harvard.edu} \\

\textsc{Artan Sheshmani\\
Center of Mathematical Sciences and Applications,
Harvard University,
20~Garden Street,
Cambridge, MA 02138,
USA\\[.5em]
Aarhus University,
Department of Mathematics, Ny Munkegade 118,
8000, Aarhus C,
Denmark\\[.5em]
National Research University
Higher School of Economics, Russian Federation,
Laboratory of Mirror Symmetry,
NRU HSE,
6 Usacheva Street,
Moscow, Russia, 119048}\\
\texttt{artan@cmsa.fas.harvard.edu} \\

\textsc{Shing-Tung Yau\\
Center of Mathematical Sciences and Applications,
Harvard University,
20~Garden Street,
Cambridge, MA 02138,
USA\\[.5em]
Department of Mathematics,
Harvard University,
Cambridge, MA 02138,
USA
}\\
\texttt{yau@math.harvard.edu}

\end{document}